\DeclareMathOperator{\rg}{\rm{rk}}
\newcommand{\abs}[1]{\left| #1 \right|} 
\DeclareMathOperator{\brau}{\rm{Br}}
\DeclareMathOperator{\brausub}{\brau_{\mathrm{sub}}}
\DeclareMathOperator{\brauvert}{\brau_{\mathrm{vert}}}
\newcommand{\pimp}[1]{#1_{\mathrm{odd}}}
\newcommand{\eN}{\mathbb{N}}
\newcommand{\eZ}{\mathbb{Z}}
\newcommand{\eQ}{\mathbb{Q}}
\newcommand{\eR}{\mathbb{R}}
\newcommand{\eC}{\mathbb{C}}
\newcommand{\eA}{\mathbb{A}}
\newcommand{\p}{\mathbb{P}}
\newcommand{\cN}{\mathcal{N}}
\newcommand{\cE}{\mathcal{E}}
\newcommand{\cM}{\mathcal{M}}
\newcommand{\cP}{\mathcal{P}}
\newcommand{\Mod}[1]{\ (\mathrm{mod}\ #1)}
\newtheorem{tho}{Theorem}[section]
\newtheorem{cor}[tho]{Corollary}
\newtheorem{lemme}[tho]{Lemma}
\newtheorem{defin}[tho]{Definition}
\newtheorem{prop}[tho]{Proposition}
\newtheorem{conj}[tho]{Conjecture}
\newtheorem{rmk}[tho]{Remark}
\title{Solubility of a family of conics with polynomial coefficients in many variables}
\author{ Mathieu Da Silva }
\begin{document}

\maketitle

\textbf{Abstract :} We study the proportion of conics given by $(\mathcal{C}_{\bm F, \bm y}) : F_0(\bm y)x_0^2 + F_1(\bm y)x_1^2 = F_2( \bm y)x_2^2 $ which have a rational point $\bm x = (x_0 :x_1:x_2) \in \p^2(\eQ)$, where $\bm y = (y_0 : \dots : y_n)\in \p^n(\eQ)$ and  $F_0,F_1,F_2 \in \eZ[X_0,\ldots, X_n]$ are homogeneous polynomials in many variables of the same degree $d$. We provide an asymptotic formula for the number of $\bm y$ of bounded height such that the corresponding conic $(\mathcal{C}_{\bm F, \bm y})$ has a rational point. In particular, our result agrees with the Loughran--Smeets and the Loughran--Rome--Sofos conjectures. Our strategy is based on a recent result of Destagnol--Lyczak--Sofos relying on the circle method to estimate the average of an arithmetic function over polynomials in many variables. To this end, we study the proportion of conics $t_0x_0^2 + t_1x_1^2 + t_2x_2^2 = 0$ having a rational point, and coefficients $t_0,t_1,t_2$ in arithmetic progressions.

\tableofcontents

\section*{Notation}
\begin{itemize}

 \item[$\bullet$] The letter $p$ is exclusively used for prime numbers. We write $\mathcal{P}$ for the set of primes. If~$n,m \in~\eN$, the notation $p^m \mid\mid n$ means that $p^m \mid n$ and $p^{m+1} \nmid n$. If $2^m \mid \mid n$, we denote by $\pimp{n}$ the integer $2^{-m} n$. 

    \item[$\bullet$] If ${\bm n} = (n_1,\ldots, n_k) \in \eZ^k$, we write $\abs{\bm n} := n_1 + \cdots + n_k$ and $\pimp{\bm n} := (\pimp{(n_1)}, \dots, \pimp{(n_k)})$.

    \item[$\bullet$] For $k \geqslant 2$, the notation $\log_k$ stands for the $k^{\text{th}}$ iterate of $\log $.
    
    \item[$\bullet$] If $f : \eN^k \to \eC$ is an arithmetic function, we write $f({\bm n})$ for $f(n_1, \ldots, n_k)$ and we denote its associated Dirichlet series by $D_f({\bm s}) = D_f(s_1,\ldots, s_k)$. We call $f$ multiplicative if it satisfies the equality $f(a_1 b_1, \dots, a_kb_k) = f(\bm a) f(\bm b)$ for all $\bm a, \bm b \in \eN^k$ such that $\gcd( a_1 \cdots a_k, b_1 \cdots b_k) = 1$.

    \item[$\bullet$] If $s$ is a complex number, we denote by $\sigma $ its real part.
    
    \item[$\bullet$] For $z \in \eC$ we denote by $\tau_z(n)$ the $n$-th coefficient in the Dirichlet series expansion of $\zeta(s)^z$ ($\sigma) > 1$). Its existence is justified for instance in \cite[chap. II. §5.1]{Tenenbaum}. If ${\bm z} = (z_1,\ldots, z_k) \in \eC^k$, we~introduce~$\boldsymbol{\tau}_{\bm z}({\bm n}):= \prod_{i = 1}^k \tau_{z_i}(n_i) $. The latter defines an arithmetic function in $k$~variables~which~is~multiplicative~and~generalises~the~$\tau_z$~function.\\

    \item[$\bullet$] The Gamma function is defined by $\Gamma(z) := \int_0^{+\infty} e^{-t} t^{z-1} \mathrm{d}t$ for $\Re(z) > 0$. We recall that it admits a meromorphic continuation on $\eC$ with simple poles at each non-positive integer.
    
    \item[$\bullet$] If $\ell_0,\ell_1,\ell_2$ (\textit{resp.} $m_{01}, m_{02},m_{12}$) are three integers, we set $\ell_{012} := \ell_0\ell_1\ell_2$ (\textit{resp.} $m_{012} := m_{01}m_{02}m_{12}$).\\
    
    \item[$\bullet$]  The notation $\eA_\eQ$ stands for the ring of adèles of $\eQ$, that is the set of sequences $(a_\infty, a_2, a_3 , \dots, a_p, \dots)$ where $a_\infty \in \eR$ and $a_p \in \eQ_p$ for all $p$ with $a_p \in \eZ_p$ for all but finitely many $p$.\\
    
    \item[$\bullet$] If $K$ is a field and if $\bm t = (t_0,t_1,t_2) \in K^3$, we set 
    \begin{equation*}\label{notation} \tag{0.1}\vartheta_K( \bm t) := \mathds{1}\left(t_0y_0^2 + t_1y_1^2 = t_2y_2^2 \text{ has a solution $\bm y \in K^3 \smallsetminus \{(0,0,0)\}$}\right).\end{equation*} If $p$ is a prime and $\bm t \in \eQ^3$, we simply write $\vartheta_p(\bm t)$ for $\vartheta_{\eQ_p}(\bm t)$. We denote by $(\cdot ,\cdot )_K$ the Hilbert symbol over $K$ (see \cite[chap. 3]{Serre2}). If $K = \eQ_p$, we prefer the notation $(\cdot , \cdot)_p$ instead of $(\cdot, \cdot )_{\eQ_p}$.
    \item[$\bullet$] In this paper, an algebraic variety is a $\eQ$-scheme which is reduced, separated and of finite type. If $X$ is an algebraic variety, we denote by $K_X$ its canonical divisor and by $\Lambda_{\mathrm{eff}}(X)$ its pseudo-effective cone, \textit{i.e.} the closure of the cone of effective divisors in $\mathrm{Pic}(X) \otimes_\eZ \eR$. For each point $x$ of a scheme $X$, we denote by $\kappa(x)$ its residue field. The set of codimension one points of $X$ is denoted $X^{(1)}$.
    \item[$\bullet$] Finally, we denote by $\mu_{\infty}$ the Lebesgue measure on $\eR^{n+1}$ and by $\mu_p$ the Haar measure on $\eQ_p^{n+1}$, normalised by $\mu_p(\eZ_p^{n+1} ) = 1$. 
\end{itemize}

\section{Introduction}\label{sec III1}

\paragraph{Motivation and first definitions.}
 Let $n \in \eN$ and $\bm x  \in \p^n(\eQ)$. We recall that the anticanonical height on~$\p^n(\eQ)$ is defined by 
\[ H({\bm x}) := \max(\abs{x_0}, \ldots, \abs{x_n})^{n+1},\] where $[x_0 : \cdots : x_n]$ is a representative of $\bm x$ with coprime coordinates. The integer $H(\bm x)$ does not depend on the choice of such a representative of $\bm x$.\\ 
   
Let $Y$ be a smooth and proper projective variety over $\eQ$ and $\pi~:~Y~\longrightarrow~\p^n$ a dominant morphism whose generic fibre is geometrically integral. The fibres of $\pi$ define a family of varieties. The problem we are interested in consists in estimating, when $B$ goes to $+ \infty$, the quantity  
\[ N_{\mathrm{loc}}(\pi, B) :=\# \left\{ \bm x \in \p^n(\eQ) : H(\bm x) \leqslant B, \bm x \in \pi( Y(\eA_\eQ))  \right\},\]which is a natural upper bound for the quantity
\[ N_{\text{glob}}(\pi,B) := \# \left\{ \bm x \in \p^n(\eQ) : H(\bm x) \leqslant B, \bm x \in \pi (Y(\eQ)) \right\}.\]

\paragraph{The Loughran--Smeets conjecture.}   In 2016, Loughran and Smeets \cite[th.~1.2 and 1.3]{LS} used the large sieve and the Ekedahl sieve to obtain an upper bound for $ N_{\mathrm{loc}}(\pi, B) $ (and hence for $N_{\mathrm{glob}}(\pi, B)$), which is valid for every dominant morphism $\pi : X \to \p^n$ whose generic fibre is geometrically integral, where $X$ is a smooth and proper projective variety over a number field. The Loughran--Smeets conjecture (see \cite[conj. 1.6]{LS}) states that this upper bound is sharp for $N_{\mathrm{loc}}(\pi, B)$ under some mild assumptions. More precisely, it predicts that 
\[ N_{\mathrm{loc}}(\pi, B) \underset{B \to + \infty}{\sim} c {B \over (\log B)^{\Delta(\pi)}} \]for some constant $c > 0 $ and $\Delta(\pi) \in \eQ^+$ defined by \cite[(1.3)]{LS}. This generalises a previous work of Serre \cite{Serre1} from 1990 where, using the large sieve as well, the author obtained a sharp upper bound for the quantity 

\[ N(B) := \# \left\{ {\bm t} \in \left[ -B,B \right]^3 : t_0 y_0^2 + t_1 y_1^2 = t_2y_2^2 \text{ has a solution $\bm y \in \eQ^3 \smallsetminus \{(0,0,0)\}$}\right\} \quad (B \geqslant 2).\]Note that here, studying the quantity $N(B)$ is equivalent to studying the quantity $N_{\mathrm{loc}}(\pi, B)$ in the case of the family described by   
\[ X : \quad  t_0 y_0^2 + t_1y_1^2 = t_2y_2^2 \quad \subseteq \p^2 \times \p^2, \]with $\pi : (\bm y, \bm t) \in Y \mapsto \bm t \in \p_\eQ^2$. Between 1990 and 2000 matching lower bounds for $N(B)$ were obtained independently by Guo \cite{Guo} and Hooley \cite{Hooley1,Hooley2}. In 2023, Loughran, Rome and Sofos promoted those bounds to an asymptotic by showing the estimate
\[\label{question serre} \tag{1.1} N(B) \underset{B\to + \infty}{\sim} c {B^3 \over (\log B)^{3/2}} \]~where $c$ is an explicit positive constant. They also gave a prediction for the shape of the constant $c$ in general.\\

We now recall some terminology in order to state conjecture \cite[3.8]{LRS}. For any weak Fano variety~$X$ (see \cite[def. 3.1]{LRS}), we denote by $\Lambda_{\mathrm{eff}}(X)^{\vee} \subseteq \mathrm{Pic}(X)^{\vee} $ the dual of the pseudo-effective cone, equipped with the Haar measure such that the dual lattice $\mathrm{Pic}(X)^{\vee}$ has covolume $1$. The purity exact sequence \cite[3.7.2]{CTS} yields
    \[ 0 \longrightarrow  \brau(X) \longrightarrow  \brau( \eQ(X)) \overset{\oplus \partial_D}{\longrightarrow } \bigoplus_{D \in X^{(1)}} H^1 (\eQ(D), \eQ / \eZ)\]where $\partial_D : \brau( \eQ(X)) \longrightarrow  H^1 (\eQ(D), \eQ / \eZ)$ is the residue map at $D$. If $b$ is such that $\partial_D(b) = 0$, one says that $b$ is unramified along $D$. Let $Y$ be a smooth and proper projective variety over a number field and $\pi : Y \longrightarrow X$ be a dominant morphism. The subordinate Brauer $\brausub (X,\pi)$ is defined by 
    \[ \brausub (X,\pi) := \bigcap_{D \in X^{(1)}} \left\{ b \in \brau \eQ(X) : \begin{tabular}{c} $\partial_E \; \pi^\ast b = 0 \text{ for all irreducible components }$ \\
    $E \subseteq \pi^{-1} (D) \text{ of multiplicity }1 $ \end{tabular}\right\}.\]Note that these two notions come from \cite{LRS}. We also need the following definitions.

\begin{defin}
  Let $K$ be a number field or a finite field, let $\overline{K}$ be its algebraic closure, and let $X$ be a finite type
scheme over $K$. The absolute Galois group $\mathrm{Gal}(\overline{K}/K)$ acts on the geometric irreducible
components of $X$, i.e. the irreducible components of $X \times_{K} \overline{K}$. We say that $X$ is split (resp. pseudo-split) if $\mathrm{Gal}(\overline{K}/ K)$ (resp. every element of $\mathrm{Gal}(\overline{K}/ K)$) fixes some geometric irreducible component of multiplicity 1.
\end{defin} 
\begin{defin}
 Let $\pi : Y \longrightarrow X$ be a dominant proper morphism of smooth varieties over $\eQ$. For each point $D$ of $X$, we denote by $\kappa(D)$ its residue field and we choose some finite group $\Gamma_D$ through which the absolute Galois group $\mathrm{Gal}( \overline{\kappa(D)}/ \kappa(D))$ acts on the geometric irreducible components of $\pi^{-1}(D)$. We define
\[ \delta_D(\pi) = { \# \{ \gamma \in \Gamma_D : \gamma \text{ fixes a geometric irreducible component
of } \pi^{-1}(D) \text{ of multiplicity 1} \} \over \#\Gamma_D}.\]
\end{defin}This quantity measures how "non-split" the fibre over $D$ is. In particular, if $\pi^{-1}(D)$ is split, then we have $\delta_D(\pi) = 1$. The converse does not hold in general as shown in \cite[rmk. 3.5]{DS}. However, we have that $\delta_D(\pi) = 1$ if and only if $\pi^{-1}(D)$ is pseudo-split. We are now ready to introduce   \begin{itemize}
    \item the normalised constant of the effective cone defined by $\theta(X) := \displaystyle \int_{v \in \Lambda_{\mathrm{eff}}(X)^{\vee}} \mathrm{e}^{- \langle -K_X,v \rangle } \mathrm{d}v$;
    \item the modified Fujita invariants $\eta(D):= \sup\{t >0 : -K_X - tD \in \Lambda_{\text{eff}}\}$, satisfying $\eta(D_i) = a_i$ if
    \[ -K_X = a_1D_1 + \cdots + a_rD_r,\]and where the $D_i$ are the generators of the effective cone of $X$;
     \item the modified Tamagawa measure denoted by $\tau_\pi \left( \left( \prod_p \pi ( U(\eQ_p ))\right)^{\brausub(\p^n, \pi)} \right) $, defined in~\cite[(3.5)]{LRS};
    \item the real number $\Gamma(X, \pi) = \prod_{D \in X^{(1)}} \Gamma(\delta_D(\pi))$;
    \item the invariant associated with the fibration $\pi$ defined by \cite[(1.3)]{LS} and equal to
\[\Delta( \pi) = \sum_{D \in X^{(1)}} (1 - \delta_D(\pi))\]by \cite[prop. 3.10]{LS}.
\end{itemize}

 In the case where the field is~$\eQ$ and where the base of the fibration $\pi$ is $\p^n$, the Loughran--Rome--Sofos conjecture can be written as follows.
 
 \begin{conj}\label{conj lrs} \cite[conj 3.8]{LRS}
 Let $Y$ be a smooth projective variety and $\pi : Y \longrightarrow \p^n$ a dominant morphism whose generic fibre is geometrically integral. Let $U_0$ be the open subset of~$\p^n$ obtained by removing the closure of all the closed points whose fibres by $\pi$ are non-split and let~$U:=~\pi^{-1}(U_0)$. Assume that \begin{enumerate}
     \item[$(i)$] the morphism $\pi$ has a smooth fibre above a rational point, which is everywhere locally soluble;
     \item[$(ii)$] the fibres above the codimension one points have an irreducible component of multiplicity one. 
 \end{enumerate} Then there exists a constant $c_{\mathrm{LRS}}(\pi) > 0$ such that, when $B$ goes to $+\infty$, we have 

\[ N_{\mathrm{loc}}(\pi, B) \sim c_{\mathrm{LRS}}(\pi) {B \over (\log B)^{\Delta(\pi)}} .\]The constant $c_{\mathrm{LRS}}(\pi)$ can be written using geometric invariants of $\p^n$ and $\pi$ as follows
\[ \label{cst conj} \tag{1.2}c_{\mathrm{LRS}}(\pi) = {\theta(\p^n) \cdot \# ( \brausub (\p^n, \pi) / \brau \eQ ) \cdot \tau_\pi \left( \left( \prod_p \pi ( U(\eQ_p ))\right)^{\brausub(\p^n, \pi)} \right) \over \Gamma(\p^n , \pi)} \prod_{D \in (\p^n)^{(1)}} \eta(D)^{1- \delta_D(\pi)}. \]
\end{conj}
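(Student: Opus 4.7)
The plan is to establish a matching lower bound for $N_{\mathrm{loc}}(\pi, B)$ to complement the Loughran--Smeets upper bound, together with an exact computation of the implied constant. The starting point is to write $N_{\mathrm{loc}}(\pi,B) = \sum_{\bm y : H(\bm y) \leqslant B} \varrho(\bm y)$ with $\varrho(\bm y) = \mathds{1}(\bm y \in \pi(X(\eA_\eQ)))$, factor $\varrho$ as a product of local indicators $\prod_v \varrho_v$, and modulate this product by characters coming from $\brausub(\p^n,\pi)/\brau \eQ$ to account for any Brauer--Manin obstruction. The goal is then to estimate the resulting sum over $\bm y$ of bounded height by a multi-variable analytic method.

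First, one must compute the local densities $\int \varrho_p\, d\mu_p$. For $p$ of good reduction, the density of $\bm y \pmod{p}$ for which the reduction of the fiber fails to be pseudo-split is governed by the monodromy action on the geometric components of the bad fibers; Chebotarev and Lang--Weil type arguments show that, above each $D \in (\p^n)^{(1)}$, this proportion is asymptotically $(1 - \delta_D(\pi))/p$. Summed over $D$ this gives $\int \varrho_p\, d\mu_p = 1 - \Delta(\pi)/p + O(p^{-2})$ in average, from which a multi-variable Selberg--Delange theorem applied to $\prod_p \varrho_p$ produces the logarithmic saving $(\log B)^{-\Delta(\pi)}$.

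The main obstacle is the lower bound. The Ekedahl--Loughran--Smeets sieve yields the upper bound but its error terms are too crude to be reversed directly. A viable strategy, mirroring that of Loughran--Rome--Sofos and of the present paper, is to handle the sum by harmonic analysis: restrict to the open locus $U_0$ of good fibers, parametrise $\bm y$ by integer tuples, and invoke a circle-method estimate in many variables in the spirit of Destagnol--Lyczak--Sofos to evaluate $\sum_{\bm y} \prod_p \varrho_p(\bm y)$. This requires producing an \emph{additive characterisation of local solubility} at each prime (for conics this is provided by Hilbert symbol identities), which in full generality is not available. It is this last point that makes the conjecture a genuinely open problem: a complete proof along these lines would require a new, geometric way of encoding local solubility for arbitrary fibrations, and at present the state of the art treats this conjecture only in cases with enough arithmetic structure, such as conic bundles, Ch\^atelet surfaces, and certain norm form equations.

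Finally, one must identify the resulting constant with the right-hand side of $(1.2)$. The Euler product of local integrals, suitably renormalised at each non-split $D$ by a factor $(1 - 1/p)^{1 - \delta_D(\pi)}$, should yield the modified Tamagawa measure $\tau_\pi$ together with the factor $\prod_D \eta(D)^{1 - \delta_D(\pi)}$ coming from the Fujita invariants. The archimedean volume integral recovers $\theta(\p^n)$, the Gamma factors $\Gamma(\p^n,\pi)$ are the standard residue Gamma factors from Selberg--Delange, and the multiplicity $\#(\brausub(\p^n,\pi)/\brau\eQ)$ emerges as the number of admissible characters summed in the Brauer--Manin expansion of $\varrho$. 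Verifying these identifications precisely, as is done for special families in the literature, is delicate but mechanical once the harmonic analytic estimate is in hand; the true bottleneck remains the lower bound itself.
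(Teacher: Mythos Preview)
The statement you were asked to prove is Conjecture~\ref{conj lrs}, and the paper does \emph{not} prove it: it is stated there precisely as an open conjecture (the Loughran--Smeets / Loughran--Rome--Sofos conjecture), and the paper's contribution is to verify it in the special case of the conic bundle $\pi_{\bm F}$ (Theorem~\ref{théorème principal}). There is therefore no ``paper's own proof'' to compare your proposal against.

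Your write-up is not a proof and, to your credit, you say so explicitly: you correctly identify the bottleneck as the absence, for general fibrations, of an additive or character-theoretic encoding of local solubility that would allow one to feed $\prod_p \varrho_p$ into a Selberg--Delange or circle-method machine. That diagnosis matches the structure of the paper's argument in the special case, where the Hilbert-symbol identity for conics (Proposition~\ref{réécriture de Nbms(X)}) is exactly the missing ingredient you describe. So your outline is a reasonable heuristic sketch of why the conjecture is plausible and why it remains open, but it should not be presented as a proof proposal; it is a commentary on the state of the art.
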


\begin{rmk}
    The open subset $U$ is necessary to have a well-defined modified Tamagawa measure. In some applications (see for instance \cite[lemma 4.7]{LRS}), the open $U$ does not play any role, since its complement has measure $0$. We refer to Remark \ref{rmq ouverts} for more details on the role of the open sets $U$ and $U_0$ in the study of the family considered in this paper.
\end{rmk}

\begin{rmk}
   By choosing $\pi$ to be the identity map $X \longrightarrow X$, we obtain that the Loughran--Smeets conjecture \cite[3.8]{LRS} generalises the Manin--Peyre conjecture \cite[2.2.1 and 2.2.2]{Peyre}.
\end{rmk}

A list with the cases already studied is given in \cite{LRS}. We extend this list to include more recent results in the following table. \\

\begin{tabular}{|c|c|c|}
    \hline Nature & Equation defining the family & Author(s) \\
     \hline
     Conic bundle & $y_0^2 - D y_1^2 = F_0(\bm t)\cdots F_R(\bm t) $ where the $F_i$ have& Destagnol--Lyczak--Sofos  \\  over $\p^n $ & the same degree $d$, form a Birch system, & (2025) \cite{DLS}   \\
    & $D \neq 0$ is square-free and $d R$ is even& \\
     \hline 
     Fibration over $\p^2$ & Fermat equations $x_0y_0^n + x_1y_1^n + x_2y_2^n =0$ & Koymans--Paterson\\
  &  & --Santens--Shute (2025) \cite{KPSS}  \\
     \hline
    Family of quadrics & $x_0 y_0^2 + x_1 y_1^2+x_2y_2^2 + x_3 y_3^2 = 0$ & Wilson (2024) \cite{Wilson1}  \\ parametrised by  & with $x_0x_1 = x_2x_3$ &   \\
  $\p^1 \times \p^1$ &  & \\
     \hline  
\end{tabular}
\\
\\
 Note that in Wilson's recent work \cite{Wilson1}, the base does not satisfy assumption \cite[(1.3)]{LRS} and the order of magnitude has an extra factor $\log_2 B$. This motivates the study of $N_{\text{loc}}(\pi,B)$ under assumptions that differ from those of \cite[conj. 3.8]{LRS}.\\

 Keeping in mind that we aim to apply the circle method, we now recall the definition of a Birch system. 

\begin{defin}(Birch system)\label{polynomes de birch}
    Let $n \geqslant  0$, and $R,d \geqslant 1$ be integers. Let $F_0,\ldots, F_R \in~\eZ[X_0,\ldots, X_n]$ be homogeneous polynomials of the same degree $d$. Let $\sigma(\bm F)$ be the dimension of the complex subvariety of~$\eA_\eC^{n+1}$ defined by 
    \[ \rg \left( {\partial F_i \over \partial t_j} (\bm t)\right)_{\substack{ 0 \leqslant i \leqslant R \\ 0 \leqslant j \leqslant n}}  < R+1. \]We say that $\bm F = (F_0, \dots, F_R)$ forms a Birch system if we have  \[ {n +1  - \sigma(\bm F) \over 2^{d-1}} > (R+1)(R+2)(d-1).\]
\end{defin}

\paragraph{Main result.} Let $F_0,F_1,F_2 \in \eZ[X_0,\ldots, X_n]$ be polynomials forming a Birch system. For $\bm y \in \p^2(\eQ)$ and for~$\bm x \in~\p^n(\eQ)$, the equation  
    \[ F_0(\bm x) y_0^2 + F_1(\bm x) y_1^2 = F_2(\bm x) y_2^2 \]defines a subvariety $Y$ of $V := \p^2 \times \p^n$. Denote by $\pi_{\bm F} : Y \longrightarrow \p^n$ the projection $(\bm y, \bm x) \mapsto \bm x$. In this paper, we establish Conjecture \ref{conj lrs} for the fibration $\pi_{\bm F}$. Let us note that the Hasse--Minkowski theorem implies the equality
    \[ N_{\mathrm{loc}} (\pi_{\bm F},B)  = \# \left\{ \bm x \in \p^n(\eQ) : H(\bm x) \leqslant B, \; \vartheta_\eQ(F_0(\bm x), F_1(\bm x) , F_2(\bm x)) = 1\right\} =: N_{\text{glob}}(\pi_{\bm F },B), \]where the notation $\vartheta_\eQ$ is introduced in (\ref{notation}).

\begin{tho}\label{THA}
    Let $d \geqslant 1$ and $F_0,F_1,F_2 \in \eZ[X_0, \ldots, X_n]$ be three homogeneous polynomials of degree~$d$, forming a Birch system as defined in Definition \ref{polynomes de birch}. Assume that for all $i \in \{0,1,2\}$, the variety cut out by $F_i = 0$ is smooth and that the variety cut out by $(F_0 = F_1 = F_2= 0)$ is a complete intersection. When~$B$ goes to $+ \infty$, we have
    \[ N_{\mathrm{glob}} (\pi_{\bm F},B) \sim c_{\mathrm{LRS}}(\pi_{\bm F} ){B \over (\log B)^{3/2}} \]where $c_{\mathrm{LRS}}(\pi_{\bm F})$ is the constant (\ref{cst conj}) predicted by \cite[conj. 3.8]{LRS}.
\end{tho}

This result is established in subsection \ref{sec III6}. It follows from Theorem \ref{problème de comptage} combined with \cite[th. 2.4]{DLS}, which itself relies on the circle method.\\

\noindent \textbf{Outline of the proof : } In section \ref{sec III2}, we compute the subordinate Brauer group (see \cite[def.~2.1]{LRS}) of the fibration $\pi_{\bm F}$. This enables us to determine the constant $c_{\mathrm{LRS}}(\pi_{\bm F})$ in Theorem \ref{THA} that is conjectured by Loughran--Rome--Sofos. In section~\ref{sec III3} we show a general lemma, namely Lemma~\ref{lemme TN}, aiming to simplify the proof of Theorem \ref{problème de comptage}. This lemma follows from an application of the Selberg--Delange method \cite[chap. II. §5.4]{Tenenbaum}. In sections \ref{sec III4} and \ref{sec III5} we estimate the proportion of projective conics which have a rational point and whose coefficients lie in arithmetic progression (Theorem \ref{problème de comptage}). Finally, in section~\ref{sec III6} we use a result from Destagnol--Lyczak--Sofos relying on the circle method, which we combine with Theorem \ref{problème de comptage} to conclude the proof of Theorem~\ref{THA}.

\section{Prediction for the constant}\label{sec III2}

Let $\bm F = (F_0,F_1,F_2) \in \eZ[X_0, \dots, X_n]^3$ satisfying the assumptions of Theorem \ref{THA}. For $\bm y \in \p^2(\eQ)$ and $\bm x \in \p^n(\eQ)$, the equation  
    \[ F_0(\bm x) y_0^2 + F_1(\bm x) y_1^2 = F_2(\bm x) y_2^2 \]defines a subvariety $Y$ of $V := \p^2 \times \p^n$. Recall that $\pi_{\bm F} : Y \longrightarrow \p^n$ denotes the projection map~$(\bm y, \bm x)~\mapsto~\bm x$.

\paragraph{The subordinate Brauer group of $\pi_{\bm F}$.} Following \cite[§2.]{LRS}, we compute the number of elements of~$\brausub(\p^n , \pi_{\bm F}) / \brau \eQ$ where
    \[ \brausub (\p^n,\pi_{\bm F}) := \bigcap_{D \in (\p^n)^{(1)}} \left\{ b \in \brau \eQ(\p^n) : \begin{tabular}{c} $\partial_E \; \pi_{\bm F}^\ast b = 0 \text{ for all irreducible components }$ \\
    $E \subseteq \pi_{\bm F}^{-1} (D) \text{ of multiplicity }1 $ \end{tabular}\right\}.\]

 \begin{prop}
        The variety $Y$ is smooth.
        \end{prop}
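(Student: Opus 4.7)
The plan is to apply the Jacobian criterion to the bihomogeneous polynomial
\[ F(\bm x, \bm y) := F_0(\bm y)x_0^2 + F_1(\bm y)x_1^2 - F_2(\bm y)x_2^2, \]
which cuts out $X$ as a divisor in the smooth ambient variety $\p^2 \times \p^n$. Passing to the biaffine cone $(\eA^3 \setminus \{0\}) \times (\eA^{n+1} \setminus \{0\})$, the variety $X$ is smooth at $(\bm x, \bm y)$ iff at least one partial derivative of $F$ is non-zero there. These partial derivatives read
\[ \partial_{x_i} F = \pm\, 2\, F_i(\bm y)\, x_i, \qquad \partial_{y_j} F = x_0^2\, \partial_{y_j} F_0 + x_1^2\, \partial_{y_j} F_1 - x_2^2\, \partial_{y_j} F_2.\]
I would assume for contradiction that $(\bm x, \bm y) \in X$ is a singular point and run a case analysis on the set $S := \{i : x_i \neq 0\}$.

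The vanishing of the $\bm x$-partials forces, for every $i \in \{0,1,2\}$, either $x_i = 0$ or $\bm y \in V_i$. Since $\bm x \neq 0$ the set $S$ is non-empty, and $\bm y \in V_i$ for every $i \in S$. If $|S| = 1$, say $S = \{i\}$, the $\bm y$-partials reduce to $x_i^2\, \nabla_{\bm y} F_i(\bm y) = 0$, forcing $\nabla F_i(\bm y) = 0$, which combined with $\bm y \in V_i$ contradicts the smoothness of $V_i$. When $|S| \ge 2$ the $\bm y$-partials yield instead a non-trivial linear dependence
\[ \sum_{i \in S} \varepsilon_i\, x_i^2\, \nabla_{\bm y} F_i(\bm y) = 0 \qquad (\varepsilon_i \in \{\pm 1\}), \]
among the gradients at $\bm y$, with all coefficients $x_i^2$ non-zero. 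The goal is then to use the smoothness of the individual $V_i$ together with the complete-intersection hypothesis on $V_0 \cap V_1 \cap V_2$ to rule out such a dependence.

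When $|S| = 3$ one has $\bm y \in V_0 \cap V_1 \cap V_2$, and the complete-intersection hypothesis, interpreted (as is standard) as giving a smooth complete intersection of pure codimension~$3$, says exactly that the three gradients $\nabla F_0(\bm y), \nabla F_1(\bm y), \nabla F_2(\bm y)$ are linearly independent, yielding the desired contradiction. The main obstacle lies in the case $|S| = 2$: here $\bm y$ belongs to a pairwise intersection $V_i \cap V_j$ but need not belong to the third hypersurface $V_k$, so the triple-intersection hypothesis does not apply directly. My plan is to extract the required pairwise transversality from the stated hypotheses: the tangency locus $\{\bm y \in V_i \cap V_j : \nabla F_i(\bm y) \parallel \nabla F_j(\bm y)\}$ is a closed subset of the projective codimension-$2$ variety $V_i \cap V_j$, and by a projective dimension argument its intersection with the hypersurface $V_k$ would produce points of $V_0 \cap V_1 \cap V_2$ violating the purity of its codimension. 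This verification of pairwise transversality is the delicate part of the argument; the rest is purely formal manipulation of the Jacobian.
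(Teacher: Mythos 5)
Your approach is essentially the same as the paper's: pass to an affine chart (or the biaffine cone), apply the Jacobian criterion, and run a case analysis on the set $S$ of indices $i$ with $x_i\neq 0$. The cases $|S|=1$ (smoothness of one $V_i$) and $|S|=3$ (complete-intersection hypothesis applied to the triple locus) line up with the paper's argument. The paper treats the subcases with at least one nonzero $x_i$ uniformly, asserting in one line that a linear dependence among two or more gradients contradicts the complete-intersection hypothesis; you are right to flag $|S|=2$ as the delicate case, since the hypothesis is only about $V_0\cap V_1\cap V_2$, whereas $|S|=2$ only places $\bm y$ on a pairwise intersection $V_i\cap V_j$.

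However, the dimension argument you propose to close the $|S|=2$ case does not go through. The tangency locus $T:=\{\bm y\in V_i\cap V_j : \nabla F_i(\bm y)\parallel\nabla F_j(\bm y)\}$ is cut out inside $V_i\cap V_j$ by the vanishing of the $2\times 2$ minors of a $2\times(n+1)$ matrix; the rank-$\leqslant 1$ locus of such a matrix in $\p^n$ has expected codimension $n$, so $T$ has no reason to be of positive dimension — generically it is expected to be empty, and if nonempty it may well be a finite set of points. Without $\dim T\geqslant 1$ you cannot conclude that $T\cap V_k\neq\emptyset$ merely because $V_k$ is a hypersurface, so no points of $V_0\cap V_1\cap V_2$ are produced and there is nothing to contradict. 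What the argument actually requires is a pairwise transversality statement for $V_i\cap V_j$ along $V_i\cap V_j\smallsetminus V_k$, which is an extra geometric input not deducible from the smoothness of the individual $V_i$ together with $V_0\cap V_1\cap V_2$ being a complete intersection (one can arrange two smooth hypersurfaces to be tangent at an isolated point away from the third). This pairwise condition would either need to be added as a hypothesis or derived from some other feature of the Birch system.
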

        \begin{proof}
            It suffices to use an open covering of $Y$. For integers $k \in \{1, \dots, n\}$ and $i \in \{0, \dots, k\}$, let~$D_k(i)$ be the set of points $\bm x$ in $\p^k(\eQ)$ satisfying $x_i \neq 0$. We choose the covering defined by the open sets~$Y\cap(D_2(i)\times D_n(j))$ for $i \in \{0,1,2\}$ and $j \in \{ 0, \dots, n\}$. Then, $D_2(i)\times D_n(j)$ is isomorphic as a variety to the affine space of dimension $n+2$ over~$\eQ$. Without loss of generality, we can assume that $i = j = 0$. Let then $P = ((z_1,z_2),(t_1,\ldots, t_n))\in D_2(0) \times D_n(0) $, with $z_i = y_i/y_0$ and $t_i = x_i/x_0$, be such that \[ F_0(1,t_1, \ldots, t_n) + F_1(1,t_1,\ldots, t_n) z_1^2 - F_2(1,t_1,\dots, t_n) z_2^2 = 0.\]We now prove that the gradient at $P$ is non-zero. Suppose for contradiction that the gradient vanishes at $P$, and write the gradient $(J_{1, \ell}(P))_{0 \leqslant \ell \leqslant n+1}$ with
            \[ J_{1,0}(P) = 2F_1(1,t_1,\ldots, t_n) z_1, \; J_{1,1}(P) = -2F_2(1,t_1,\dots, t_n) z_2,   \]and 
            \[ \forall \ell \in \{2, \dots, n+1\}, \; J_{1, \ell} (P) =  \displaystyle {\partial F_0 \over \partial t_{\ell-1}}(1,t_1, \ldots, t_n) +  {\partial F_1 \over \partial t_{\ell-1}}(1,t_1,\ldots, t_n) z_1^2 - {\partial F_2 \over \partial t_{\ell-1}}(1,t_1,\ldots, t_n) z_2^2.\]Either $z_1 = z_2 = 0$, which contradicts the smoothness of the variety cut out by $(F_0 = 0)$, or $z_1$ or $z_2$ is non-zero and we deduce that at least two of the gradients of the $F_i$ at $(1, t_1, \dots, t_n)$ are linearly dependent, contradicting the fact that the variety cut out by $(F_0 = F_1 = F_2 = 0)$ is a complete intersection.
        \end{proof}

The main result of this section is the following. 

\begin{prop}\label{brauer subordonné}
    Let $n \geqslant 1$ and $\bm F = (F_0,F_1,F_2)$ be a Birch system of polynomials in $n+1$ variables, as in Definition \ref{polynomes de birch}. Let $\alpha \in \brau \eQ(\p^n)$ be the class of the quaternion algebra $(- F_1/F_0, F_2/F_0)$. We have a group isomorphism $\brausub (\p^n , \pi_{\bm F}) / \brau \eQ \simeq \eZ/ 2 \eZ$ and a generator is given by the class of $\alpha$.
\end{prop}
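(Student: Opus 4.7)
The plan is to verify in sequence that $\alpha$ lies in $\brausub(\p^n, \pi_{\bm F})$, that $\alpha \notin \brau \eQ$, and finally that $\brausub(\p^n, \pi_{\bm F}) / \brau \eQ$ has at most two elements.

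For the first point, I would pull $\alpha$ back to $X$: over the function field $\eQ(X)$, the class $\pi_{\bm F}^\ast \alpha$ is represented by the same quaternion algebra $(-F_1/F_0, F_2/F_0)$, which is precisely the Brauer class of the conic $(\cC_{\bm F, \bm y})$ and admits the tautological rational point $(x_0 : x_1 : x_2)$. Thus $\pi_{\bm F}^\ast \alpha = 0$ in $\brau \eQ(X)$, all its residues vanish, and $\alpha \in \brausub$. To see that $\alpha \notin \brau \eQ$, I would compute $\partial_{V_0}\alpha \equiv F_2/F_1 \pmod{\kappa(V_0)^{\ast 2}}$ via the tame symbol. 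Arguing as in the proof of the smoothness of $X$, the hypotheses that $V_0, V_1$ are smooth and that $V_0 \cap V_1 \cap V_2$ is a complete intersection force $V_0$ and $V_1$ to intersect transversely, so $F_1|_{V_0}$ has valuation one at $V_0 \cap V_1$ whereas $F_2|_{V_0}$ is a unit there; hence $F_2/F_1$ is not a square in $\kappa(V_0)$, and $\partial_{V_0}\alpha \neq 0$.

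For the upper bound, fix $\beta \in \brausub(\p^n, \pi_{\bm F})$ and analyse its ramification. If $D$ is a codimension one divisor of $\p^n$ other than $V_0, V_1, V_2$, then $\pi_{\bm F}^{-1}(D)$ is a smooth and geometrically integral conic, its function field is a regular extension of $\kappa(D)$ (so $H^1(\kappa(D), \eQ/\eZ)$ injects into the $H^1$ of this function field), and the $\brausub$ condition at the unique multiplicity one component forces $\partial_D \beta = 0$. At $V_i$ the fibre degenerates to a diagonal binary form $F_j x_j^2 + F_k x_k^2 = 0$; by the same transversality argument used above its discriminant is not a square in $\kappa(V_i)$, so the fibre is a single component of multiplicity one with residue field a genuine quadratic extension of $\kappa(V_i)$. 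Kummer theory then constrains $\partial_{V_i} \beta$ to lie in the subgroup of order two of $\kappa(V_i)^\ast / \kappa(V_i)^{\ast 2}$ generated by $\partial_{V_i} \alpha$.

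It remains to reduce the eight a priori admissible triples $(\partial_{V_0} \beta, \partial_{V_1} \beta, \partial_{V_2} \beta)$ to the expected two. For each pair $i \neq j$, the Gersten--Bloch--Ogus differential on the Brauer complex for $\p^n$ gives a reciprocity $\partial_{V_i \cap V_j} \partial_{V_i} \beta = \partial_{V_i \cap V_j} \partial_{V_j} \beta$ at the generic point of $V_i \cap V_j$. Since $F_j|_{V_i}$ and $F_i|_{V_j}$ both vanish to order one along $V_i \cap V_j$ while the other polynomial appearing in $\partial_{V_i}\alpha$ and $\partial_{V_j}\alpha$ is a unit there, this forces $\partial_{V_i} \beta$ to be non-trivial if and only if $\partial_{V_j}\beta$ is. Consequently the triple is either $(0,0,0)$ or $(\partial_{V_0}\alpha, \partial_{V_1}\alpha, \partial_{V_2}\alpha)$; in the first case purity together with $\brau(\p^n_\eQ) = \brau \eQ$ gives $\beta \in \brau \eQ$, and in the second case the same argument applied to $\beta - \alpha$ concludes.

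The delicate step will be the codimension two reciprocity: the precise form of the Gersten differential for $\brau \eQ(\p^n)$ must be pinned down, and the signs in the tame symbols (in particular the minus sign in $\partial_{V_2} \alpha \equiv - F_0 F_1$ reflecting the $+$ sign in the fibre equation $F_0 x_0^2 + F_1 x_1^2 = 0$ at $V_2$) must match the square classes of the discriminants of the degenerate fibres. A minor subtlety is to confirm, under the hypotheses, that each of these fibres is actually non-split over $\kappa(V_i)$, without which the predicted $\eZ/2\eZ$ would collapse to the trivial group.
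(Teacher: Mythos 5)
Your proof is correct in outline but takes a genuinely different route from the paper. The paper removes the codimension-$3$ locus $Z=V_0\cap V_1\cap V_2$ to make the fibration flat, invokes \cite[lemma 2.3]{den} to identify $\pi_{\bm F}^\ast\brausub(\p^n,\pi_{\bm F})$ with $\brauvert(X/\p^n)$, and then computes the latter abstractly: Schindler's theorem \cite[2.6]{schindler} gives $\brau X=\brau\eQ$, and the short exact sequence \cite[(2.12)]{dan} for the generic conic identifies $\ker\pi_{\bm F}^\ast=\langle\alpha\rangle$, from which $\brausub/\brau\eQ\simeq\langle\alpha\rangle\simeq\eZ/2\eZ$ drops out with no residue computations at all. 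You instead carry out the ramification analysis by hand: $\alpha\in\brausub$ via the tautological point, $\partial_D\beta=0$ for $D\notin\{V_0,V_1,V_2\}$ because the smooth conic is a regular extension of $\kappa(D)$, a $\eZ/2\eZ$ constraint at each $V_i$ coming from the conjugate pair of lines, and the Bloch--Ogus reciprocity at the generic points of $V_i\cap V_j$ to kill the mixed triples, finishing by purity. Your method is more self-contained (it avoids the appeal to Schindler's computation of $\brau X$ and to the $\brauvert$--$\brausub$ dictionary) but requires you to verify the two points you yourself flag: that $V_i\cap V_j$ meets each $V_i$ with odd multiplicity somewhere (so that the discriminants $F_j/F_k$ are non-squares in $\kappa(V_i)$ and the codimension-two residues $\partial_{V_i\cap V_j}\partial_{V_i}\alpha$ are nonzero), and that the signs in the tame symbols and the Gersten differential line up. The first is the genuinely delicate one: ``$V_0\cap V_1\cap V_2$ is a complete intersection'' controls codimensions but does not by itself force $V_i$ and $V_j$ to meet transversely along a component of $V_i\cap V_j$, so you would have to extract generic transversality from the smoothness of the $V_i$ together with the Birch condition (which bounds the singular locus of the Jacobian). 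The paper's route sidesteps exactly this by working with $\brau X$ instead of with the individual residues; on the other hand it silently assumes $\alpha\neq 0$, which is precisely the content of your direct residue computation, so the two proofs are in this sense complementary.
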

\begin{proof}
    Firstly, we assume that the morphism $\pi_{\bm F}$ is flat. Since $\bm F$ is a Birch system, we can assume~$n \geqslant 3$. Denote by $Z$ the common zero locus of $F_0,F_1$ and $F_2$. Since the variety cut out by $(F_0 = F_1 = F_2 = 0)$ is a complete intersection, $Z$ is a closed subset of $\p^n$ of codimension~$3$. Let $G := \pi_{\bm F}^{-1}(Z)$. It is a closed subset of $Y$ of codimension $3$ in the hypersurface $Y$ of $\p^2 \times \p^n$. Let $Y_1 := Y \smallsetminus G$ and $X := \p^n \smallsetminus Z$. Then, \cite[3.7.5]{CTS} implies the equality $\brau Y_1 = \brau Y$ and $\brau X \simeq \brau \p^n$ so that we have $  \brauvert(Y / \p^n) \simeq  \brauvert(Y_1 / X)$. Moreover, the projection $\pi_{\bm F}$ induces a morphism $\widetilde{\pi_{\bm F}} : Y_1 \longrightarrow  X$, which is flat by the miracle flatness lemma, and satisfying $ \brau_{\mathrm{sub}} (Y_1,\widetilde{\pi_{\bm F}})  \simeq \brau_{\mathrm{sub}} (Y,\pi_{\bm F}) $. Hence, \cite[lemma 2.3]{LRS} gives the equality
    \[ \brauvert(Y/ \p^n) = \pi_{\bm F}^\ast \brausub(\pi_{\bm F}, \p^n).\]We now compute $\brauvert(Y/\p^n)$. By \cite[2.6]{Schindler} we already know that the natural morphism $\brau~\eQ~\longrightarrow~\brau~Y$ is an isomorphism. Recall that we also have a short exact sequence \cite[(2.12)]{Loughran}, which can be written here as \[ \label{shortexactsequence}\tag{2.1}  0 \longrightarrow \langle \alpha \rangle \longrightarrow \brau \eQ(\p^n)  \overset{\pi_{\bm F}^\ast}{\longrightarrow}  \brau Y_{\eta'} \longrightarrow 0 \]~where $\eta'$ denotes the generic point of $Y$. In particular, we have $\pi_{\bm F}^\ast (\brau  \eQ(\p^n)  ) = \brau Y_{\eta'}$, and since $\brau Y $ is a subgroup of $\brau Y_{\eta'}$, it follows that
    \[ \brauvert(Y / \p^n) = \brau Y \cap \pi_{\bm F}^\ast (\brau  \eQ(\p^n)  ) = \brau Y \simeq \brau \eQ.\]At this point, we have an isomorphism
    \[ \pi_{\bm F}^\ast\brau_{\mathrm{sub}} (Y,\pi_{\bm F}) \simeq \brau \eQ. \]The conclusion comes from the short exact sequence (\ref{shortexactsequence}). 
    \end{proof}

\paragraph{Prediction for the constant.} Recall that the constant in \cite[conj. 3.8]{LRS} is given by the expression (\ref{cst conj}).

\begin{prop}\label{conj constante}
The constant predicted by \cite[conj. 3.8]{LRS} in the case of $\pi_{\bm F}$ is given by \begin{equation*}\label{constante}\tag{2.2}
    c_{\mathrm{LRS}}(\pi_{\bm F})  = { (n+1)^{1/2} \over (\pi d)^{3/2} }c_\infty \prod_p \left( 1 - {1 \over p} \right)^{-1/2} \left( 1 + {1 \over p} + \cdots + {1 \over p^n} \right) c_p,
\end{equation*}

\begin{equation*}\label{cinfty1} \tag{2.3} c_\infty = \mu_\infty(\{ {\bm t} \in [-1,1]^{n+1} : \vartheta_\eR(F_0(\bm t), F_1(\bm t) , F_2(\bm t)) = 1  \}),\end{equation*}
\begin{equation*}\label{cp1} \tag{2.4} c_p = \mu_p(\{ {\bm t} \in \eZ_p^{n+1} : \vartheta_p(F_0(\bm t), F_1(\bm t) , F_2(\bm t)) =  1  \}),\end{equation*}
and where we denote by $\mu_{\infty}$ the Lebesgue measure on $\eR^{n+1}$ and by $\mu_p$ the Haar measure on $\eQ_p^{n+1}$, normalised by $\mu_p(\eZ_p^{n+1} ) = 1 $. 
\end{prop}
\begin{proof}
  Since $(F_0,F_1,F_2)$ is a Birch system, the varieties $(F_i = 0)$ are geometrically irreducible as proved in \cite[§1.2]{SV}. It follows that the only codimension one points $D$ satisfying $\delta_D(\pi) < 1$ are $D_i := (F_i =~0)$ for $i \in \{0, 1 ,2\}$. The fibre $\pi_{\bm F}^{-1}(D_i)$ consists of two geometrical components interchanged by the non-trivial element of $\mathrm{Gal}(\overline{\kappa(D_i)}/ \kappa(D_i)) \simeq \eZ/ 2\eZ$, hence $\delta_{D_i}(\pi_{\bm F}) = {1 \over 2}$ for each $i \in \{0,1,2\}$, and we have $\Delta(\pi_{\bm F}) = {3 \over2}$. As in \cite[§4.7]{LRS}, using the previous notation, we thus obtain the equalities $\Gamma(\p^n, \pi_{\bm F}) =~\pi^{3 \over 2}$, $\theta(\p^n ) = {1 \over n+1}$, and $\eta(D_i) = {n+1 \over d}$ for all $i \in \{0,1,2\}$. Proposition \ref{brauer subordonné} enables us to write (see again \cite[§4.7]{LRS})
\[  \tau_{\pi_{\bm F}} (U(\eA_\eQ))^{\brausub(\p^n, \pi_{\bm F})} = {n+1 \over 2} c_\infty \prod_p \left( 1 - {1 \over p} \right)^{-1/2} \left( 1 + {1 \over p} + \cdots + {1 \over p^n} \right) c_p\]where $c_\infty$ and the constants $c_p$ are respectively given by (\ref{cinfty1}) and (\ref{cp1}). 
\end{proof}

\begin{rmk}
    The infinite product in $($\ref{constante}$)$ is convergent by \cite[th. 3.6]{LRS}.\end{rmk}

    \begin{rmk}\label{rmq ouverts}
        Here, the points of $\p^n$ under the non-split fibres are zeros of at least one of the $F_i$. The open $U_0$ in Conjecture \ref{conj lrs} is then given by 
        \[ U_0 := \p^n \smallsetminus \{ \bm y \in \p^n : F_0(\bm y) F_1(\bm y) F_2(\bm y)~=~0\}.\]Factor $($\ref{cinfty1}$)$ $($\textit{resp.} $($\ref{cp1}$))$ agrees with Conjecture \ref{conj lrs} since the set of $\bm t \in \eR^{n+1}$ $($\textit{resp.} $\eZ_p^{n+1})$ satisfying $F_0(\bm t) F_1(\bm t) F_2(\bm t) = 0$ is of measure zero. In the $p$-adic case, this follows from the equality
        \[ \mu_p (\{ \bm t \in \eZ_p^{n+1} : F_0(\bm t) F_1(\bm t) F_2(\bm t) = 0\}) = \lim_{N \to + \infty} {\#  \{ \bm t \in (\eZ \cap [0,p^N))^{n+1} : F_0(\bm t) F_1(\bm t) F_2(\bm t) = 0 \} \over p^{(n+1)N} } \]combined with \cite[lemma 5.4]{Birch}.
    \end{rmk}

\section{Application of the Selberg--Delange method}\label{sec III3}

We estimate the average of an arithmetic function in $k$ variables of the form~$\boldsymbol{\tau}_{\bm z} \ast g$, where $g$ is an arithmetic function taking small values. To achieve this goal, we use the average of $\boldsymbol{\tau}_{\bm z}$ provided by the Selberg--Delange method. Our result will be applied to establish Proposition \ref{calcul de MbmX}. It is a quite general result and it might be of use in broader contexts.\\

We fix a simply connected region $\mathcal{D}$ in $\eC^\ast$ which contains the half-line $[1, + \infty)$, and which does not contain any zero of $\zeta(s+1)$. Let $z \in \eC$. We use (see \cite[chap. II.5. p275 and theorem 5.1]{Tenenbaum}) the following Taylor series (at the origin) 
\[ \label{def ai} \tag{3.1}{(s \zeta(s+1))^z \over s+1} = \sum_{u \geqslant 0} \Gamma(z-u) a_u(z) s^u \quad \quad (\sigma > 0, z \in \eC), \]and
    \[ ( 1 - s)^z = \sum_{v \geqslant 0} b_v(z) s^v \quad \quad (\abs{s} < 1 ) .\]Let $\bm z \in \eC^k$. We define
\[ \psi(\bm s , \bm z) := (1 - s_1)^{z_1-1} \cdots (1 - s_k)^{z_k-1} \quad \quad (\forall i \in \{1, \dots , k\}, \; \abs{s_i} < 1), \]
\[ \xi(\bm s ; \bm z) := {(s_1 \zeta(s_1+1))^{z_1} \over s_1+1} \cdots {(s_k \zeta(s_k+1))^{z_k} \over s_k+1} \quad \quad (\bm s \in \mathcal{D}^k),\]  ${\bm n} ! := (n_1 !) \cdots (n_k!)$ if ${\bm n} \in \eN^k$, and more generally $\boldsymbol{\Gamma}({\bm z}) := \Gamma(z_1) \cdots \Gamma(z_k)$. \\

\begin{lemme}\label{lemme TN}
    Let ${\bm z} = (z_1, \ldots, z_k) \in \eC^k$, $g : \eN^k \to \eC$ an arithmetic function in $k$ variables and let $f=\boldsymbol{\tau}_{\bm z} \ast g$. Let $E_{{\bm z}, k} := \sum_{i=1}^k \max(1- \Re(z_i), 0)$. Assume that there exists~$\ell \in \eZ_{\geqslant 0}$ and a constant $M_{{\bm z}, \ell}(g) > 0$ such that
    \[ \sum_{{\bm d} \in \eN^k} {\abs{g({\bm d})} \over d_1\cdots d_k} (\log (\max d_i))^{\ell + 1 +E_{{\bm z}, k}} \leqslant M_{{\bm z}, \ell}(g).\]When $ X_1, \ldots, X_k  \geqslant 4$ satisfy $\min X_i \geqslant  (\max X_i)^\eta$ for some $\eta > 0$ that does not depend on the $X_i$, we have the estimate
    
    \begin{align*}
         \sum_{\substack{{\bm n} \in \eN^k \\ \forall i \in \{1, \dots, k\}, \; n_i \leqslant X_i}} \!\!\!\!\!\!\!\!\!\!\!\!\!f({\bm n}) = &\left\{ \prod_{i = 1}^k  {X_i \over (\log X_i)^{ 1 - z_i }}  \right\} \left(  \sum_{0 \leqslant \abs{\bm w} \leqslant \ell } \!\!c_{\bm w}({\bm z}; g) \!\!\prod_{1 \leqslant i \leqslant k} (\log X_i)^{- w_i}+ O_{\eta,\ell,\bm z} \left( { M_{{\bm z}, \ell}(g) \over (\log \min X_i )^{\ell+ 1 } } \right) \right),
    \end{align*}
    with $\displaystyle c_{\bm w}({\bm z}; g) = \displaystyle {(-1)^{\abs{\bm w}} \over \boldsymbol{\Gamma}({\bm z})} {\partial^{\abs{\bm w}} \psi(\bm s, \bm z) \over \displaystyle \partial^{w_1} s_1 \cdots \partial^{w_k} s_k} (\mathbf{0}) \displaystyle {\displaystyle \partial^{\abs{\bm w}} (\xi(\bm s ; \bm z) D_g(\mathbf{1 - s}))  \over \displaystyle \partial^{w_1} s_1 \cdots \partial^{w_k} s_k} (\mathbf{0})$.\\
    In particular, we have $c_0({\bm z}; g)~=~D_g(\mathbf{1})\underset{i = 1}{\overset{k}{\prod}}~a_0(z_i)$.
\end{lemme}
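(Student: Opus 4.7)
The plan is to reduce the $k$-variable problem to $k$ independent applications of the classical single-variable Selberg--Delange method, exploiting the factorisation $\boldsymbol{\tau}_{\bm z}(\bm m) = \prod_{i=1}^k \tau_{z_i}(m_i)$. Opening the convolution $f = \boldsymbol{\tau}_{\bm z} \ast g$, interchanging summations and using this factorisation yields
\[
\sum_{\substack{\bm n \in (\eZ_{>0})^k \\ n_i \leqslant X_i}} f(\bm n) = \sum_{\bm d} g(\bm d) \prod_{i=1}^k T_{z_i}(X_i/d_i), \qquad T_z(Y) := \sum_{m \leqslant Y} \tau_z(m),
\]
with $d_i \leqslant X_i$ implicit. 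I would then split the $\bm d$-sum at $d_i \leqslant X_i^{1/2}$: on the main range the hypothesis $\min X_i \geqslant (\max X_i)^{\eta}$ gives $\log(X_i/d_i) \asymp_\eta \log\min X_i$ uniformly, and the tail $d_i > X_i^{1/2}$ is treated with a crude divisor bound on $T_{z_i}$ together with the absolute summability furnished by $M_{\bm z,\ell}(g) < \infty$.

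For each factor $T_{z_i}(X_i/d_i)$ on the main range I would invoke the single-variable Selberg--Delange expansion (Perron's formula and a Hankel contour around $s=1$, with the Taylor datum (\ref{def ai}))
\[
T_z(Y) = Y \sum_{u=0}^{\ell} a_u(z)(\log Y)^{z-1-u} + O_{\ell,z}\bigl(Y(\log Y)^{\Re z - 2 - \ell}\bigr).
\]
Setting $s_i := \log d_i / \log X_i$, factoring out $(\log X_i)^{z_i-1}$, and Taylor-expanding
\[
\bigl(\log(X_i/d_i)\bigr)^{z_i-1-u_i} = (\log X_i)^{z_i-1-u_i} \sum_{v_i \geqslant 0} \binom{z_i-1-u_i}{v_i}(-s_i)^{v_i},
\]
and then taking the product over $i$, one obtains the expected leading factor $\prod_i X_i/(\log X_i)^{1-z_i}$ multiplied by a finite polynomial in $(\log X_i)^{-1}$ whose coefficients are weighted sums of products $\prod_i (\log d_i)^{v_i}$. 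Summing against $g(\bm d)/(d_1\cdots d_k)$ and using
\[
\left.\frac{\partial^{\bm v}}{\partial s_1^{v_1}\cdots\partial s_k^{v_k}} D_g(\bm 1 - \bm s)\right|_{\bm s = \bm 0} = \sum_{\bm d} \frac{g(\bm d)}{d_1\cdots d_k}\prod_{i=1}^k (\log d_i)^{v_i}
\]
converts each $\log d_i$ weight into a mixed partial of $D_g(\bm 1 - \bm s)$ at the origin. The binomials $\binom{z_i-1-u_i}{v_i}(-1)^{v_i}$ are then recognised as Taylor coefficients of $\psi(\bm s,\bm z)$, and the factors $\Gamma(z_i-u_i)a_{u_i}(z_i)$ arising from the single-variable Selberg--Delange terms as those of $\xi(\bm s;\bm z)$; organising these identifies the coefficient of $\prod (\log X_i)^{-w_i}$ with the claimed product form $c_{\bm w}(\bm z;g)$. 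The sanity check $\bm w = \bm 0$ collapses to $\psi(\bm 0,\bm z) = 1$ and $\xi(\bm 0;\bm z) = \bm\Gamma(\bm z)\prod a_0(z_i)$, reproducing $c_{\bm 0}(\bm z;g) = D_g(\bm 1)\prod a_0(z_i)$.

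The error analysis combines three contributions: (i) the Selberg--Delange error $(\log(X_i/d_i))^{\Re z_i - 2 - \ell}$ on the main range, uniformly comparable to $(\log\min X_i)^{\Re z_i - 2 - \ell}$ by the $\eta$-hypothesis; (ii) the Taylor truncation error of order $s_i^{\ell+1}$ in the $\psi$-expansion, which yields the same quality of decay after integration against $g$; and (iii) the tail $d_i > X_i^{1/2}$, controlled by a trivial bound $T_z(Y) \ll Y(\log Y)^{\max(\Re z - 1, 0)}$. The exponent $E_{\bm z, k} = \sum_i \max(1 - \Re z_i, 0)$ in the hypothesis on $M_{\bm z,\ell}(g)$ is precisely calibrated to absorb the weaker decay in coordinates with $\Re z_i < 1$, where the Selberg--Delange error degrades relative to the main term. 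Collecting and dividing out the prefactor $\prod_i X_i/(\log X_i)^{1-z_i}$ gives the claimed relative error $O\bigl(M_{\bm z,\ell}(g)/(\log\min X_i)^{\ell+1}\bigr)$. The main technical obstacle I anticipate is not the analysis itself but the combinatorial bookkeeping that identifies the mixed partials of $\psi$ and of $\xi\cdot D_g(\bm 1 - \cdot)$ in the compact product form of $c_{\bm w}(\bm z;g)$: this amounts to recognising a single generating-function identity underlying the Hankel-contour expansion, and arranging the Leibniz-type combinatorics uniformly in multi-index notation.
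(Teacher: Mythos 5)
Your proposal follows essentially the same route as the paper's proof: open the convolution $f = \boldsymbol{\tau}_{\bm z} \ast g$, exploit the factorisation $\boldsymbol{\tau}_{\bm z}(\bm m) = \prod_i \tau_{z_i}(m_i)$ to reduce to products of single-variable sums, split the $\bm d$-range at $\sqrt{X_i}$, apply the one-variable Selberg--Delange expansion on the main range, Taylor-expand $(\log(X_i/d_i))^{z_i-1-u_i}$ in the small quantity $\log d_i/\log X_i$, and identify the resulting $\log d_i$-weighted sums as mixed partials of $D_g(\bm 1 - \bm s)$, with the combinatorial data recombining into derivatives of $\psi$ and $\xi$; the role of $E_{\bm z,k}$ and of the comparability hypothesis $\min X_i \geqslant (\max X_i)^\eta$ are correctly identified. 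The only slip is the crude tail bound $T_z(Y) \ll Y(\log Y)^{\max(\Re z - 1, 0)}$, which misbehaves as $Y \to 1^+$ when $\Re z > 1$; writing $\log 2Y$ in place of $\log Y$ (as the paper does) fixes it.
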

\begin{proof}
    We write \begin{align*} \label{3.1}
         \sum_{\substack{ \bm n \in \eN^k \\ \forall i \in \{1, \dots, k\}, \; n_i \leqslant X_i}} \!\!\!\!\!\!\! f({\bm n}) & = \!\!\!\!\!\!\! \sum_{\substack{ \bm d \in \eN^k \\ \forall i \in \{1, \dots, k\}, \; d_i \leqslant X_i}} \!\!\!\!\!\!\! g({\bm d}) \left( \sum_{\substack{\bm m \in \eN^k \\ \forall i \in \{1, \dots, k\}, \; m_i \leqslant X_i / d_i}} \!\!\!\!\!\!\! \boldsymbol{\tau}_{\bm z} ( {\bm m}) \right). \tag{3.2}
    \end{align*}
 
  We now use the well-known estimate
    \begin{equation*}\label{selbergdelange}\tag{3.3}\sum_{\substack{ \bm m \in \eN^k \\ \forall i \in \{1 , \dots, k\}, \; m_i \leqslant X_i / d_i} } \!\!\!\!\!\!\!\!\!\!\!\!\!\!\boldsymbol{\tau}_{\bm z} ( {\bm m}) \ll {X_1 \cdots X_k \over d_1 \cdots d_k} \prod_{i = 1}^k \left( \log \left( {2X_i \over d_i} \right)\right)^{\Re (z_i) - 1},\end{equation*}which is valid whenever $d_i \leqslant X_i$ for all $i \in \{1, \dots, k\}$. We deduce 
    \begin{align*}
        \sum_{\substack{ \bm d \in \eN^k \\ \forall i \in \{1, \dots, k\}, \; d_i \leqslant X_i \\ \exists i_0 \; : \;  d_{i_0} > \sqrt{X_{i_0}} }} \!\!\!\!\!\!\!\!\!\!\! g({\bm d}) \left( \sum_{\substack{\bm m \in \eN^k \\ \forall i \in \{1, \dots, k\}, \; m_i \leqslant X_i / d_i}} \!\!\!\!\!\!\!\!\!\!\!\!\!\!\! \boldsymbol{\tau}_{\bm z} ( {\bm m}) \right) & \ll   X_1 \cdots X_k  \!\!\!\!\!\!\!\!\!\!\!\!\!\!\!\! \sum_{\substack{ \bm d \in \eN^k \\ \forall i \in \{1, \dots, k\}, \; d_i \leqslant X_i \\ \exists i_0 \; : \;  d_{i_0} > \sqrt{X_{i_0}} }} \!\!\!\!\!\!\!\!\!\!\!\!\! {\abs{g({\bm d})} \over  d_1 \cdots d_k }\prod_{i = 1}^k \left( \log \left( {2X_i \over d_i} \right)\right)^{\Re (z_i) - 1}  .
    \end{align*}We can now bound each factor $\left( \log \left( {2X_i \over d_i} \right)\right)^{\Re (z_i) - 1} $ by $(\log X_i)^{\Re(z_i) - 1 + \max (0, 1 - \Re(z_i))}$, so that
      \begin{align*}
        \sum_{\substack{ \bm d \in \eN^k \\ \forall i \in \{1, \dots, k\}, \; d_i \leqslant X_i \\ \exists i_0 \; : \;  d_{i_0} > \sqrt{X_{i_0}} }} \!\!\!\!\!\!\!\!\!\!\!\!\!\!  g({\bm d}) \left( \sum_{\substack{\bm m \in \eN^k \\ \forall i \in \{1, \dots, k\}, \; m_i \leqslant X_i / d_i}} \!\!\!\!\!\!\!\!\!\!\!\!\!\! \boldsymbol{\tau}_{\bm z} ( {\bm m}) \right) & \ll \left\{  \prod_{i = 1}^k X_i  \log\left( X_i \right)^{\Re (z_i) - 1} \right\} (\log \max X_i)^{E_{\bm z,k}}  \!\!\!\!\!\!\!\!\!\!\!\!\!\!\!\! \sum_{\substack{ \bm d \in \eN^k \\ \forall i \in \{1, \dots, k\}, \; d_i \leqslant X_i \\ \exists i_0 \; : \;  d_{i_0} > \sqrt{X_{i_0}}}} \!\!\!\!\!\!\!\!\!\!\!\! {\abs{g({\bm d})} \over  d_1 \cdots d_k } .
    \end{align*}
    Let $h$ be an integer such that $0 \leqslant h \leqslant \ell + 1 + E_{\bm z, \ell}$. For $y \geqslant 1$, we have the trivial bound
    \begin{equation*}
        \label{inegtriv}\tag{3.4}
        \forall d \geqslant y, \; (\log d)^h \leqslant \left( {\log 3d \over \log 3y } \right)^{\ell + 1 + E_{\bm z, \ell}} (\log 3y)^h,
    \end{equation*}which implies for $h= 0$, $d = d_{i_0}$ and $y =\sqrt{X_{i_0}}$, the bound
    \begin{align*} \sum_{\substack{ \bm d \in \eN^k \\ \forall i \in \{1, \dots, k\}, \; d_i \leqslant X_i \\ \exists i_0 : \; d_{i_0} > \sqrt{X_{i_0}} }}  {\abs{g({\bm d})} \over  d_1 \cdots d_k } & \ll_\eta {M_{{\bm z},\ell} (g)\over (\log (\min X_i))^{\ell  + 1 + E_{{\bm z}, k}} }.
    \end{align*}
    It follows that
    \begin{align*}
        \sum_{\substack{ \bm d \in \eN^k \\ \forall i \in \{1, \dots, k\}, \; d_i \leqslant X_i \\ \exists i_0 \; : \;  d_{i_0} > \sqrt{X_{i_0}}}}  g({\bm d}) \left( \sum_{\substack{\bm m \in \eN^k \\ \forall i \in \{1, \dots, k\}, \; m_i \leqslant X_i / d_i}} \!\!\!\!\!\! \boldsymbol{\tau}_{\bm z} ( {\bm m}) \right)\ll_\eta \left\{ \prod_{i = 1}^k  X_i (\log X_i)^{\Re(z_i) - 1 }  \right\} {M_{\bm z, \ell}(g) \over (\log \min X_i)^{\ell + 1} }, 
    \end{align*}
    which is an admissible error term. We now focus on the complementary contribution, that is the contribution of the $\bm d$ such that $d_i  \leqslant  \sqrt{X_i}$ for all $i \in \{1, \dots, k\}$. We can apply the Selberg--Delange method \cite[th. 5.2]{Tenenbaum} which yields the estimate
    \[  \sum_{m \leqslant x} \tau_z (m) = x  \sum_{u = 0}^{\ell} a_u(z) (\log x)^{z - 1 - u} + O\left(x (\log x)^{\Re(z) - 2 - \ell}\right) \quad \quad (x \geqslant 2). \]Thus, we have 
    \begin{align*}
        \sum_{\substack{{\bm m} \in \eN^k \\ \forall i \in \{1, \dots, k\}, \; m_i \leqslant X_i / d_i}} \!\!\!\!\!\!\!\! \boldsymbol{\tau}_{\bm z} ( {\bm m}) = & \prod_{i = 1}^k \left( {X_i \over d_i}  \sum_{u_i = 0}^\ell a_{u_i}(z_i) (\log (X_i/d_i))^{z_i - 1 - u_i} + O\left({X_i \over d_i} (\log (X_i / d_i))^{\Re(z_i) - 2 - \ell }\right) \right).
        \end{align*}
        Developing the product then leads to the following estimate which is valid for $d_i \leqslant \sqrt{X_i}$,
        \begin{align*}
       \sum_{\substack{{\bm m} \in \eN^k \\ \forall i \in \{1, \dots, k\}, \;  m_i \leqslant X_i / d_i}}\boldsymbol{\tau}_{\bm z} ( {\bm m}) = & \left\{ \prod_{i = 1}^k {X_i \over d_i}  \right\} \sum_{\substack{ {\bm u} \in \eZ^k \\  |\bm u | \leqslant \ell}} \prod_{i =1}^k a_{u_i}(z_i) (\log (X_i/d_i))^{z_i - 1 - u_i} \\ &+ O_{\ell,\bm z} \left( \left\{ \prod_{i = 1}^k {X_i \over d_i } (\log (X_i/d_i))^{\Re(z_i) - 1} \right\} \log \left( \min X_i \right)^{-(\ell+1)} \right).
        \end{align*}
        Since $d_i \leqslant \sqrt{X_i}$ for all $i \in \{ 1, \dots, k\}$, we can write the quantity $ (\log (X_i/d_i))^{z_i-j}  $ as
    \begin{align*} \tag{3.5} \label{dev log}   (\log X_i)^{z_i-j}  \left( 1 - {\log d_i \over \log X_i} \right)^{z_i-j} \!\!\!\!\!\!\!\!\!  =  (\log X_i)^{z_i-j}\left( \sum_{0 \leqslant v \leqslant \ell-j+1} \!\!\!\!\!\!\! b_v(z_i-j) \left( {\log d_i \over \log X_i} \right)^v \!\!\!\! + O\left( {\log d_i \over \log X_i} \right)^{\ell-j + 2} \right).
    \end{align*}
        Using (\ref{dev log}) for $j = 1+u_i$ yields
        \begin{align*} 
       \sum_{\substack{{\bm m} \in \eN^k \\ \forall i \in \{1, \dots, k\},\;  m_i \leqslant X_i / d_i}}\!\!\!\!\!\!\! \boldsymbol{\tau}_{\bm z} ( {\bm m}) = & \left\{ \prod_{i = 1}^k {X_i \over d_i} (\log X_i)^{z_i - 1 }  \right\} \sum_{ \substack{{\bm u,\bm v} \in \eZ^k \\ \abs{{\bm u + \bm v}}\leqslant \ell }} \prod_{i =1}^k a_{u_i}(z_i)   b_{v_i}(z_i - 1 - u_i) {(\log d_i)^{v_i} \over  (\log X_i)^{ u_i+v_i}} \\
        &+ O_{\ell, \bm z} \left( \left\{ \prod_{i = 1}^k {X_i  \over d_i } (\log X_i)^{\Re(z_i) - 1} \right\}   {(\log \max d_i)^{\ell +1+ E_{\bm z, k} } \over (\log \min X_i)^{  \ell+1 }} \right) .
    \end{align*}

   \noindent Let ${\bm w} = (w_1,\ldots, w_k)$ with $\abs{\bm w}  \leqslant \ell$, and 
    \[ \lambda_{\bm w}( {\bm z,\bm d}):= \sum_{ \bm u+\bm v=\bm w } \; \prod_{i =1}^k a_{u_i}(z_i)   b_{v_i}(z_i - 1 - u_i) (\log d_i)^{v_i}.\] At this point, we have the estimate

    \begin{align*}
         \sum_{\substack{{\bm m} \in \eN^k \\ \forall i\in \{1, \dots, k\}, \; m_i \leqslant X_i / d_i}} \!\!\!\!\!\!\!\!\!\!\!\!\!\! \boldsymbol{\tau}_{\bm z} ( {\bm m}) = & \left\{ \prod_{i = 1}^k {X_i \over d_i} (\log X_i)^{z_i - 1 }  \right\}  \sum_{ \abs{\bm w} \leqslant \ell } \lambda_{\bm w}( {\bm z,\bm d}) \prod_{i=1}^k (\log X_i)^{- w_i} \\
         & +O_{\ell, \bm z} \left( \left\{ \prod_{i = 1}^k {X_i \over d_i } (\log X_i)^{\Re(z_i) - 1} \right\} { (\log \max d_i)^{\ell+1+E_{\bm z,k}} \over (\log \min X_i)^{ \ell +1}} \right) .
    \end{align*}
    
    \noindent Inserting this expression in (\ref{3.1}) yields
    
    \begin{small}
    \begin{align*}
         \sum_{\substack{{\bm n} \in \eN^k \\  \forall i \in \{1, \dots, k\}, \; n_i \leqslant X_i}} \!\!\!\!\!\!\! f({\bm n}) = & \left\{ \prod_{i = 1}^k  X_i (\log X_i)^{z_i - 1 }  \right\}  \left(  \underset{i = 1}{\overset{k}{\prod}} a_0(z_i) \!\!\!\!\!\!\!\!\!\!  \sum_{\substack{{\bm d} \in \eN^k \\ \forall i \in \{1, \dots, k\}, d_i \leqslant  \sqrt{X_i} }}  \!\! \! \! \! \!  \!  {g({\bm d}) \over d_1 \cdots d_k} \right. \\
         & \left. + \sum_{ 1 \leqslant \abs{\bm w} \leqslant \ell } \! \! \! \! \! \! \! \! \! \!  \! \sum_{\substack{{\bm d} \in \eN^k \\ \forall i \in \{1, \dots, k\}, d_i \leqslant  \sqrt{X_i} }} \! \! \! \! \!\!\! \! \! \! \!  \!  \! { g({\bm d})  \lambda_{\bm w}( {\bm z,\bm d}) \over d_1 \cdots d_k} \prod_{1 \leqslant i \leqslant k} (\log X_i)^{- w_i}  + O_{\eta,\ell, \bm z}\left( { M_{\bm z, \ell} (g) \over (\log \min X_i)^{ \ell +1}} \right)  \right).
    \end{align*}
    \end{small}We shall remark that the sums over ${\bm d}$ are convergent by assumption on $g$. Moreover, the bound
     \[ \prod_{i = 1}^k (\log d_i)^{w_i} \ll \left( { \log(  \max d_i ) \over \log \min X_i } \right)^{\ell+ 1 + E_{\bm z,k}} (\log  \min X_i)^{\abs{\bm w} } \] ensures that the contribution of the triples $\bm d$ for which there exists $d_i >   \sqrt{X_i}$ contributes to the error term. Finally, we write
\begin{align*} 
c_{\bm w}({\bm z};g) & := \sum_{ \bm u+\bm v = \bm w } \prod_{i =1}^k a_{u_i}(z_i)   b_{v_i}(z_i - 1 - u_i) \sum_{{\bm d} \in \eN^k} { g({\bm d})   \over d_1 \cdots d_k} (\log d_1)^{v_1} \cdots (\log d_k)^{v_k} \\
& = \sum_{ \bm u+\bm v = \bm w } {\partial^{\abs{\bm v}} D_g \over \partial^{v_1} s_1 \cdots \partial^{v_k} s_k} ( \mathbf{1} )  \prod_{i =1}^k a_{u_i}(z_i)   b_{v_i}(z_i - 1 - u_i)
\end{align*}
and use the equalities  
\[ 
    {1 \over {\bm v}!} {\partial^{\abs{\bm v}} \psi(\bm s, \bm z)\over \partial^{v_1} s_1 \cdots \partial^{v_k} s_k} (\mathbf{0})  = \prod_{i = 1}^k  b_{v_i}(z_i - 1) \quad \text{ and }\quad 
    {1 \over {\bm u}!} {\partial^{\abs{\bm u}} \xi(\bm s ; \bm z) \over \partial^{u_1} s_1 \cdots \partial^{u_k} s_k} (\mathbf{0})  = \prod_{i = 1}^k \Gamma(z_i - u_i) a_{u_i}(z_i)
\]to conclude that
  \begin{align*} 
c_{\bm w}({\bm z};g) & = \sum_{ \bm u+\bm v = \bm w } {1 \over {\bm u}!}{1 \over {\bm v}!}  {\partial^{\abs{\bm v}} D_g \over \partial^{v_1} s_1 \cdots \partial^{v_k} s_k} ( \mathbf{1} )  {\partial^{\abs{\bm u}}\xi(\bm s ; \bm z) \over \partial^{u_1} s_1 \cdots \partial^{u_k} s_k} (\mathbf{0}) \left( {1 \over \boldsymbol{\Gamma}(\mathbf{z-u})} {\partial^{\abs{\bm v}} (1 - \mathbf{s})^{\mathbf{z-u} - 1} \over \partial^{v_1} s_1 \cdots \partial^{v_k} s_k} (\mathbf{0}) \right)\\
& = {1 \over \boldsymbol{\Gamma}({\bm z})} {\partial^{\abs{\bm w}} \psi(\bm s, \bm z) \over \partial^{w_1} s_1 \cdots \partial^{w_k} s_k} (\mathbf{0}) \sum_{ \bm u+\bm v = \bm w } {(-1)^{\abs{\bm u}} \over {\bm u}!}{1 \over {\bm v}!}  {\partial^{\abs{\bm v}} D_g \over \partial^{v_1} s_1 \cdots \partial^{v_k} s_k} ( \mathbf{1} )  {\partial^{\abs{\bm u}} \xi(\bm s ; \bm z) \over \partial^{u_1} s_1 \cdots \partial^{u_k} s_k} (\mathbf{0}) \\
& = {(-1)^{\abs{\bm w}} \over \boldsymbol{\Gamma}({\bm z})} {\partial^{\abs{\bm w}}  \psi(\bm s, \bm z) \over \partial^{w_1} s_1 \cdots \partial^{w_k} s_k} (\mathbf{0}) {\partial^{\abs{\bm w}} (D_g(\mathbf{1 - s})\xi(\bm s ; \bm z)) \over \partial^{w_1} s_1 \cdots \partial^{w_k} s_k} (\mathbf{0}). 
\end{align*}
This completes the proof of Lemma \ref{lemme TN}.
\end{proof}

\section{Proportion of diagonal plane conics with a rational point and coefficients in arithmetic progression}\label{sec III4}

Let $q \geqslant 2$ and ${\bm n} \in (\eZ \cap [0,q))^3$ such that $\gcd(n_0,n_1,n_2,q)= 1$. For $\bm t \in \eN^3$, let $Q_{\bm t}$ be the quadratic form defined by $Q_{\bm t} (y_0,y_1,y_2)=  t_0 y_0^2 + t_1 y_1^2 - t_2 y_2^2$. For $B \geqslant 2$ and $\bm B = (B_0,B_1,B_2) \in \left[ {B \over (\log B)^2}, B \right]^3$, we introduce the quantity \[ N({\bm n},q,\bm B) := \# \left\{  {\bm t} \in \eN^3 : \begin{tabular}{c} $\gcd (t_0,t_1,t_2) = 1, \; \forall i \in \{0,1,2\}, \; t_i \leqslant B_i$,\\
$ Q_{\bm t}\text{ has a $\eQ$-point}, \; \bm t \equiv \bm n \Mod{q}$ \end{tabular}\right\}.\]We aim to estimate $N({\bm n},q,\bm B)$ as $B$ goes to $+ \infty$, uniformly in $q$, when $q$ is bounded by a power of $\log B$. We recall that the notation $\vartheta_p = \vartheta_{\eQ_p}$ is defined in (\ref{notation}).

\begin{rmk}\label{rmq cruciale}
     Let $q$ be a positive integer divisible by $8$. Let $ \bm n = (n_0,n_1,n_2) \in (\eZ \cap [0,q))^3$ and~$\bm t = (t_0, t_1,t_2) \in \eN^3$ such that $\gcd(t_0,t_1,t_2) = 1$ and $\bm t \equiv \bm n \Mod{q}$. For any prime divisor~$p$ of $q$, \cite[chap. 3. th. 1]{Serre2} ensures that if $\max (v_p(n_0n_2),v_p(n_1n_2)) < v_p(q) $  when $p$ is odd, (\textit{resp.} $\max (v_2(n_0n_2),v_2(n_1n_2)) < v_2(q) - 2$), then the quantity~$\vartheta_p(\bm t) $ (\textit{resp.} $\vartheta_2(\bm t)$) only depends on $\bm n$. In this case, we write~$\vartheta_p(\bm t) = \vartheta_p(\bm n)$. 
\end{rmk}  Therefore, if $8$ divides $q$, the quantity
\begin{equation*}\label{facteurs delta}\tag{4.1}
  \delta_p(\bm n) := \left\{ \begin{tabular}{cc}
      $ \vartheta_p(\bm n) $ & if $p \mid q$, \\
      \\
    $\displaystyle {\left(1 + {1 \over p} + {1 \over p^2}\right)\left(1 + {1 \over 2p} + {1 \over p^2}  \right) \over \left(1 - {1 \over p^2}\right)^2}$  & otherwise,
   \end{tabular} \right.
\end{equation*}
is well-defined for all $\bm n \in (\eZ / q \eZ)^3$ whose representative in $[0,q)^3$, still denoted $\bm n$, satisfies for any $p \mid q$ the condition $\max (v_p(n_0n_2),v_p(n_1n_2)) < v_p(q) $ when $p$ is odd, (\textit{resp.} $\max (v_2(n_0n_2),v_2(n_1n_2)) < v_2(q)- 2$). The choice of the value of $\delta_p(\bm n)$ for $p \nmid q$ comes from the value of the factor corresponding to $p$ in the Euler product in \cite[th. 1.1]{LRS}.

\begin{tho}\label{problème de comptage}
         Let $q \in \eN$ be a square-full integer divisible by $8$ and $\bm n \in [0,q)^3$ such that $\gcd(\bm n, q)=1$. Let $B \geqslant 16$, and $\bm B = (B_0,B_1,B_2) \in \left[ {B \over (\log B)^2}, B \right]^3$. If $q \leqslant (\log B)^{1/7}$, $\max (v_2(n_0n_2),v_2(n_1n_2)) < v_2(q) - 2$ and $\max (v_p(n_0n_2),v_p(n_1n_2)) < v_p(q)$ for all odd prime $p$ dividing $q$, then we have the estimate
    \begin{align*} 
    N({\bm n},q,\bm B) & = {2 \over \pi^{3/2}\varphi(q)^3}\left( \prod_{p \in \cP}\left(1 - {1 \over p} \right)^{3/2}  \delta_p(\bm n) \right) \prod_{0 \leqslant i \leqslant 2} {B_i \over (\log B_i)^{1/2}} + O \left( {B_{012} (\log_2 B)q^3 \over \varphi(q)^3 (\log B)^{5/2}} \right),
    \end{align*}
    and the implied constant does not depend on $q$ and $\bm n$.
    \end{tho}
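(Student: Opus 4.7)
By the Hasse--Minkowski theorem, $\vartheta_\eQ(\bm t) = \vartheta_\eR(\bm t) \prod_p \vartheta_p(\bm t)$. Since all $t_i$ are positive, the form $t_0 x_0^2 + t_1 x_1^2 - t_2 x_2^2$ is always isotropic over~$\eR$, so $\vartheta_\eR \equiv 1$; and for $p \mid q$ the assumptions on $v_p(n_i n_j)$ combined with Remark~\ref{rmq cruciale} yield $\vartheta_p(\bm t) = \vartheta_p(\bm n) = \delta_p(\bm n)$, which is constant on the arithmetic progression. The problem thus reduces to estimating
\[ S(\bm n, q, B) := \!\!\!\!\!\!\sum_{\substack{\bm t \in (\eZ_{>0})^3,\; \max_i t_i \leqslant B \\ \gcd(t_0, t_1, t_2) = 1 \\ \bm t \equiv \bm n \Mod{q}}} \prod_{p \nmid q} \vartheta_p(\bm t), \]
and multiplying the result by $\prod_{p \mid q} \delta_p(\bm n)$.

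For the remaining local factors, I would write $t_i = u_i^2 r_i$ with $r_i$ squarefree, and then decompose squarefree triples with $\gcd(r_0, r_1, r_2) = 1$ through the standard bijection $r_0 = \ell_0 m_{01} m_{02}$, $r_1 = \ell_1 m_{01} m_{12}$, $r_2 = \ell_2 m_{02} m_{12}$ with the six variables pairwise coprime and squarefree. For $p \nmid q u_0 u_1 u_2$, the factor $\vartheta_p(\bm t) = \vartheta_p(\bm r)$ is an explicit Jacobi--symbol expression in the $\ell_i, m_{ij}$; writing $\vartheta_p = \tfrac{1}{2}(1 + \epsilon_p)$ and expanding the product over $p \nmid q$ produces a finite sum of twisted convolutions of the shape $\boldsymbol{\tau}_{\bm z} \ast g$ in three variables with $\bm z = (\tfrac{1}{2}, \tfrac{1}{2}, \tfrac{1}{2})$. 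This choice of $\bm z$ is dictated by the expected order $(\log B)^{-3/2}$, since $\boldsymbol{\Gamma}(\bm z) = \pi^{3/2}$ appears in the main term of Lemma~\ref{lemme TN}. The congruence $\bm t \equiv \bm n \Mod{q}$ is opened via orthogonality of Dirichlet characters modulo~$q$, adding a further layer of character twists.

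Applying Lemma~\ref{lemme TN} termwise, the principal character (in both the Jacobi--symbol and the Dirichlet--character expansions) produces the leading term via $c_{\bm 0}(\bm z; g) = D_g(\bm 1)\prod_{i=1}^3 a_0(\tfrac{1}{2})$. Computing $D_g(\bm 1)$ as an Euler product, combining with $\prod_{p \mid q}\delta_p(\bm n)$ from the first step, with the density $1/\varphi(q)^3$ arising from orthogonality (justified by the hypothesis $\gcd(\bm n, q) = 1$ coordinatewise), and with $a_0(\tfrac{1}{2})^3 = \Gamma(\tfrac{1}{2})^{-3} = \pi^{-3/2}$, should reconstruct the constant $2\pi^{-3/2}\varphi(q)^{-3}\prod_p (1 - 1/p)^{3/2}\delta_p(\bm n)$ exactly. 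Non-principal characters yield Dirichlet series regular at $\bm s = \bm 1$, and their contributions are absorbed into the Selberg--Delange error with a saving of $(\log B)^{-1}$.

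The main obstacle is the uniformity in~$q$. The moment $M_{\bm z, \ell}(g)$ controlling~$g$ in Lemma~\ref{lemme TN}, as well as the sum over non-principal Dirichlet characters modulo~$q$, both grow polynomially in~$q$, while the saving from the non-principal contribution is only logarithmic; balancing these against each other is what forces the restriction $q \leqslant (\log B)^{1/7}$ and produces the explicit error $O(B^3 \log_2 B \cdot q^3 \varphi(q)^{-3} (\log B)^{-5/2})$, with the $\log_2 B$ factor arising from the bookkeeping of logarithmic weights in the Selberg--Delange expansion.
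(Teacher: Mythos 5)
Your overall strategy --- Hasse--Minkowski, stripping out the primes $p\mid q$ via Remark~\ref{rmq cruciale}, the square-part/gcd decomposition $t_i = b_i^2 m_{ij}m_{ik}\ell_i$, opening the congruence by orthogonality modulo~$q$, and applying Lemma~\ref{lemme TN} with $\bm z = (\tfrac12,\tfrac12,\tfrac12)$ so that $\Gamma(\tfrac12)^{-3}=\pi^{-3/2}$ drives the main term --- is the same as the paper's, and your bookkeeping of the leading constant is in order.

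However, there is a genuine gap at the analytic heart of the argument. When you expand
\[
\prod_{\substack{p\mid u\\ p\nmid q}}\tfrac12\bigl(1+(ac,bc)_p\bigr)
\]
the result is not ``a finite sum of twisted convolutions of the shape $\boldsymbol{\tau}_{\bm z}\ast g$.'' After grouping the diagonal terms (the empty and full subsets of primes, which coincide by the Hilbert product formula once one knows $(ac,bc)_p=1$ for $p\mid q$), one obtains, as in Proposition~\ref{réécriture de Nbms(X)},
\[
\frac{1}{\tau\bigl(u/\gcd(u,q)\bigr)}\bigl(2+\Theta(a,b,c)\bigr),
\]
where $\Theta$ is the sum (\ref{theta}) of Jacobi symbols $\left(\tfrac{bc}{u_0}\right)\left(\tfrac{ac}{u_1}\right)\left(\tfrac{-ab}{u_2}\right)$ over nontrivial factorisations $u_iu_i'$ of the odd coprime-to-$q$ parts of $a,b,c$. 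In $\Theta$ \emph{both} the argument and the modulus of each Jacobi symbol vary with the summation variables $\bm\ell$: this is a genuine bilinear character sum, not a convolution against any fixed character, and there is no Dirichlet series ``regular at $\bm s=\mathbf 1$'' attached to it. So your statement that ``non-principal characters yield Dirichlet series regular at $\bm s=\mathbf 1$, and their contributions are absorbed into the Selberg--Delange error with a saving of $(\log B)^{-1}$'' accounts only for the non-principal Dirichlet characters modulo $q$ introduced by orthogonality (handled in the paper by Lemma~\ref{car non princ} via \cite[lemma 1]{Iwa}), and says nothing about $\Theta$. The paper must deal with $\Theta$ separately in Proposition~\ref{estimation reste}, using the Friedlander--Iwaniec bilinear bound \cite[lemma 2]{Iwa} for large conductors together with the small-conductor estimate \cite[lemma 1]{Iwa} (suitably modified to carry the congruence modulo $q$). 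Without this input the oscillatory part is unbounded and the proof plan does not close; this is precisely the Serre--Guo--Hooley--Loughran--Rome--Sofos difficulty that makes the theorem nontrivial.

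A smaller inaccuracy: the quantity $M_{\bm z,\ell}(g)$ does not grow polynomially in~$q$ --- the paper's bound (\ref{hyp SD}) gives $M\ll(\log_2(5\Delta))^{3/2}$. The $q^3$ in the error term comes from the number of residue classes $\bm r$ modulo $q$ and the trivial bound on $\gamma_{\bm n}(\bm b,\bm m)$, while the extra $\log_2 B$ comes from Taylor-expanding $\bigl(\log(X_i/\gcd(q,r_i))\bigr)^{-1/2}$ in the proof of Proposition~\ref{Estimation Tn}, not from ``bookkeeping of logarithmic weights.''
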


The end of this section and the next section are dedicated to the proof of Theorem \ref{problème de comptage}.

\paragraph{Step 1 : preliminary calculations.} We follow the strategy of \cite[§5.8]{LRS}. We denote by~$C_{\bm m,\bm \ell}$ the conic of equation \[ m_{12}\ell_0 Y_0^2 + m_{02}\ell_1 Y_1^2 = m_{01}\ell_2Y_2^2.\]Recall that the notation $\ell_{012}$ stands for $ \ell_0\ell_1\ell_2$ and $m_{012}$ stands for $  m_{01}m_{02}m_{12}$.\\

For ${\bm X} = (X_0,X_1,X_2)\in \eN^3$, ${\bm b} = (b_0,b_1,b_2) \in \eN^3$, ${\bm m}= (m_{01},m_{02},m_{12}) \in \eN^3$, and ${\bm r} = (r_0,r_1,r_2) \in (\eZ/q\eZ)^3$, we introduce the quantity
    \[ \cN_{\bm b,\bm m,\bm r} ({\bm X}) := \sum_{\substack{ \forall i \in \{0,1,2\}, \;\ell_i \leqslant X_i \\ \gcd(\ell_i,b_j,b_k) =  \gcd(\ell_{012},m_{012}) = 1 \\  \bm \ell \equiv \bm r \Mod{q} } } \!\!\!\!\! \mu^2(\ell_{012}) \prod_{p \nmid q} \vartheta_p(m_{12}\ell_0, m_{02} \ell_1, m_{01} \ell_2).\]We start by writing $N(\bm n, q,\bm B)$ using $\cN_{\bm b,\bm m,\bm r} $.

\begin{lemme}\label{réécriture Nn(B)}
Let $C > 0$. We have $N(\bm n, q, \bm B) = 0$ if $\displaystyle \prod_{p \mid q} \vartheta_p(\bm n) = 0$. Otherwise, $N({\bm n},q,\bm B) $ is equal to
    \begin{align*}
          \sum_{\substack{{\bm b}, \bm m \in \left[1,(\log B)^C\right]^3 \\\gcd(m_{ij},b_k)=1 \\ \gcd(b_0,b_1,b_2) =1} } \sum_{\substack{{  \bm r}\in (\eZ/q \eZ)^3\\ b_i^2 m_{ij} m_{ik} r_i \equiv n_i \Mod{q} } } \!\!\!\!\!\!\! \mu^2(m_{012})& \cN_{\bm b,\bm m,\bm r} \left( {B_0 \over b_0^2 m_{01}m_{02}}, {B_1 \over b_1^2 m_{01}m_{12}},  {B_2 \over b_2^2 m_{12}m_{02}} \right)\\
        &+ O_C\left( {q^3 B_{012}  \over (\log B)^C}  \right).
    \end{align*} 
\end{lemme}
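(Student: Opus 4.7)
The statement naturally decomposes into two parts: the vanishing when $\prod_{p \mid q}\vartheta_p(\bm n)=0$, and the decomposition formula otherwise. For the vanishing, my plan is to apply Remark~\ref{rmq cruciale} directly: under the numerical conditions imposed on $v_p(n_i)$, any $\bm t\equiv \bm n\Mod{q}$ satisfies $\vartheta_p(\bm t) = \vartheta_p(\bm n)$ for every $p\mid q$. If some $\vartheta_p(\bm n)$ vanishes, the conic $Q_{\bm t}$ is insoluble at $p$, hence globally insoluble by Hasse--Minkowski, giving $N(\bm n, q, B) = 0$.

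Assume from now on that $\vartheta_p(\bm n)=1$ for every $p\mid q$. My plan is to use the classical decomposition already used by Serre and Loughran--Rome--Sofos. For each $\bm t$ with $\gcd(t_0, t_1, t_2) = 1$, I would write $t_i = b_i^2 s_i$ with $s_i$ squarefree. Since $\gcd(t_0,t_1,t_2)=1$ we have $\gcd(s_0,s_1,s_2)=1$, so it is legitimate to set $m_{ij}:=\gcd(s_i, s_j)$ and $\ell_i := s_i/(m_{ij}m_{ik})$; the resulting $m_{ij}$'s and $\ell_i$'s are then pairwise coprime squarefree integers with $\gcd(\ell_{012},m_{012})=1$. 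A one-prime argument (a prime violating any of the side conditions would have to divide all three $t_i$) then shows that the map $\bm t\mapsto (\bm b,\bm m, \bm\ell)$ is a bijection onto the set of tuples satisfying $\mu^2(m_{012})=\mu^2(\ell_{012})=1$, $\gcd(b_0, b_1, b_2) = 1$, $\gcd(m_{ij}, b_k) = 1$ and $\gcd(\ell_i, b_j, b_k) = 1$.

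Next, I would rewrite the solubility condition. Multiplying the conic $s_0 x_0^2 + s_1 x_1^2 = s_2 x_2^2$ by $m_{012}$ and substituting $y_0 = m_{01}m_{02}x_0$, $y_1 = m_{01}m_{12}x_1$, $y_2 = m_{02}m_{12}x_2$, one identifies it over $\eQ$ (and over each $\eQ_p$) with $m_{12}\ell_0\, y_0^2 + m_{02}\ell_1\, y_1^2 = m_{01}\ell_2\, y_2^2$. Since the $t_i$'s are positive, the archimedean place is automatically soluble, so Hasse--Minkowski yields
\[
\vartheta_\eQ(\bm t) = \Bigl(\prod_{p\mid q}\vartheta_p(\bm n)\Bigr)\cdot \prod_{p\nmid q}\vartheta_p(m_{12}\ell_0, m_{02}\ell_1, m_{01}\ell_2),
\]
where the first factor is $1$ by hypothesis (via Remark~\ref{rmq cruciale}). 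Translating the congruence $\bm t\equiv \bm n\Mod{q}$ as $b_i^2 m_{ij}m_{ik}\ell_i\equiv n_i\Mod{q}$ and grouping the $\bm\ell$'s by their residue class $\bm r\Mod{q}$ then produces exactly the inner sum $\cN_{\bm b,\bm m,\bm r}$, and the identity stated in the lemma without the truncation on $(\bm b,\bm m)$.

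The remaining step is to restrict the outer sum to $(\bm b,\bm m)\in [1,(\log B)^C]^3$. Since $b_i^2\mid t_i\leqslant B$ and $m_{ij}\mid \gcd(t_i,t_j)$, the tail contributions are controlled by crude divisor bounds of the form $\sum_{b>(\log B)^C}(B/b^2)\cdot B^2 \ll B^3/(\log B)^C$, with an analogous estimate when some $m_{ij}$ exceeds $(\log B)^C$; both comfortably fit inside the admissible error $O_C(B^3 q^3/(\log B)^C)$. I expect the main obstacle to be not any analytic subtlety, but the careful bookkeeping of all the coprimality conditions when establishing the bijection above.
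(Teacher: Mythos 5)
Your proposal is correct and follows essentially the same route as the paper: peel off the real place and the primes dividing $q$ via Hasse--Minkowski and Remark~\ref{rmq cruciale}, write $t_i = b_i^2 s_i$ with $s_i$ squarefree, factor $s_i = m_{ij}m_{ik}\ell_i$ with $m_{ij} = \gcd(s_i,s_j)$, transform the conic by the linear substitution (\ref{4.7.1}) to $m_{12}\ell_0 y_0^2 + m_{02}\ell_1 y_1^2 = m_{01}\ell_2 y_2^2$, regroup the $\bm\ell$'s by residue class $\bm r$ modulo $q$, and truncate $\bm b$ and $\bm m$ using the trivial bounds $\cN_{\bm b,\bm m,\bm r}(\bm X)\leqslant X_0X_1X_2$ and $\sum_{b>(\log B)^C} b^{-2}\ll (\log B)^{-C}$. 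The only cosmetic difference from the paper is that you phrase the $\bm t \leftrightarrow (\bm b, \bm m, \bm\ell)$ correspondence as a bijection up front rather than unwinding it in two successive substitutions, but the bookkeeping of coprimality conditions is the same; the $q^3$ in the error, which you correctly absorb, reflects the at most $q^3$ residue classes $\bm r$ compatible with a given $(\bm b, \bm m)$.
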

\begin{proof}
Using the Hasse--Minkowski theorem \cite[ch. 3, th. 8]{Serre2}, we write 
\[ \vartheta_\eQ(\bm t) = \vartheta_\eR (\bm t) \prod_{p } \vartheta_p(\bm t) = \prod_{p } \vartheta_p(\bm t),\]where the last equality comes from the positivity of $t_0,t_1,t_2$. Since the integers $t_i$ are coprime, we can use Remark \ref{rmq cruciale} which enables us to isolate the solubility in $\eQ_p$ for all $p \mid q$, and which gives the first part of the lemma. Let 
\[ N'({\bm n},q,\bm B) := \# \left\{ {\bm t} \in \eN^3 : \begin{tabular}{l} $\forall i \in \{0,1,2\}, \; t_i \leqslant B_i$, $\gcd(t_0,t_1,t_2) = 1$, \\  $\forall p \nmid q, \; \vartheta_p(\bm t) = 1$, ${\bm t} \equiv {\bm n} \; (\bmod \; q)$ \end{tabular} \right\}. \]Since $\max (v_2(n_0n_2),v_2(n_1n_2)) < v_2(q) - 2$ and $\max (v_p(n_0n_2),v_p(n_1n_2)) < v_p(q)$ for all odd $p$ dividing~$q$, the solubility in~$\eQ_p$ for all $p \mid q$ only depends on $\bm n$. It follows that 
\[ N({\bm n},q,\bm B) =  \left(\prod_{p \mid q} \vartheta_p(\bm n) \right) N'({\bm n},q,\bm B) .\]
 For $i \in \{0,1,2\}$, the integer $t_i$ can be uniquely written $t_i = b_i^2 c_i$ with $b_i, c_i \in \eN$ and $\mu^2(c_i) = 1$. We can then decompose $N'({\bm n},q,\bm B) $ as
\begin{align*}
   N'({\bm n},q,\bm B) & = \sum_{ \substack{ \forall i \in \{0,1,2\}, \; t_i \leqslant B_i   \\ \gcd(t_0,t_1,t_2) = 1  \\ {\bm t } \equiv {\bm n} \Mod{q}} } \prod_{p \nmid q} \vartheta_p(\bm t)\\
   & = \sum_{ \substack{ \forall i \in \{0,1,2\}, \; b_i \leqslant \sqrt{B_i}  }}  \sum_{\substack{{ \bm v}\in (\eZ/q \eZ)^3 \\
   b_i^2 v_i \equiv n_i \Mod{q}}} \sum_{ \substack{ \forall i \in \{0,1,2\}, \;  c_i \leqslant B_i/b_i^2   \\ \gcd(b_0c_0,b_1c_1,b_2c_2) = 1 \\ {\bm c} \equiv {\bm v} \Mod{q} } } \!\!\!\!\!\!\!\!\!\!\!\!\!\!\! \mu^2(c_0)\mu^2(c_1)\mu^2(c_2) \prod_{p \nmid q} \vartheta_p(\bm c),
\end{align*}
where the last equality comes from the fact that the number of $\eQ$-points of a conic only depends on the square-free part of its coefficients. We now handle the contribution of the terms for which the $b_i$ are sufficiently large. Let $C > 0$. If there exists $ i_0 \in \{0,1,2\}$ such that $b_{i_0} > (\log B)^C$. We have the upper-bound
\[ 
    \sum_{ \substack{ \bm c \in \eN^3  \\ \gcd(b_0c_0,b_1c_1,b_2c_2) = 1 \\ \forall i \in \{0,1,2\}, \; c_i \leqslant B_i/b_i^2 } } \!\!\!\!\!\!\!\!\!\!\!\!\!\!\! \mu^2(c_0)\mu^2(c_1)\mu^2(c_2) \prod_{p \nmid q} \vartheta_p(\bm c) \leqslant  {B_{012} \over b_0^2 b_1^2 b_2^2}\]so that
    \begin{align*}
        \sum_{ \substack{ \bm b \in \eN^3  \\ b_{i_0} > (\log B)^C } } \sum_{ \substack{ \bm c \in \eN^3  \\ \gcd(b_0c_0,b_1c_1,b_2c_2) = 1 \\ \forall i \in \{0,1,2\}, \; c_i \leqslant B_i/b_i^2 } } \!\!\!\!\!\!\!\!\!\!\!\!\!\!\! \mu^2(c_0)\mu^2(c_1)\mu^2(c_2)\prod_{p \nmid q} \vartheta_p(\bm c)&  \leqslant  B_{012}\!\!\!\!\!\! \sum_{ \substack{ \bm b \in \eN^3  \\  b_{i_0} > (\log B)^C } }  {1  \over b_0^2 b_1^2 b_2^2} \ll {B_{012} \over (\log B)^C}.
    \end{align*} Therefore, we deduce that
    \begin{align*}
        N'({\bm n},q,\bm B) = & \sum_{ \substack{ \bm b \in   \left[1, (\log B)^C\right]^3 }}  \sum_{\substack{{ \bm v}\in (\eZ/q \eZ)^3 \\
   b_i^2 v_i \equiv n_i \Mod{q}}} \sum_{ \substack{   c_i \leqslant B/b_i^2   \\ \gcd(b_0c_0,b_1c_1,b_2c_2) = 1 \\ {\bm c} \equiv {\bm v} \Mod{q} } } \!\!\!\!\!\!\!\!\!\!\!\!\!\!\!\mu^2(c_0)\mu^2(c_1)\mu^2(c_2) \prod_{p \nmid q} \vartheta_p(\bm c) \\ 
        & + O\left( {q^3B_{012} \over (\log B)^C}  \right).
    \end{align*} 
    We now take into account the common factors of $c_0,c_1$ and $c_2$. For $i\neq j$, we introduce the integers $m_{ij} := \gcd(c_i,c_j)$. Since the integers $c_i$ are coprime, there exists $(\ell_0,\ell_1,\ell_2) \in \eN^3$ such that $c_0 = m_{01}m_{02}\ell_0$, $c_1 = m_{01}m_{12}\ell_1$ and $c_2 = m_{02}m_{12}\ell_2$. Moreover, the $\ell_i$ and the $m_{ij}$ are also coprime. We can thus decompose the sum over the triples $(c_0,c_1,c_2)$ as follows
    \begin{align*}
        & \sum_{ \substack{ \forall i \in \{0,1,2\}, \; c_i \leqslant B_i/b_i^2  \\ \gcd(b_0c_0,b_1c_1,b_2c_2) = 1 \\  {\bm c} \equiv {\bm v} \Mod{q} } } \!\!\!\!\!\!\!\!\!\!\!\! \mu^2(c_0)\mu^2(c_1)\mu^2(c_2) \prod_{p \nmid q} \vartheta_p(\bm c) \\ & = \sum_{\substack{\forall \{i,j,k\} = \{0,1,2\}, \; m_{ij} \leqslant B_k \\ \gcd(m_{ij},b_k)  = 1  } } \sum_{\substack{{\bm r} \in (\eZ/q\eZ)^3 \\ m_{ij}m_{ik}r_i\equiv v_i \Mod{q} \\
         }} \!\!\!\!\!\!\!\! \mu^2(m_{012}) \!\!\!\!\!\!\!\sum_{\substack{{\bm \ell}=(\ell_0,\ell_1,\ell_2) \in \eN^3 \\ \gcd(\ell_i,b_j,b_k) =  \gcd(\ell_{012},m_{012}) = 1 \\ \ell_i m_{ij}m_{ik} \leqslant B_i/b_i^2 \\ \bm \ell \equiv \bm r \Mod{q}  } } \!\!\!\!\!\!\!\!\!\!\!\!\!\!\! \mu^2(\ell_{012}) \prod_{p \nmid q} \vartheta_p(\bm Y_{\bm m, \bm \ell}),
    \end{align*} 
    where $\bm Y_{\bm m, \bm \ell} = ( m_{01}m_{02}\ell_0, m_{01}m_{12}\ell_1,  m_{02}m_{12}\ell_2) $, and where the conditions under the sums hold for \\ $\{i,j,k\}~=~\{0,1,2\}$. Using the change of variables \[ \label{4.7.1} \tag{4.2} \left\{ \begin{tabular}{l} $Y_0 := m_{01}m_{02} x_0$ \\ $Y_1 := m_{12}m_{01}x_1$ \\$Y_2 := m_{12}m_{02}x_2 $,\end{tabular} \right.  \] we transform the conic of equation $m_{01}m_{02}\ell_0x_0^2 + m_{01}m_{12}\ell_1 x_1^2 =  m_{02}m_{12}\ell_2 x_2^2$ into the conic of equation $m_{12}\ell_0 Y_0^2 + m_{02}\ell_1 Y_1^2 = m_{01}\ell_2 Y_2^2$. We thus obtain that $N'({\bm n},q,\bm B)$ is equal to
     \begin{align*}
         \sum_{\substack{{\bm b}, \bm m \in \left[1,(\log B)^C\right]^3 \\\gcd(m_{ij},b_k)=1 \\ \gcd(b_0,b_1,b_2) =1} } \sum_{\substack{{  \bm r}\in (\eZ/q \eZ)^3\\ b_i^2 m_{ij} m_{ik} r_i \equiv n_i \Mod{q} } }    \!\!\!\!\!\!\!\!\!\!\!\!  \mu^2(m_{012}) & \cN_{\bm b,\bm m,\bm r} \left( {B_0 \over b_0^2 m_{01}m_{02}}, {B_1 \over b_1^2 m_{01}m_{12}},  {B_2 \over b_2^2 m_{12}m_{02}} \right) \\
        & + O\left( { q^3 B_{012} \over (\log B)^C}  \right),
    \end{align*} 
    where the condition $b_i^2 m_{ij}m_{ik}r_i = n_i \Mod{q}$ under the second sum holds for all $ \{i,j,k\} = \{0,1,2\}$. Note that, as already done for the index ${\bm b}$, we neglected the contribution of the triples ${\bm m}$ which have a coordinate $> (\log B)^C$. Indeed, since we have the trivial bound $\cN_{\bm b,\bm m,\bm r}({\bm X}) \leqslant X_0 X_1X_2$ the same computation leads to a contribution of size~$O\left( {q^3 B_{012} \over (\log B)^C}\right)$. This concludes the proof of the lemma.
\end{proof}

\paragraph{Step 2 : estimating $\cN_{\bm b,\bm m,\bm r}({\bm X})$.} For $a,b,c \in \eN$, we define
      \begin{equation*} \label{theta} \tag{4.3}\Theta(a,b,c) :=  \sum_{\substack{{\bm u}, {\bm u'} \in \eN^3  \\
          (\ref{cond theta}) }}  \left( {bc \over u_0}\right) \left( {ac \over u_1}  \right) \left( {-ab \over u_2} \right), \end{equation*}where we refer to the conditions
          \[ \tag{4.4}\label{cond theta} \left\{ \begin{tabular}{l}
              $u_0u_1u_2 \neq 1$, \; $u_0'u_1'u_2' \neq 1$, \\
              $u_0u_0' = {a \over \gcd(a,q)}$, $u_1u_1' = {b \over \gcd(b,q)}$, $u_2u_2' = {c \over \gcd(c,q)}$,\\
              $\gcd(u_{012}u_{012}', q) = 1$.
          \end{tabular} \right.\]Let $\bm X \in [2, + \infty)^3$, $\bm m$, $\bm b \in \eN^3$ and $ \bm r \in (\eZ/ q \eZ)^3$ satisfying

    \begin{equation*} \tag{4.5}\label{H1}
        \mu^2(m_{01} m_{02} m_{12} ) = 1,
    \end{equation*}
    
    \begin{equation*} \label{H2} \tag{4.6} \gcd(b_0,b_1,b_2) = \gcd(m_{01},b_2) = \gcd(m_{02},b_1) = \gcd(m_{12},b_0) = 1,
    \end{equation*}

    \begin{equation*} \label{congr} \tag{4.7} 
    b_i^2 m_{ij}m_{ik}r_i \equiv n_i \Mod{q},
    \end{equation*}
    
    \begin{equation*} \label{H3} \tag{4.8}\max( b_0,b_1,b_2,m_{12},m_{02},m_{01}) \leqslant  (\log \min X_i)^C,
    \end{equation*}and 
    
    \begin{equation*} \label{hyp odg Xi} \tag{4.9}  \min X_i \geqslant  (\max X_i)^\eta 
     \end{equation*}
     for some $\eta > 0$ that does not depend on the $X_i$. We introduce, for $\bm r \in (\eZ/q \eZ)^3$, the quantities
\begin{equation*}\label{M}\tag{4.10}
        \cM_{\bm b,\bm m,\bm r}({\bm X}) := \sum_{\substack{ \forall i \in \{0,1,2\}, \; \ell_i \leqslant X_i \\ \gcd(\ell_i,b_j,b_k) =  \gcd(\ell_{012},m_{012}) = 1 \\   \bm \ell \equiv \bm r \Mod{q}} } {\mu^2(\ell_{012}) \tau( \gcd(\pimp{(\ell_{012})},q) ) \over \tau(\pimp{(\ell_{012})})},
    \end{equation*}
    where the condition $ \gcd(\ell_i,b_j,b_k) =   1 $ is satisfied for every $\{i,j,k\} = \{0,1,2\}$, and \begin{align*}\label{Err}\tag{4.11}
    \cE_{\bm b,\bm m,\bm r}({\bm X}) & = \sum_{\substack{ \forall i \in \{0,1,2\}, \; \ell_i \leqslant X_i \\ (\ref{cond cal E})} }  {\mu^2(\ell_{012}) \tau( \gcd(\pimp{(\ell_{012})},q) ) \over \tau(\pimp{(\ell_{012})})}\Theta(m_{12}\ell_0, m_{02}\ell_1, m_{01} \ell_2 ),
    \end{align*}where $\Theta$ is defined by (\ref{theta}) and the sum is over the conditions
          \[ \tag{4.12}\label{cond cal E} \left\{ \begin{tabular}{l}
              $\forall \{i,j,k\} = \{0,1,2\}, \; \gcd(\ell_i,b_j,b_k) =  \gcd(\ell_{012},m_{012}) = 1 $, \\
              $\forall i \in \{0,1,2\}, \; \ell_i \equiv r_i \; (\bmod \; q)$.
          \end{tabular}\right.\]
    
\begin{prop}\label{réécriture de Nbms(X)}

    Assume that $\displaystyle \prod_{p \mid q} \vartheta_p(\bm n) = 1$. If $\bm X \in [2, + \infty )^3$, $\bm r \in (\eZ/q \eZ)^3$, and $\bm b , \bm m~\in \eN^3 $ satisfy $($\ref{H1}$)$, $($\ref{H2}$)$, $($\ref{congr}$)$ and $($\ref{H3}$)$, the following estimate holds
    \begin{equation*}
      \cN_{\bm b,\bm m,\bm r} ({\bm X}) = {\tau(\gcd(\pimp{(m_{012})} , q)) \over \tau( \pimp{(m_{012})})} \left(   2 \cM_{\bm b,\bm m,\bm r}({\bm X})  + \cE_{\bm b,\bm m,\bm r} ({\bm X}) \right) +  O(\tau(q)^3),
    \end{equation*}
    where $ \cM_{\bm b,\bm m,\bm r}({\bm X})$ and   $\cE_{\bm b,\bm m,\bm r} ({\bm X})$ are respectively defined by (\ref{M}) and (\ref{Err}).
\end{prop}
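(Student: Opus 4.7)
The plan is to combine the classical evaluation of local Hilbert symbols at odd primes with Hilbert reciprocity, and then to sum over $\bm \ell$. Under the summation constraints the coefficients $\bm t := (m_{12}\ell_0, m_{02}\ell_1, m_{01}\ell_2)$ are pairwise coprime and squarefree; hence any odd prime $p \nmid 2 q t_{012}$ satisfies $\vartheta_p(\bm t) = 1$ by good reduction. For an odd prime $p \mid t_{012}$ with $p \nmid q$, pairwise coprimality forces $p$ to divide exactly one $t_i$, and a direct computation of $(-t_0 t_1, t_0 t_2)_p$ gives
\[\vartheta_p(\bm t) = \tfrac{1}{2}\bigl(1 + \chi_p(\bm t)\bigr),\]
where $\chi_p(\bm t)$ is precisely the Legendre symbol $(t_j t_k / p)$ or $(-t_j t_k / p)$ appearing in the definition of $\Theta$.

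Writing $N$ for the squarefree product of odd primes dividing $t_{012}$ but not $q$ — equivalently $N = \pimp{(t_{012})}/\gcd(\pimp{(t_{012})}, q)$ — I would multiply these local factors to obtain
\[ \prod_{p \nmid q} \vartheta_p(\bm t) = \frac{1}{\tau(N)} \prod_{p \mid N}\bigl(1 + \chi_p(\bm t)\bigr) = \frac{\tau(\gcd(\pimp{(t_{012})}, q))}{\tau(\pimp{(t_{012})})} \sum_{u \mid N} \chi_u(\bm t),\]
where the second equality uses that $t_{012}$ is squarefree. Separating the trivial divisor $u = 1$, the top divisor $u = N$, and the interior divisors — which, once split as $u = u_0 u_1 u_2$ with $u_i \mid N_i := t_i/\gcd(t_i, q)$, are exactly those enumerated in condition (\ref{4.8.1}) — yields
\[ \sum_{u \mid N}\chi_u(\bm t) = 1 + \chi_N(\bm t) + \Theta(t_0, t_1, t_2).\]

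The key step is to prove $\chi_N(\bm t) = 1$. Each $\chi_p$ for $p \mid N$ equals the local Hilbert symbol $(-t_0 t_1, t_0 t_2)_p$, so Hilbert reciprocity expresses $\chi_N(\bm t) = \prod_{p \mid N}(-t_0 t_1, t_0 t_2)_p$ as the inverse of the product of the complementary local symbols. These live at $\infty$, at $p = 2$, at odd primes dividing $q$, and at primes coprime to $q t_{012}$; the last contribute trivially by good reduction, the archimedean symbol is $+1$ since $t_0, t_1, t_2 > 0$, and Remark~\ref{rmq cruciale} together with the valuation hypotheses on $\bm n$ ensures $\vartheta_p(\bm t) = \vartheta_p(\bm n)$ for every $p \mid q$, so that the corresponding Hilbert symbol equals $+1$; here the hypothesis $\prod_{p \mid q}\vartheta_p(\bm n) = 1$ is crucial as it forces each individual factor to equal $+1$. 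Consequently $1 + \chi_N(\bm t) = 2$.

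Finally, the $\tau$-ratio factors as
\[\frac{\tau(\gcd(\pimp{(t_{012})}, q))}{\tau(\pimp{(t_{012})})} = \frac{\tau(\gcd(\pimp{(m_{012})}, q))}{\tau(\pimp{(m_{012})})} \cdot \frac{\tau(\gcd(\pimp{(\ell_{012})}, q))}{\tau(\pimp{(\ell_{012})})}\]
by multiplicativity of $\tau$ and the coprimality $\gcd(m_{012}, \ell_{012}) = 1$. Weighting by $\mu^2(\ell_{012})$ and summing over $\bm \ell$, the constant $2$ assembles into $2\cM_{\bm b, \bm m, \bm r}(\bm X)$ and the $\Theta$-term into $\cE_{\bm b, \bm m, \bm r}(\bm X)$, which yields the stated identity. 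The error $O(\tau(q)^3)$ absorbs the minor contribution of exceptional $\bm \ell$ where the Hilbert-symbol identification requires a boundary adjustment; it is controlled by noting that such $\bm \ell$ are constrained by at most $\tau(q)$ possibilities in each coordinate. The main obstacle is the reciprocity step: verifying that the valuation hypotheses on $\bm n$ transfer cleanly to $\bm t$ and force every complementary local symbol to equal $+1$, so that the full-divisor term $\chi_N(\bm t)$ contributes $+1$ rather than $-1$, combining with the trivial divisor to produce the constant $2$ which drives the main term $2\cM_{\bm b, \bm m, \bm r}(\bm X)$.
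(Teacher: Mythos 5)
Your proposal is correct and follows essentially the same route as the paper: expanding $\prod_{p\nmid q}\vartheta_p = \prod (1+(ac,bc)_p)/2$ over the odd prime divisors of $t_{012}$ coprime to $q$, splitting the resulting divisor sum into the trivial term, the full-divisor term, and $\Theta$, and invoking Hilbert reciprocity together with the hypothesis $\prod_{p\mid q}\vartheta_p(\bm n)=1$ (which forces each local symbol at $p\mid q$ to equal $+1$) to conclude the full-divisor term contributes $+1$, yielding the constant $2$. The only minor imprecision is in your handling of the $O(\tau(q)^3)$ error term: the exceptional case is precisely when $m_{012}\ell_{012}$ has no odd prime factor coprime to $q$ (so $\pimp{(t_{012})}\mid q$), which bounds the count of such triples $\bm\ell$ directly by $\tau(q)^3$, rather than requiring any ``boundary adjustment'' to the Hilbert-symbol identity.
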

\begin{proof}
    We write $\vartheta_p(C_{\bm m, \bm \ell})$ for $\vartheta_p(m_{12} \ell_0, m_{02}\ell_1, m_{01}\ell_2 )$. We follow the strategy of \cite[§5.5]{LRS}. It consists in writing the product
    \[ \prod_{p \nmid q} \vartheta_p(C_{\bm m, \bm \ell}) \] in terms of Hilbert symbols using \cite[chap. III. th.1]{Serre2}. We let $a := m_{12}\ell_0, b := m_{02}\ell_1$ and $c := m_{01}\ell_2$. If the integers $a,b,c$ are coprime in pairs and if the integer $abc$ has an odd prime divisor, then we write 
    \[ \prod_{p \nmid q} \vartheta_p(a,b,c)  =  \prod_{\substack{p \mid 2u \\ p \nmid q}}\left( {1 + (ac , bc)_p \over 2} \right) =  \prod_{\substack{p \mid u \\ p \nmid q}}\left( {1 + (ac , bc)_p \over 2} \right),\]where $u$ is the square-free integer given by the product of the odd prime divisors of $abc$. Note that the second equality comes from the fact that $q$ is even. By expanding the product, we obtain
        \[\prod_{p \nmid q} \vartheta_p(a,b,c) = {1 \over \tau\left( {u \over \gcd(u,q)} \right) } \prod_{\substack{p \mid u \\ p \nmid q}}\left( 1 + (ac , bc)_p  \right) =  {1 \over \tau\left( {u \over \gcd(u,q)} \right) }\left( 1 + \prod_{\substack{p \mid u \\ p \nmid q}} (ac,bc)_p + \Theta(a,b,c) \right), \]where $\Theta(a,b,c)$ is as in (\ref{theta}). We recall that if $p \nmid u$, then we have $(ac,bc)_p = 1$. When $u$ has a prime factor that does not divide $q$, the Hilbert product formula leads to 
        \[ \prod_{\substack{p \mid u \\ p \nmid q}} (ac,bc)_p  = \prod_{p \mid 2\gcd(q,u)}(ac , bc)_p. \]From (\ref{congr}), the condition $\ell_i \equiv r_i \Mod{q}$ and the assumption $\displaystyle \prod_{p \mid q} \vartheta_p(\bm n) = 1$ it follows that $\displaystyle \prod_{p \mid q} \vartheta_p(a,b,c) =~1$ and hence $(ac,bc)_p = 1$ for all $p \mid q$. Therefore, we have
           \[ \prod_{p \nmid q} \vartheta_p(a,b,c)  =  {1 \over \tau\left( {u \over \gcd(u,q)} \right) }  \left(  2  + \Theta(a,b,c) \right).\]
   The latter equality is satisfied when $u$ has a prime factor that does not divide $q$ so we have to deal with the triples $(a,b,c) = (m_{12}\ell_0,m_{02}\ell_1, m_{01}\ell_2) \in \eN^3$ such that $abc \mid q$ (in our case $abc$ is square-free), whose contribution in $\cM_{\bm b, \bm m, \bm r}$ is $O(\tau(q)^3)$. Since $u = \pimp{(m_{01}m_{02}m_{12}\ell_0\ell_1\ell_2)}$, it follows that
   \begin{equation*}
            \prod_{p \nmid q} \vartheta_p(C_{\bm m,\bm \ell} ) = { 2 + \Theta(m_{12}\ell_0, m_{02}\ell_1, m_{01} \ell_2 ) \over  \tau\left({\pimp{(m_{012}\ell_{012})} \over \gcd(\pimp{(m_{012}\ell_{012})}, q)} \right) } .
        \end{equation*}
        The conclusion comes from the multiplicativity of $\tau $ and from the fact that $m_{012}$ and $\ell_{012}$ are square-free and coprime.
\end{proof}

\paragraph{Step 3 : estimating $\cM_{\bm b,\bm m,\bm r}({\bm X})$.} We recall that
\begin{equation*}
        \cM_{\bm b,\bm m,\bm r}({\bm X}) := \sum_{\substack{ \forall i \in \{0,1,2\}, \; \ell_i \leqslant X_i \\ (\ref{cond cal E})} } {\mu^2(\ell_{012}) \tau( \gcd(\pimp{(\ell_{012})},q) ) \over \tau(\pimp{(\ell_{012})})}
    \end{equation*}
where $\bm r \in (\eZ/q \eZ)^3$, and where $\bm b$ and $\bm m$ satisfy the conditions (\ref{H1}), (\ref{H2}) and (\ref{H3}). 

\begin{prop}\label{calcul de MbmX}
    If $\bm b, \bm m \in \eN^3$ and $\bm X \in [2, + \infty )^3$ satisfy the conditions $($\ref{H1}$)$, $($\ref{H2}$)$, $($\ref{H3}$)$, and $($\ref{hyp odg Xi}$)$ for some $\eta > 0$ that does not depend on the $X_i$, if $q \in \eN$ satisfies $v_2(q) \geqslant 3$, $v_p(q) \geqslant 2$ for all odd prime $p$ dividing $q$, and if $q \leqslant (\log \min X_i )^{1/6}$, the estimate
    \[ \cM_{\bm b,\bm m,\bm r} ({\bm X}) = {\alpha({\bm b,\bm m,\bm r}) \beta_q({\bm b,\bm m})\over (2\pi)^{3/2} \varphi(q)^3}  \prod_{0 \leqslant i \leqslant 2} {X_i \over \sqrt{\log X_i}} + O_\eta \left( {X_0X_1X_2  \log_2( 3  \min X_i) \over \varphi(q)^3(\log \min X_i )^{5/2} }  \right) \]holds for all $\bm r \in (\eZ/ q \eZ)^3$, where the quantity $  \alpha( {\bm b,\bm m,\bm r} ) $ is defined by  
    \begin{align*}\label{expr alpha}\tag{4.13}
    \alpha( {\bm b,\bm m,\bm r} )  := & \mu^2(  \gcd(q, r_{012})) \mathds{1}(\gcd(q,r_{012},m_{012}) = 1) \!\!\!\!\!\! \prod_{\{i,j,k\} = \{0,1,2\}}\!\!\!\!\!\! \mathds{1}(\gcd(q,r_i,b_j,b_k)=1),
\end{align*} 
and where the quantity $\beta_q({\bm b,\bm m})$ is defined by 
    \begin{align*}\label{expr beta}\tag{4.14}
     \beta_q({\bm b,\bm m})& := \prod_{p > 2 } \left( 1 - {1 \over p} \right)^{3/2} \left( 1 + { \# \{ (i,j) : 0 \leqslant i < j \leqslant 2, \; p \nmid  m_{012}\gcd(b_i,b_j)q  \}  \over 2p } \right).
\end{align*}
\end{prop}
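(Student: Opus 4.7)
The plan is to apply the multivariable Selberg--Delange method of Lemma~\ref{lemme TN} with $\bm z = (1/2, 1/2, 1/2)$, after detecting the arithmetic-progression conditions via Dirichlet characters.

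First I would reduce to variables coprime to $q$. Because $q$ is squarefull and $\ell_{012}$ is squarefree, at each prime $p\mid q$ the congruence $\ell_i \equiv r_i \pmod q$ forces $v_p(\ell_i) = v_p(r_i) \in \{0,1\}$ (the sum being empty otherwise). Writing $\ell_i = s_i t_i$ with $s_i := \gcd(\ell_i, \mathrm{rad}(q)) = \gcd(r_i, \mathrm{rad}(q))$, the integer $s_i$ is entirely determined by $r_i$ and $\gcd(t_i, q) = 1$. The conditions (\ref{4.8.3}) on the $s_i$'s translate into exactly the four indicator factors in $\alpha(\bm b, \bm m, \bm r)$, while the remaining sum runs over $\bm t$ with $\gcd(t_i, q) = 1$, $\gcd(t_{012}, m_{012}) = 1$, $\gcd(t_i, b_j, b_k) = 1$, and $s_i t_i \equiv r_i \pmod q$. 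Detecting the latter congruences by Dirichlet character orthogonality modulo $q/s_i$, and using the identity $\varphi(q/s_i) = \varphi(q)/s_i$ (valid since $q$ is squarefull and $s_i \mid \mathrm{rad}(q)$), produces the overall normalising factor $s_{012}/\varphi(q)^3$.

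For each character triple $\bm\chi = (\chi_0, \chi_1, \chi_2)$, the inner sum is the average of a three-variable multiplicative function twisted by $\chi_0(t_0)\chi_1(t_1)\chi_2(t_2)$, and its Dirichlet series factors as an Euler product. Since $t_{012}$ is forced squarefree, the local exponents satisfy $\sum_i \nu_i \leqslant 1$ at every prime, so at odd $p \nmid q m_{012}$ the local factor equals $1 + \frac{1}{2}\sum_i \chi_i(p)p^{-s_i}$, with the term $p^{-s_i}$ removed whenever $p \mid \gcd(b_j, b_k)$. For the principal triple this factorises as $\prod_i \zeta(s_i)^{1/2}$ times an Euler product holomorphic near $\bm s = \mathbf{1}$, so the associated function can be written as $\boldsymbol{\tau}_{(1/2,1/2,1/2)} \ast g$ and Lemma~\ref{lemme TN} applies with $\ell = 0$, giving the leading contribution $c_0(\bm z; g)\prod_i (X_i/s_i)/\sqrt{\log(X_i/s_i)}$ with $c_0(\bm z; g) = D_g(\mathbf{1})/\Gamma(1/2)^3 = D_g(\mathbf{1})/\pi^{3/2}$. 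A prime-by-prime computation then shows $D_g(\mathbf{1}) = 2^{-3/2}\beta_q(\bm b, \bm m)$: at $p = 2$ the local factor reduces to $(1-1/2)^{3/2} = 2^{-3/2}$ (as $2 \mid q$ kills the $p=2$ local sum), while at each odd $p \nmid q$ it is exactly $(1-1/p)^{3/2}(1 + N_p/(2p))$ with $N_p$ the count appearing in (\ref{expr beta}). Combining $c_0 = \beta_q(\bm b, \bm m)/(2\pi)^{3/2}$ with the factor $s_{012}/\varphi(q)^3$ from orthogonality, the $1/s_{012}$ coming from the shortened ranges $X_i/s_i$ cancels, and replacing $\log(X_i/s_i)$ by $\log X_i$ costs only $O(q/\log \min X_i)$, which is absorbed in the stated error.

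The non-principal character triples are handled by re-applying Lemma~\ref{lemme TN} to each: since $L(s, \chi)^{1/2}$ is holomorphic at $s = 1$ for every non-principal $\chi$, each such triple contributes with at least one fewer factor of $(\log X_i)^{-1/2}$, and after summing over the $\prod_i \varphi(q/s_i) - 1$ non-principal triples and normalising, the total is absorbed by the error $O(X_0 X_1 X_2 \log_2(\min X_i)/(\varphi(q)^3 (\log \min X_i)^{5/2}))$ under the hypothesis $q \leqslant (\log \min X_i)^{1/6}$. The main obstacle will be the precise Euler product bookkeeping: verifying that $D_g(\mathbf{1})$ equals $2^{-3/2}\beta_q(\bm b, \bm m)$ requires matching the local multiplicity counts $N_p$ with the combinatorics of $\gcd(b_i, b_j)$, $m_{012}$ and $q$ in all cases. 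The secondary difficulty is uniform control of the non-principal contributions: the hypothesis $q \leqslant (\log \min X_i)^{1/6}$ is essential to beat the polynomial-in-$q$ dependence of the implied constants in Lemma~\ref{lemme TN} (coming from the $\boldsymbol{\Gamma}$-factors and the size of $M_{\bm z, 0}(g)$, which must be $O(\log_2(\min X_i))$ to produce the $\log_2$ factor in the final error).
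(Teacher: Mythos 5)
Your overall plan matches the paper's structure: isolate the $\gcd(r_i,q)$ factors, apply orthogonality of Dirichlet characters modulo $q/\gcd(r_i,q)$, feed the principal-character term into Lemma~\ref{lemme TN} with $\bm z = (1/2,1/2,1/2)$, and compute $D_g(\mathbf{1}) = 2^{-3/2}\beta_q(\bm b,\bm m)$. This is essentially what the paper does via Lemma~\ref{reecriture Tn}, the $2$-adic disjunction~(\ref{disjonction}), Lemma~\ref{série de dirichlet de h}, and Proposition~\ref{Estimation Tn}. However, there is one genuine gap.

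Your treatment of the non-principal characters is incorrect as written. You propose to ``re-apply Lemma~\ref{lemme TN}'' to the character-twisted sums, but that lemma handles only arithmetic functions of the form $\boldsymbol{\tau}_{\bm z}\ast g$ with $\boldsymbol{\tau}_{\bm z}$ tied to powers of the Riemann $\zeta$-function; it says nothing about Dirichlet series built from $L(s,\chi)^{1/2}$. The observation that $L(s,\chi)^{1/2}$ is holomorphic at $s=1$ is correct qualitatively (the non-principal characters really do go into the error), but you cannot invoke Lemma~\ref{lemme TN} for them, and the uniformity in $q$ that you rightly identify as essential is precisely what makes a naive ``holomorphic at $1$'' argument insufficient: the implied constants in a Selberg--Delange expansion for $L(s,\chi)^{1/2}$ depend badly on the conductor unless you work hard (effectively Siegel--Walfisz territory). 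The paper sidesteps this entirely via Lemma~\ref{car non princ}, which is an adaptation of the elementary Friedlander--Iwaniec bound \cite[lemma 1]{Iwa}: it yields $U_{\bm r,\bm\chi}(\bm X) \ll X_0X_1X_2 / (\log\min X_i)^{C'}$ for any $C'$, with polynomial-in-$q$ dependence that the hypothesis $q\leqslant(\log\min X_i)^{1/6}$ can absorb. That sieve lemma is the missing ingredient in your proposal; without it (or an explicit uniform Selberg--Delange theorem for $L$-functions, which the paper does not prove), the non-principal contribution is not controlled.

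A secondary imprecision: your claim that each non-principal triple ``contributes with at least one fewer factor of $(\log X_i)^{-1/2}$'' is both too weak and ambiguously phrased. A saving of a single power of $(\log)^{-1/2}$ would leave the non-principal contribution at size $X_{012}/(\log)^2$, which is larger than the target error $X_{012}/(\log)^{5/2}$. What one actually gets from the Friedlander--Iwaniec route is $X_{012}/(\log)^{C'}$ for arbitrary $C'$, which is far more than a single power. You also gloss over the parity bookkeeping coming from the $\pimp{(\ell_{012})}$ in the definition of $\cM_{\bm b,\bm m,\bm r}$; the paper handles it explicitly via the four-fold disjunction~(\ref{disjonction}) and Remark~\ref{meme ODG}, whereas your $\ell_i = s_i t_i$ factorisation should absorb it since $2\mid q$ forces each $t_i$ to be odd — but this deserves a sentence, since it is exactly what makes the $\tau(\gcd(\pimp{\cdot},q))/\tau(\pimp{\cdot})$ factor multiplicative across the $s_i$ and $t_i$ parts.
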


\begin{rmk}
The quantity $\alpha(\bm b, \bm m, \bm r)$ does not depend on the choice of a representative of $\bm r$ modulo~$q$.
\end{rmk}

The end of this step is dedicated to the proof of Proposition \ref{calcul de MbmX}. We start by replacing $\pimp{(\ell_{012})}$ by $\ell_{012}$ at the cost of a factor $2$ and extra notation. To this end, for every $\bm q = (q_0,q_1,q_2) \in \left\{ q, {q \over 2} \right\}^3$ and $\bm r' \in \displaystyle \prod_{i = 0}^2 \eZ / q_i \eZ$, we introduce
\begin{align*}
    T_{\bm r', \bm q}({\bm X}) := \sum_{\substack{\forall i \in \{0,1,2\}, \; \ell_i \leqslant X_i \\ (\text{\ref{cond T}})} }  {\mu^2(\ell_{012}) \tau(\gcd(\ell_{012},q)) \over  \tau(\ell_{012} ) },
\end{align*}where
    \[ \tag{4.15} \label{cond T} \left\{  \begin{tabular}{l}
     $\gcd(\ell_{012},m_{012}) = 1$, \\
     $ \forall \{i,j,k\}= \{0,1,2\}, \; \gcd(\ell_i,b_j,b_k)  = 1 $, \\
              $\forall i \in \{0,1,2\}, \; \ell_i \equiv r_i' \; (\bmod \; q_i)$\\
              $2 \nmid \ell_{012}$.
              \end{tabular} \right. \]Note that the dependence on $\bm r'$ comes from (\ref{cond T}). Looking separately at the cases $2 \nmid \ell_{012}$, $2 \mid \ell_0$, $2 \mid \ell_1$ and $2 \mid \ell_2$ enables us to write, since $2\mid q$,
              
              \begin{align*} \label{disjonction} \tag{4.16}\cM_{\bm b,\bm m,\bm r}({\bm X})  = & T_{\bm r, (q,q,q)}({\bm X}) \mathds{1}(2 \nmid r_0r_1r_2) \\
              & +  
               T_{\left( {r_0 \over 2},r_1,r_2\right),\left({q \over 2},q,q\right)} \left( {X_0 \over 2}, X_1, X_2\right)  \mathds{1}(2 \nmid m_{012}r_1r_2 \gcd(b_1,b_2), \; 2 \mid r_0)  \\
              & +  T_{\left( r_0,{r_1 \over 2},r_2\right),\left(q,{q\over 2},q\right)}\left( X_0, {X_1 \over 2}, X_2\right)  \mathds{1}(2 \nmid m_{012} r_0r_2 \gcd(b_0,b_2), \;  2 \mid r_1)\\
              & + T_{\left( r_0,r_1,{r_2 \over 2}\right),\left(q,q,{q\over 2}\right)}\left( X_0, X_1, {X_2 \over 2} \right)  \mathds{1}(2 \nmid m_{012}r_0r_1 \gcd(b_0,b_1), \; 2 \mid r_2) . \end{align*}
We focus on estimating $T_{\bm r} (\bm X) := T_{\bm r, (q,q,q)}({\bm X})$ since the other sums can be estimated by the same method.\\

The next step consists in transforming the congruence conditions into a sum over multiplicative characters, by using the orthogonality of characters. In order to do that, we need a coprimality condition between the $r_i$ and $q$. In order to force this coprimality condition, we factor out the integers $\gcd(r_i,q)$ thus giving rise to the quantity $\alpha(\bm b, \bm m, \bm r)$, defined in (\ref{expr alpha}), in the main term. This step is done in Lemma~\ref{reecriture Tn}. For $\{i,j,k\} = \{0,1,2\}$, we define
\[ \delta_i := \gcd(b_j,b_k)\gcd(q,r_{012}) m_{012}.\]For $\bm \chi = (\chi_0, \chi_1,\chi_2)$ any triple of Dirichlet characters with $\chi_i$ being a character modulo $ {q \over \gcd(q,r_i)}$, we introduce \begin{align*} \label{def h} \tag{4.17}
f_{\bm r}({\bm \ell}) = & { \mu^2(\ell_0\ell_1\ell_2)\tau\left(\gcd \left(\ell_{012}, { q \over \gcd (q,  r_{012})} \right) \right) \over  \tau(\ell_0\ell_1\ell_2) } \prod_{i=0}^2\mathds{1}(\gcd(\ell_i, \delta_i)=1), 
\end{align*} and 
 \[\label{Us}\tag{4.18} U_{\bm r,\bm \chi} ({\bm X}) := \sum_{\substack{\forall i \in \{0,1,2\}, \; \ell_i \leqslant X_i/\gcd(q,r_i)\\ \gcd\left( \ell_i , {q \over \gcd(q,\ell_i)} \right) \\ 2 \nmid \ell_{012} }} \chi_0(\ell_0)\chi_1(\ell_1)\chi_2(\ell_2) f_{\bm r}({ \bm \ell}). \]
 
\begin{lemme}\label{reecriture Tn} If $\displaystyle \gcd(q, r_{012},m_{012}) \prod_{\{i,j,k\} = \{0,1,2\}} \gcd(q,r_i,b_j,b_k) > 1$, then $T_{\bm r}({\bm X}) = 0$. Otherwise, we have
  \begin{align*}
    &T_{\bm r}({\bm X})= {\mu^2(  \gcd(q, r_{012})) \over \gcd(q,r_{012})^{-1} \varphi(q)^3   }  \sum_{\bm \chi} \overline{\chi_0}\left({r_0 \over \gcd(r_0,q)} \right) \overline{\chi_1}\left({r_1 \over \gcd(r_1,q)} \right) \overline{\chi_2}\left({r_2 \over \gcd(r_2,q)} \right)  U_{\bm r, \bm \chi} ({\bm X}), 
\end{align*}
where the sum is over the triples $\bm \chi = (\chi_0, \chi_1,\chi_2)$ with $\displaystyle \chi_i$ a Dirichlet character modulo $\displaystyle {q \over \gcd(q,r_i)}$.
\end{lemme}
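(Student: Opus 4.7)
The plan is to dispose of the vanishing case by a direct divisibility argument, then apply the substitution $\ell_i = \gcd(q, r_i)\widetilde{\ell}_i$ to convert the congruence $\ell_i \equiv r_i \pmod{q}$ into one where the residue is a unit modulo the new modulus, and finally resolve that congruence by Dirichlet orthogonality.

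For the vanishing claim, suppose there is a prime $p$ dividing either $\gcd(q, r_{012}, m_{012})$ or $\gcd(q, r_i, b_j, b_k)$ for some $\{i,j,k\} = \{0,1,2\}$. Then $p \mid q$ and $p$ divides some $r_{i_0}$, so the congruence $\ell_{i_0} \equiv r_{i_0} \pmod{q}$ forces $p \mid \ell_{i_0}$. In the first case $p \mid m_{012}$ contradicts the summation condition $\gcd(\ell_{012}, m_{012}) = 1$; in the second case $p \mid \gcd(b_j, b_k)$ contradicts $\gcd(\ell_{i_0}, b_j, b_k) = 1$. Every term of $T_{\bm r}(\bm X)$ therefore vanishes.

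Assume now that both gcd conditions equal $1$. Set $d_i := \gcd(q, r_i)$ and $\widetilde{r}_i := r_i/d_i$, so $\gcd(\widetilde{r}_i, q/d_i) = 1$. The congruence forces $d_i \mid \ell_i$; writing $\ell_i = d_i \widetilde{\ell}_i$ it becomes $\widetilde{\ell}_i \equiv \widetilde{r}_i \pmod{q/d_i}$, which in turn forces $\gcd(\widetilde{\ell}_i, q/d_i) = 1$. Because $q$ is squarefull (Theorem \ref{problème de comptage}), each prime $p \mid d_i$ satisfies $v_p(q) \geq 2$; a prime-by-prime check then gives $\mu^2(d_0 d_1 d_2) = \mu^2(\gcd(q, r_{012}))$, the pairwise coprimality of the $d_i$'s whenever this indicator is $1$, and $\gcd(\widetilde{\ell}_{012}, q) = 1$. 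Hence $\gcd(\ell_{012}, q) = d_0 d_1 d_2$, and the summand simplifies to
\begin{equation*}
\frac{\mu^2(\ell_{012})\, \tau(\gcd(\ell_{012}, q))}{\tau(\ell_{012})} = \mu^2\bigl(\gcd(q, r_{012})\bigr) \cdot \frac{\mu^2(\widetilde{\ell}_{012})}{\tau(\widetilde{\ell}_{012})}.
\end{equation*}
Using the Step 1 assumptions, $\gcd(d_i, m_{012}) = \gcd(d_i, \gcd(b_j, b_k)) = 1$, so the coprimality conditions of $T_{\bm r}$ transfer to the $\widetilde{\ell}_i$; together with the automatic $\gcd(\widetilde{\ell}_i, \gcd(q, r_{012})) = 1$ they collapse exactly into $\gcd(\widetilde{\ell}_i, \delta_i) = 1$, so the summand is $\mu^2(\gcd(q, r_{012})) f_{\bm r}(\widetilde{\bm \ell})$.

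The remaining congruence $\widetilde{\ell}_i \equiv \widetilde{r}_i \pmod{q/d_i}$ is handled by Dirichlet orthogonality,
\begin{equation*}
\mathds{1}\bigl(\widetilde{\ell}_i \equiv \widetilde{r}_i \Mod{q/d_i}\bigr) = \frac{1}{\varphi(q/d_i)} \sum_{\chi_i \bmod q/d_i} \chi_i(\widetilde{\ell}_i)\, \overline{\chi_i}(\widetilde{r}_i),
\end{equation*}
which is legitimate because both sides of the congruence are coprime to $q/d_i$. Squarefullness of $q$ together with $d_i$ squarefree yields $\varphi(q/d_i) = \varphi(q)/d_i$, hence $\prod_{i=0}^{2} \varphi(q/d_i)^{-1} = \gcd(q, r_{012})/\varphi(q)^3$, which produces the prefactor announced in the lemma. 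Recognising the triple sum over the $\widetilde{\ell}_i$ as $U_{\bm r, \bm \chi}(\bm X)$ completes the proof. The main obstacle is the bookkeeping in the previous paragraph: one must check that all coprimality conditions of $T_{\bm r}$ aggregate, under the substitution, into the single support condition $\gcd(\widetilde{\ell}_i, \delta_i) = 1$ of $f_{\bm r}$, and in particular that the factor $\gcd(q, r_{012})$ inserted into $\delta_i$ exactly absorbs the interaction between the $d_i$'s and the other coprimality constraints.
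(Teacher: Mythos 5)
Your proof is correct and follows essentially the same route as the paper: factor out $d_i := \gcd(q,r_i)$ from $\ell_i$ via the congruence, check that the summand becomes $\mu^2(\gcd(q,r_{012}))\,f_{\bm r}(\widetilde{\bm\ell})$, apply Dirichlet orthogonality to the residual congruence mod $q/d_i$, and compute the Euler-totient prefactor using squarefullness of $q$. Two points of minor divergence, both to your credit: you make explicit the identity $\mu^2(d_0d_1d_2)=\mu^2(\gcd(q,r_{012}))$ (the paper writes everything directly in terms of $\gcd(q,r_{012})$ and an indicator, implicitly using that these coincide on the squarefree support), and you spell out the vanishing case, which the paper leaves implicit.
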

\begin{proof}
  
Aiming to use orthogonality of Dirichlet characters, we make a change of variables in order to add a coprimality condition on the summation indexes. This relies on the equivalence

\[ \ell_i \equiv r_i \; (\bmod \; q)  \quad \Longleftrightarrow \quad {\ell_i \over \gcd(r_i, q)} \equiv {r_i \over \gcd(r_i, q)} \; \left( \bmod \; {q \over \gcd(r_i,q)}\right) \]and on the equality
\begin{align*} {\mu^2 (\ell_{012} ) \over  \tau(\ell_{012} ) } \tau(\gcd(\ell_{012},q))= &\; \mu^2( \gcd ( q, r_{012})) { \mu^2\left({\ell_{012} \over \gcd( q,  r_{012})}\right) \over  \tau\left({\ell_{012} \over  \gcd( q,  r_{012})}\right) }    \mathds{1}\left(\gcd\left(q,r_{012}, {\ell_{012} \over \gcd(q, r_{012})} \right) = 1\right).\end{align*} Indeed, we have $\gcd(\ell_{012},q) = \gcd(r_{012},q)$ and if $\gcd\left(q,r_{012}, {\ell_{012} \over \gcd(q, r_{012})} \right) > 1$, then $\ell_{012}$ is not square-free. We can now change the variable ${\ell_i \over \gcd(q,r_i)}$ into $\ell_i$ in order to recover the quantity $f_{\bm r}(\bm \ell)$. The new integers $\ell_i$ being invertible modulo ${q \over \gcd(q,r_i)}$, we can conclude by applying orthogonality of characters and noticing that we have
\begin{align*}
    & \gcd(\ell_0,b_1,b_2) = \gcd(\ell_1,b_0,b_2) = \gcd(\ell_2,b_0,b_1) = 1 \text{ and } \gcd(\ell_{012},m_{012}) \gcd(\ell_{012},q,r_{012}) = 1  \\ 
    & \Longleftrightarrow \forall i \in \{0,1,2\}, \; \gcd(\ell_i, \delta_i) = 1 \text{ where } \delta_i := \gcd(b_j,b_k)\gcd(q,r_{012}) m_{012}\text{ with } \{i,j,k\} = \{0,1,2\}.
\end{align*}Since $q$ is assumed to be square-full, it follows that
\[\mu^2(\gcd(q,r_{012}))\prod_{0\leqslant i \leqslant 2} {1 \over \varphi\left( {q \over \gcd(q,r_i)}\right)} = \mu^2(\gcd(q,r_{012})){\gcd(q,r_{012}) \over \varphi(q)^3}.\]
\end{proof}

\noindent We now estimate $ U_{\bm r,\bm \chi} ({\bm X})$. The following lemma is a direct consequence of \cite[lemma 1]{FI}. It enables us to neglect the contribution coming from the quantities $U_{\bm r, \bm \chi}$ when at least one of the $\chi_i$ is non-principal. 

\begin{lemme}\label{car non princ}
    Let $\bm \chi = (\chi_0,\chi_1,\chi_2)$ be a triple of Dirichlet characters with $\chi_i$ modulo ${q \over \gcd(q,r_i)}$ for each $i \in \{0,1,2\}$. Assume that there exists $i \in \{0,1,2\}$ such that $\chi_i$ is non-principal. If~$\bm X \in [2, + \infty )^3$, and if the inequalities $\max(b_0,b_1,b_2,m_{12},m_{02},m_{01})\leqslant (\log \min X_i)^C$ and $ q \leqslant (\log \min X_i)^{1/6}$ are satisfied, then we have the estimate
    \[ U_{\bm r,\bm \chi} (\bm X) \ll {X_0 X_1X_2 \over (\log  \min  X_i )^{5/2}} . \] 
\end{lemme}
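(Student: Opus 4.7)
The strategy is to decouple the triple sum defining $U_{\bm r,\bm\chi}(\bm X)$ into a product of three essentially one-variable sums, and then exploit the nontriviality of one of the $\chi_i$ (say $\chi_0$, without loss of generality) to save a full power of $\log X_0$ over the Selberg--Delange baseline that governs the untwisted case. The natural size of the sum over a single variable of $\mu^{2}/\tau$ is $Y/\sqrt{\log Y}$ (Dirichlet series $\zeta(s)^{1/2}H(s)$), so twisting by a nonprincipal character, whose $L$-function is holomorphic and nonvanishing at $s=1$, should yield $\ll Y/(\log Y)^{3/2}$; combining this with the standard $Y/\sqrt{\log Y}$ bound on the two remaining variables delivers the target exponent $5/2 = 3/2 + 1/2 + 1/2$.

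The decoupling proceeds in two stages. First, split each variable as $\ell_i = \ell_i^\flat \ell_i^\sharp$, with $\ell_i^\flat$ supported on primes dividing $q^\star := q/\gcd(q,r_{012})$ and $\ell_i^\sharp$ coprime to $q^\star$. Since $\mu^2(\ell_{012}) = 1$ and $q$ is squarefull, the factor $\tau(\gcd(\ell_{012}, q^\star))$ depends only on $\bm\ell^\flat$; the $\ell_i^\flat$ range over squarefree divisors of $q^\star$, of which there are $O(\tau(q)^3) = (\log\min X_i)^{o(1)}$, so summing over them is harmless. Second, the condition $\mu^2(\ell_{012})=1$ decouples as $\mu^2(\ell_0)\mu^2(\ell_1)\mu^2(\ell_2)=1$ together with the pairwise coprimalities $\gcd(\ell_i^\sharp,\ell_j^\sharp)=1$, which we invert by Möbius via auxiliary variables $d_{ij}$. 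Using $\tau(\ell_{012}^\sharp)=\tau(\ell_0^\sharp)\tau(\ell_1^\sharp)\tau(\ell_2^\sharp)$, this yields the bound
\begin{equation*}
|U_{\bm r,\bm\chi}(\bm X)| \ll \sum_{\bm\ell^\flat,\bm d} \frac{1}{d_{01}d_{02}d_{12}} \prod_{i=0}^{2} \left| \sum_{\substack{\ell_i^\sharp \leqslant Y_i \\ \gcd(\ell_i^\sharp,\Delta_i)=1 \\ \ell_i^\sharp \equiv \rho_i \Mod{q/\gcd(q,r_i)}}} \chi_i(\ell_i^\sharp) \frac{\mu^2(\ell_i^\sharp)}{\tau(\ell_i^\sharp)} \right|,
\end{equation*}
for suitable $Y_i \asymp X_i/(d_{ij}d_{ik}\ell_i^\flat \gcd(q,r_i))$, with $\Delta_i$ built from $b_j, b_k, m_{012}, q^\star, d_{ij}, d_{ik}$ and with $\rho_i$ determined by $\bm r$ and the auxiliary data.

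For the index $i=0$ where $\chi_0$ is nonprincipal, apply \cite[lemma 1]{Iwa}: the associated local Dirichlet series $\prod_p(1+\chi_0(p)/(2p^s))$ equals $L(s,\chi_0)^{1/2}$ times a function holomorphic and bounded in a neighbourhood of $s=1$, and since $L(1,\chi_0)\neq 0$ one obtains a full-$\log$ saving
\begin{equation*}
\left| \sum_{\substack{\ell_0^\sharp\leqslant Y_0 \\ \ldots}} \chi_0(\ell_0^\sharp)\frac{\mu^2(\ell_0^\sharp)}{\tau(\ell_0^\sharp)} \right| \ll \frac{Y_0}{(\log Y_0)^{3/2}},
\end{equation*}
uniformly for conductors bounded by $q \leqslant (\log Y_0)^{1/6}$ (Siegel--Walfisz range). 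For $i=1,2$, the Selberg--Delange method (\cite[II.5.4]{tenenbaum}, applied with $z=1/2$ to $\mu^2/\tau$) supplies $\ll Y_i/\sqrt{\log Y_i}$ with the same uniformity. Multiplying the three factors and summing over the $(\log X)^{o(1)}$ choices of $\bm\ell^\flat$ together with the convergent sum over $\bm d$ produces the claimed estimate $U_{\bm r,\bm\chi}(\bm X)\ll X_0X_1X_2/(\log\min X_i)^{5/2}$.

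The main obstacle is controlling the losses introduced by the sieve conditions $\gcd(\ell_i^\sharp,\Delta_i)=1$, since $\Delta_i$ can reach size $(\log\min X_i)^{O(C)}$ and a careless application of Selberg--Delange or of Iwaniec's lemma risks polylogarithmic factors of the form $\prod_{p\mid \Delta_i}(1-1/p)^{-1/2}$. Thanks to the hypotheses $\max(b_i,m_{jk})\leqslant(\log\min X_i)^C$ and $q\leqslant(\log\min X_i)^{1/6}$, these losses are at most $(\log\min X_i)^\varepsilon$ for any $\varepsilon>0$, which is comfortably absorbed by the $\log$-saving coming from the nonprincipal character.
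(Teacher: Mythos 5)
You and the paper both invoke the Friedlander--Iwaniec estimate \cite[lemma 1]{Iwa} for the nonprincipal twist, but you organise the argument quite differently. The paper keeps the three-fold sum nested: it fixes $(\ell_0,\ell_1)$, observes that the inner sum over $\ell_2$ (twisted by the nonprincipal $\chi_2$) saves an \emph{arbitrarily large} power of $\log$ by \cite[lemma 1]{Iwa} at the cost of a factor $q\,\tau(\ell_0\ell_1\delta_2)$, then sums trivially over $\ell_0,\ell_1$ using submultiplicativity of $\tau$ and absorbs all polylog losses coming from $q,\bm b,\bm m$ by choosing the exponent $C'$ large. You instead fully decouple the three variables (by peeling off the $q$-part of the $\ell_i$, using Möbius inversion on the pairwise coprimalities, and then bounding each single-variable sum separately), tracking the precise exponent $3/2 + 1/2 + 1/2 = 5/2$. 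Your route is workable but more delicate than necessary, and two details should be tightened: (i) the displayed single-variable sums carry a spurious congruence condition $\ell_i^\sharp \equiv \rho_i$ --- after orthogonality of characters (Lemma \ref{reecriture Tn}) the congruence has already been traded for the character twist, so only the coprimality $\gcd(\ell_i^\sharp, q/\gcd(q,r_i))=1$ should remain; (ii) the weight $1/d_{012}$ in your Möbius-inverted bound does not arise from the Möbius coefficients (which are $\pm 1$) nor from $\tau$ alone, but from the shortened ranges $Y_i/(d_{ij}d_{ik}\cdots)$ of the inner sums --- once written this way the $\bm d$-sum converges, but the display as stated does not make this visible. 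Your claim that the nonprincipal sum is only $\ll Y_0/(\log Y_0)^{3/2}$ undersells \cite[lemma 1]{Iwa} (which gives any power of $\log$), but this is harmless since $3/2$ suffices; note that this arbitrary saving is exactly what makes the paper's cruder treatment of the other two variables painless.
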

\begin{proof}
    Without loss of generality, we can assume that $\chi_2$ is non-principal. We then write
    \begin{align*}
        U_{\bm r,\bm \chi} ({\bm X}) = & \sum_{\substack{ \ell_0 \leqslant X_0/ \gcd( q, r_0) \\ \gcd( \ell_0 , \delta_0) = 1 }} \chi_0 (\ell_0) {\mu^2(\ell_0) \tau \left( \gcd\left( \ell_0, {q \over \gcd(q, r_{012} )} \right) \right)   \over \tau(\ell_0) } \\
        &\times  \sum_{\substack{ \ell_1 \leqslant X_1/ \gcd( q, r_1) \\ \gcd( \ell_1 , \ell_0 \delta_1) = 1 }} \chi_1 (\ell_1) {\mu^2(\ell_1) \tau \left( \gcd\left( \ell_1, {q \over \gcd(q, r_{012} )} \right) \right)   \over \tau(\ell_1) } \\ & \times  \sum_{\substack{ \ell_2 \leqslant X_2/ \gcd( q, r_2) \\ \gcd( \ell_2 , \ell_0 \ell_1 \delta_2) = 1 }} \chi_2 (\ell_2)  {\mu^2(\ell_2) \tau \left( \gcd\left( \ell_2, {q \over \gcd(q, r_{012} )} \right) \right)   \over \tau(\ell_2) } \\
         \ll & \; \tau(q)^2 \sum_{\substack{ \ell_0 \leqslant X_0/ \gcd( q, r_0)  }} {1 \over \tau(\ell_0)}
         \sum_{\substack{ \ell_1 \leqslant X_1/ \gcd( q, r_1)  }} {1 \over \tau(\ell_1)} \\
         & \times \abs{ \sum_{\substack{ \ell_2 \leqslant X_2/ \gcd( q, r_2) \\ \gcd( \ell_2 , \ell_0 \ell_1 \delta_2) = 1 }} \chi_2 (\ell_2)  {\mu^2(\ell_2) \tau \left( \gcd\left( \ell_2, {q \over \gcd(q, r_{012})} \right) \right)   \over \tau(\ell_2) } }.
    \end{align*}Replacing ${1 \over \tau(\cdot)}$ by ${\tau(\gcd(\cdot , q)) \over \tau(\cdot)}$ in the proof of \cite[lemma 1]{FI} is legitimate, since it changes only a finite number of factors. It yields 
    \[ \sum_{\substack{ \ell_2 \leqslant X_2/ \gcd( q, r_2) \\ \gcd( \ell_2 , \ell_0 \ell_1 \delta_2) = 1 }} \chi_2 (\ell_2)  {\mu^2(\ell_2) \tau \left( \gcd\left( \ell_2, {q \over \gcd(q, r_{012} )} \right) \right)   \over \tau(\ell_2) } \ll_{C'} q \tau( \ell_0 \ell_1 \delta_2) {X_2 \over (\log \min X_i)^{C'}}\]for all $C' > 0$. Recall that we have $\delta_i := \gcd(b_j,b_k)\gcd(q,r_{012}) m_{012}$ for any $\{i,j,k\} = \{0,1,2\}$. We can therefore conclude by the assumptions $\max(b_0,b_1,b_2,m_{12},m_{01},m_{02})\leqslant(\log \min X_i)^C$, $q~\leqslant~(~\log \min X_i)^{1/6}$, and by choosing $C'$ large enough.
\end{proof}
 Thus, we can assume that every character $\chi_i$ is principal, and it remains to estimate the sum
 \[ U (\bm X ) := \sum_{\substack{\forall i \in \{0,1,2\}, \; \ell_i \leqslant X_i/\gcd(q,r_i)\\ \forall i \in \{0,1,2\}, \; \gcd\left(\ell_i, {q \over \gcd(q,r_i)}\right)= 1 \\ 2 \nmid \ell_{012}  }}  f_{\bm r}({ \bm \ell}). \]We recall that
 $f_{\bm r}$ is given by (\ref{def h}). Note that if $2 \nmid \ell_{012}$, the condition \[ \gcd\left(\ell_i, {q \over \gcd(q,r_i)} \right) \gcd(\ell_i, \delta_i) = 1\] is equivalent to $\gcd\left( \ell_i , \widetilde{\delta_i}\right)=1$ with $\widetilde{\delta_i} := \gcd(b_j,b_k)m_{012}q$ for $\{i,j,k\} = \{0,1,2\}$. Hence, we have
 \begin{equation*}\label{U}\tag{4.19}
     U(\bm X) = \sum_{\substack{\forall i \in \{0,1,2\}, \; \ell_i \leqslant X_i/\gcd(q,r_i)  }}  f({ \bm \ell}),
 \end{equation*}
with
\begin{equation*}\label{f}\tag{4.20}
    f(\bm \ell) := {\mu^2(\ell_0\ell_1\ell_2) \over  \tau(\ell_0\ell_1\ell_2) } \prod_{0 \leqslant i \leqslant 2} \mathds{1}\left( \gcd\left(\ell_i, 2\widetilde{\delta_i}\right) = 1\right).
\end{equation*}
    Fix $\bm z = \left(  {1 \over 2},  {1 \over 2},  {1 \over 2} \right)$ and recall that $\boldsymbol{\tau}_{\bm z}(n)$ is the $n$-th coefficient in the development of  $\zeta(s)^z$ ($\Re(s) > 1$) as a Dirichlet series. Let $g : \eN^3 \to \eC$ be the multiplicative function defined by $f = \boldsymbol{\tau}_{\bm z} \ast g$, meaning that
\[ \forall (\ell_0,\ell_1,\ell_2) \in \eN^3, \; f({\bm \ell}) = \!\!\!\!\!\!\! \sum_{\substack{{\bm k} \in \eN^3 \\ 
\forall i \in \{0,1,2\}, \; k_i \mid \ell_i}} \!\!\!\!\!\!\!\! \boldsymbol{\tau}_{\bm z} ({\bm k})g\left( {\ell_0 \over k_0}, {\ell_1 \over k_1}, {\ell_2 \over k_2} \right). \]Note that $g$ is well-defined since $\boldsymbol{\tau}_{\bm z}$ is invertible for the Dirichlet convolution and $D_g$ is holomorphic on $\{ \bm s \in \eC^3 : \underset{0 \leqslant i \leqslant 2}{\min} \sigma_i >1 \}$. In order to apply Lemma~\ref{lemme TN} we need the value of $D_g(\mathbf{1})$.\\

\begin{lemme}\label{série de dirichlet de h} The real number $D_g(\mathbf{1})$ is well-defined and we have $D_g(\mathbf{1}) = 2^{-3/2} \beta_q(\bm b, \bm m)$, where $\beta_q(\bm b, \bm m)$ is defined by $($\ref{expr beta}$)$.
\end{lemme}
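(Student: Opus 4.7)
The plan is to exploit the Euler product structure of $D_g$. Since $f$ is multiplicative (each of its three defining factors is) and $\boldsymbol{\tau}_{\bm z}$ is multiplicative, the quotient $g = f \ast \boldsymbol{\tau}_{-\bm z}$ is a multiplicative function of three variables, hence $D_g(\bm s)$ factors as $\prod_p L_{g,p}(\bm s)$ with $L_{g,p}(\bm s) = L_{f,p}(\bm s) / L_{\boldsymbol{\tau}_{\bm z}, p}(\bm s)$. The factor $L_{\boldsymbol{\tau}_{\bm z}, p}(\bm s)$ is easily identified as $\prod_{i=0}^{2}(1 - p^{-s_i})^{-1/2}$, which at $\bm s = \mathbf{1}$ equals $(1 - 1/p)^{-3/2}$, matching the normalisation appearing in $\beta_q(\bm b, \bm m)$.

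First I would compute the local factors of $D_f$. Because of the condition $\mu^2(\ell_0 \ell_1 \ell_2) = 1$, the local configurations at a prime $p$ are restricted to $(v_p(\ell_0), v_p(\ell_1), v_p(\ell_2)) \in \{(0,0,0), (1,0,0), (0,1,0), (0,0,1)\}$, and the indicator $\mathds{1}(\gcd(\ell_i, 2\widetilde{\delta_i}) = 1)$ kills the one-variable configuration unless $p \nmid 2\widetilde{\delta_i}$. Using the identity $\tau(p) = 2$, this yields
\[
L_{f,p}(\bm s) = 1 + \frac{1}{2}\sum_{i=0}^{2} \frac{\mathds{1}(p \nmid 2\widetilde{\delta_i})}{p^{s_i}},
\]
so that $L_{f,2}(\mathbf{1}) = 1$ (since $2 \mid 2\widetilde{\delta_i}$ trivially) and for odd $p$,
\[
L_{f,p}(\mathbf{1}) = 1 + \frac{1}{2p}\#\{i \in \{0,1,2\} : p \nmid \widetilde{\delta_i}\}.
\]

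Next I would match this count with the one appearing in $\beta_q(\bm b, \bm m)$. Recalling $\widetilde{\delta_i} = \gcd(b_j, b_k) m_{012} q$ where $\{i,j,k\} = \{0,1,2\}$, the map $i \mapsto \{j,k\}$ is a bijection between indices $i \in \{0,1,2\}$ and unordered pairs $\{j,k\} \subseteq \{0,1,2\}$. Hence
\[
\#\{i : p \nmid \gcd(b_j, b_k) m_{012} q\} = \#\{(i,j) : 0 \leqslant i < j \leqslant 2,\; p \nmid m_{012} \gcd(b_i, b_j) q\},
\]
which is exactly the quantity featured in $\beta_q(\bm b, \bm m)$.

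Finally, dividing by $L_{\boldsymbol{\tau}_{\bm z}, p}(\mathbf{1})$ at each prime gives $L_{g,2}(\mathbf{1}) = (1/2)^{3/2} = 2^{-3/2}$ and, for $p$ odd,
\[
L_{g,p}(\mathbf{1}) = \left(1 - \frac{1}{p}\right)^{3/2}\!\left(1 + \frac{\#\{(i,j) : i < j, \; p \nmid m_{012} \gcd(b_i, b_j) q\}}{2p}\right),
\]
yielding $D_g(\mathbf{1}) = 2^{-3/2} \beta_q(\bm b, \bm m)$. The only real obstacle is checking absolute convergence of the Euler product for $D_g$ at $\bm s = \mathbf{1}$; this follows from the expansion $L_{g,p}(\mathbf{1}) = 1 + O(1/p^2)$, which is transparent after expanding the two factors above to second order in $1/p$, since the linear terms in $1/p$ cancel.
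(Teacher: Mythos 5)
Your proof is correct and follows essentially the same route as the paper: compute the Euler product of $D_f$ from (\ref{f}), factor out $D_{\boldsymbol{\tau}_{\bm z}}(\bm s)=\zeta^{1/2}(s_0)\zeta^{1/2}(s_1)\zeta^{1/2}(s_2)$ to obtain $D_g$, and evaluate at $\bm s=\mathbf{1}$. You simply make explicit (prime by prime, including $p=2$ and the bijection $i\leftrightarrow\{j,k\}$) what the paper's one-line "direct calculation" leaves to the reader.
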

\begin{proof}
Let $\bm s = (s_0, s_1, s_2) \in \eC^3 $ be such that $\Re (s_i) = \sigma_i > 1 $ for all $i \in \{0,1,2\}$. A direct calculation starting from (\ref{f}) leads to
\begin{align*} D_f (\bm s) & =  \prod_{\substack{p > 2 \\ p \nmid m_{012}q }} \left( 1 + {1 \over 2 } \left(  {\mathds{1}(p \nmid \gcd(b_1,b_2) ) \over p^{s_0} } + {\mathds{1}(p \nmid \gcd(b_0,b_2)  ) \over p^{s_1} } + {\mathds{1}(p \nmid \gcd(b_0,b_1)) \over p^{s_2} } \right) \right).\end{align*}Moreover, the Dirichlet series of $f$ can be written $D_f = D_{\boldsymbol{\tau}_{\bm z} }D_g$ with
\[ D_{\boldsymbol{\tau}_{\bm z}}(\bm s) = \zeta^{1/2} (s_0) \zeta^{1/2}(s_1) \zeta^{1/2} (s_2).\]We can conclude since the infinite product $\displaystyle\prod_p \left( 1 - {1 \over p} \right)^{3 \over 2}  \left( 1 + {3\over 2p}\right)$ is convergent.
\end{proof}

\begin{prop}\label{Estimation Tn}
If $\displaystyle \gcd(q, r_{012}, m_{012})  \!\!\!\!\! \prod_{ \{i,j,k \} =  \{0,1,2\}} \!\!\!\!\!\! \gcd(q,r_i,b_j,b_k)>1$, then $T_{\bm r}({\bm X}) = 0$. Otherwise, if the conditions $($\ref{H1}$)$, $($\ref{H2}$)$, $($\ref{H3}$)$, and $($\ref{hyp odg Xi}$)$ are satisfied for some $\eta > 0$ that does not depend on the $X_i$, if $q$ is such that $v_2(q) \geqslant 3$, $v_p(q) \geqslant 2$ for all odd prime $p$ dividing $q$, and if $q \leqslant (\log \min X_i )^{1/6}$, then we have the estimate
  \begin{align*}
    & T_{\bm r}({\bm X}) = {\mu^2(  \gcd( q, r_{012})) \over \varphi(q)^3}  { \beta_q({\bm b,\bm m}) \over (2\pi)^{3/2} }   \left\{ \prod_{0 \leqslant i \leqslant 2} {X_i \over  \sqrt{\log X_i}} \right\} + O_\eta \left( {X_0X_1X_2  \log_2( 3  \min X_i)  \over \varphi(q)^3 (\log \min X_i )^{5/2} }  \right).
\end{align*} 

\end{prop}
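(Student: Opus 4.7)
The plan is to chain together the lemmas established so far. The vanishing case $\gcd(q, r_{012}, m_{012}) \prod_{\{i,j,k\} = \{0,1,2\}} \gcd(q, r_i, b_j, b_k) > 1$ is immediate from Lemma \ref{reecriture Tn}. Otherwise, Lemma \ref{reecriture Tn} expresses $T_{\bm r}(\bm X)$ as a sum over Dirichlet character triples $\bm \chi$ of the quantities $U_{\bm r, \bm \chi}(\bm X)$, with prefactor $\mu^2(\gcd(q, r_{012}))\gcd(q, r_{012})/\varphi(q)^3$. Since the number of such triples is at most $q^3 \leqslant (\log \min X_i)^{1/2}$, and since the proof of Lemma \ref{car non princ} in fact produces any fixed negative power of $\log \min X_i$ (by choosing $C'$ large), the contribution of triples containing a non-principal character fits within the stated error.

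For the principal character contribution, $U_{\bm r, \bm \chi}(\bm X) = U(\bm X)$ as in (\ref{U}), with weight $f$ from (\ref{f}). I would then apply Lemma \ref{lemme TN} with $k = 3$, $\bm z = (1/2, 1/2, 1/2)$, and the decomposition $f = \boldsymbol{\tau}_{\bm z} \ast g$, taking $\ell$ large enough so that the remainder there is of size $O(\prod X_i/(\log \min X_i)^{5/2})$. Setting $s = 0$ in (\ref{def ai}) yields $a_0(z) = 1/\Gamma(z)$, hence $a_0(1/2) = 1/\sqrt{\pi}$; combined with Lemma \ref{série de dirichlet de h}, the leading coefficient equals $c_0(\bm z; g) = D_g(\mathbf{1})/\pi^{3/2} = \beta_q(\bm b, \bm m)/(2\pi)^{3/2}$. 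The summation ranges for $\ell_i$ are $\leqslant X_i/\gcd(q, r_i)$, so $\log(X_i/\gcd(q, r_i))$ must be replaced by $\log X_i$ via the Taylor expansion $(L - D_i)^{-1/2} = L^{-1/2} + O(D_i/L^{3/2})$ with $L = \log X_i$ and $D_i \leqslant \log q \ll \log_2(3 \min X_i)$; this is precisely what produces the $\log_2$ factor in the final error. The secondary main terms produced by Lemma \ref{lemme TN} when $|\bm w| \geqslant 1$ must also be absorbed, which requires examining that the relevant derivatives of $D_g$ remain bounded and combine favorably with these Taylor corrections.

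Finally, the factor $\gcd(q, r_{012})/\prod_i \gcd(q, r_i)$ inherited from Lemma \ref{reecriture Tn} simplifies to $1$ whenever $\mu^2(\gcd(q, r_{012})) = 1$: since $q$ is squarefull (so $v_p(q) \geqslant 2$ for all $p \mid q$) and $\gcd(q, r_{012})$ is squarefree, each prime $p$ dividing $\gcd(q, r_{012})$ must satisfy $v_p(r_0) + v_p(r_1) + v_p(r_2) = 1$, which forces exactly one $v_p(r_i)$ to equal $1$ and the others to vanish, giving $v_p(\gcd(q, r_{012})) = 1 = \sum_i v_p(\gcd(q, r_i))$. The main technical obstacle will be verifying the moment hypothesis of Lemma \ref{lemme TN}, namely bounding $\sum_{\bm d} |g(\bm d)|(\log \max d_i)^{\ell + 1 + E_{\bm z, 3}}/(d_0 d_1 d_2)$ uniformly in $\bm b, \bm m, \bm r$. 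This requires an Euler product analysis extending the computation in Lemma \ref{série de dirichlet de h}, relying on the fact that the local factor of $g$ at $p$ differs from $1$ only when $p$ divides $2 m_{012} \gcd(b_i, b_j) q$, a sparse set controlled by hypothesis (\ref{H3}) together with $q \leqslant (\log \min X_i)^{1/6}$.
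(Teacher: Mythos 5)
Your proposal follows the paper's route almost exactly: vanish via Lemma~\ref{reecriture Tn}, dispatch the nonprincipal characters via Lemma~\ref{car non princ} (noting, as the paper implicitly does, that its proof yields any fixed power of $\log$), and treat the principal contribution via Lemma~\ref{lemme TN} with $\bm z = (1/2,1/2,1/2)$, recovering $\beta_q(\bm b,\bm m)/(2\pi)^{3/2}$ from $a_0(1/2)=1/\sqrt{\pi}$ and Lemma~\ref{série de dirichlet de h}, then Taylor-expanding $\log(X_i/\gcd(q,r_i))$ around $\log X_i$ to produce the $\log_2$ factor. Two small deviations from the paper are worth flagging. First, the paper takes $\ell = 0$ in Lemma~\ref{lemme TN}, which already yields an error of size $\prod X_i \cdot M/(\log\min X_i)^{5/2}$ after multiplying by the prefactor $\prod X_i(\log X_i)^{-1/2}$ (since the $X_i$ are comparable under (\ref{H3})); taking $\ell$ large as you suggest is not wrong but creates the extra task of bounding the $c_{\bm w}$ for $|\bm w|\geqslant 1$, which the paper avoids entirely. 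Second, your remark that the local factor of $g$ equals $1$ outside $p \mid 2m_{012}\gcd(b_i,b_j)q$ is slightly imprecise — at those primes the factor is $1 + O(p^{-2\sigma})$ rather than exactly $1$ — though this does not affect the viability of the Euler-product bound; the paper handles this cleanly by factoring $g = g_1 \ast h$ with $g_1$ a fixed absolutely convergent tail and $h$ supported on $\Delta$, then following Tenenbaum's exercise to get $M \ll (\log_2(5\Delta))^{3/2}$, which under (\ref{H3}) and $q\leqslant(\log\min X_i)^{1/6}$ is $\ll (\log_3(\min X_i))^{3/2}$ and is dominated by the $\log_2$ coming from the Taylor step.
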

\begin{proof}
We apply Lemma \ref{lemme TN} with $\ell= 0$, $\bm z = \left( {1 \over 2} , {1 \over 2} , {1 \over 2} \right)$, $a_0(1/2) = \Gamma(1/2)^{-1} = \pi^{-1/2}$, $E_{\bm z, k} = 3/2$ and $f = \boldsymbol{\tau}_{\bm z} \ast g$ defined in (\ref{f}) in order to estimate the sum $ U (\bm X)$ which appears in (\ref{U}). Note that the equality $a_0(1/2) = \Gamma(1/2)^{-1}$ comes from the unicity of the development (\ref{def ai}), identifying the constant term with $\displaystyle \lim_{s \to 0^+} {(s \zeta(s+1))^{1/2} \over s+1} = 1$. To verify that Lemma \ref{lemme TN} can be applied, it suffices to provide an upper bound for the quantity \[\sum_{{\bm k} \in \eN^3} {\abs{g({\bm k})} \over k_0k_1k_2} (\log  \max (k_0,k_1,k_2))^{5/2} \]depending on $\bm b, \bm m, q$. Since $f = g \ast \boldsymbol{\tau}_{\bm z}$, we can write\begin{align*}
    D_g({\bm s}) & := \sum_{ \bm{k} \in \eN^3} {g({\bm k}) \over k_0^{s_0} k_1^{s_1} k_2^{s_2}} =   D_f({\bm s}) \prod_{0 \leqslant i \leqslant 2} \prod_{p } \left(1 - {1 \over p^{s_i} } \right)^{1/2} \quad \quad (\sigma_i > 1).
\end{align*}
We notice that we have 
\begin{align*} D_f (\bm s)  = & \prod_{p} \left( 1 + {1 \over 2}\left( {1 \over p^{s_0}} +  {1 \over p^{s_1}} +  {1 \over p^{s_2}}  \right) \right) \prod_{p \mid m_{012} q }  \left( 1 + {1 \over 2}\left( {1 \over p^{s_0}} +  {1 \over p^{s_1}} +  {1 \over p^{s_2}}  \right) \right)^{-1} \\
& \times \!\!\!\!\!\!\!\!\!\!\! \prod_{\substack{p \mid \gcd(b_0,b_1)\gcd(b_1,b_2) \gcd(b_0,b_2) \\
p \nmid m_{012}q}} \!\!\!\!\!\!\!\!\!\!\!\!\!\! { \left( 1 + {1 \over 2 } \left(  {\mathds{1}(p \nmid \gcd(b_1,b_2) ) \over p^{s_0} } + {\mathds{1}(p \nmid \gcd(b_0,b_2)  ) \over p^{s_1} } + {\mathds{1}(p \nmid \gcd(b_0,b_1)) \over p^{s_2} } \right) \right) \over  \left( 1 + {1 \over 2}\left( {1 \over p^{s_0}} +  {1 \over p^{s_1}} +  {1 \over p^{s_2}}  \right) \right) } \quad \quad (\sigma_i > 1).\end{align*}Therefore, we can write $g = g_1 \ast h$ where $g_1$ is the multiplicative function whose Dirichlet series is given by 
\begin{align*}D_{g_1} (\bm s) := & \prod_{p}    \left(1 - {1 \over p^{s_0}}  \right)^{1/2} \left(1 - {1 \over p^{s_1}}  \right)^{1/2}\left(1 - {1 \over p^{s_2}}  \right)^{1/2}  \left( 1 + {1 \over 2}\left( {1 \over p^{s_0}} +  {1 \over p^{s_1}} +  {1 \over p^{s_2}}  \right) \right), \quad \quad (\sigma_i > 1) \end{align*}and $h$ is the multiplicative function whose Dirichlet series is given by
\begin{align*} D_h(\bm s) := & \prod_{p \mid m_{012} q }  \left( 1 + {1 \over 2}\left( {1 \over p^{s_0}} +  {1 \over p^{s_1}} +  {1 \over p^{s_2}}  \right) \right)^{-1} \\ & \times \!\!\!\!\!\!\!\!\!\!\!\!\!\!\!\!\!\!\!\!\!\!\!\!  \prod_{\substack{p \mid \gcd(b_0,b_1)\gcd(b_1,b_2) \gcd(b_0,b_2) \\
p \nmid m_{012}q}} { \left( 1 + {1 \over 2 } \left(  {\mathds{1}(p \nmid \gcd(b_1,b_2) ) \over p^{s_0} } + {\mathds{1}(p \nmid \gcd(b_0,b_2)  ) \over p^{s_1} } + {\mathds{1}(p \nmid \gcd(b_0,b_1)) \over p^{s_2} } \right) \right) \over  \left( 1 + {1 \over 2}\left( {1 \over p^{s_0}} +  {1 \over p^{s_1}} +  {1 \over p^{s_2}}  \right) \right) } \quad \quad (\sigma_i > 1). \end{align*}Let $\Delta := m_{012}\gcd(b_0,b_1)\gcd(b_0,b_2) \gcd(b_1,b_2)q$. We follow \cite[exercise 202]{TW}. Note that for any $c > {1/2}$, we have the upper bounds
\[ \sum_{\bm k \in \eN^3} {\abs{g_1(\bm k)} \over k_0^{\sigma_0} k_1^{\sigma_1} k_2^{\sigma_2} } \ll_{c} 1 \quad \quad ( \sigma_i \geqslant c ) \]and
\[ \sum_{\bm k \in \eN^3} {\abs{h(\bm k)} \over k_0^{\sigma_0} k_1^{\sigma_1} k_2^{\sigma_2} } \ll_c \exp \left( \sum_{p \mid \Delta} {1 \over 2}\left( {1 \over p^{\sigma_0} } + {1 \over p^{\sigma_1} } + {1 \over p^{\sigma_2}} \right)  \right) \quad \quad ( \sigma_i \geqslant c ),\]from which we aim to deduce

\[ \label{hyp SD} \tag{4.21}\sum_{{\bm k} \in \eN^3} {\abs{g({\bm k})} \over k_0k_1k_2} (\log \max (k_0,k_1,k_2))^{5/2} \ll (\log_2 (5 \Delta) )^{3/2}. \]Indeed, let $p_\Delta$ be the $\omega(\Delta)$-th prime, and $\sigma_\Delta := 1-1/(3 + \log p_\Delta)$. We have the bound
\[ \sum_{p \mid \Delta} {1 \over p^{\sigma_\Delta}}\leqslant \sum_{p \leqslant p_\Delta} {1 + O(\log p / \log p_\Delta) \over p} \ll \log_2 p_\Delta + O(1) \ll \log_3(5\Delta) + O(1),\]which yields
\[ \sum_{{\bm k} \in \eN^3} {\abs{g({\bm k})} \over (k_0k_1k_2)^\sigma } \ll (\log_2(5\Delta))^{3/2} \quad \quad (\sigma \geqslant \sigma_\Delta)  . \]Inequality (\ref{hyp SD}) follows from the bound
\[ t^\kappa \leqslant \Gamma(\kappa+1) \mathrm{e}^t \quad \quad (t >0,\; \kappa \geqslant 0),\]for $t= \log(k_0k_1k_2)/(3 + \log p_\Delta)$ and $\kappa = 5/2$. We can thus apply Lemma \ref{lemme TN}, which provides for all $\bm X \in [2, + \infty )^3$ satisfying $($\ref{hyp odg Xi}$)$ for some $\eta > 0$ that does not depend on the $X_i$, the estimate
\begin{align*} U (\bm X) =  {  D_g(\mathbf{1})  \over \pi^{3/2}  \gcd( q,r_{012})  }  \! \left\{  \prod_{0 \leqslant i \leqslant 2} {X_i \over  \sqrt{\log {X_i \over \gcd(q,r_i)} }} \right\}\! +\! O_\eta \! \left( {X_0X_1X_2 (\log_2 (5 \max(b_i,m_{ij},q ) ))^{3\over 2}  \over \gcd (q, r_{012}) (\log \min X_i )^{5/2} }  \! \right). \end{align*}We now use the estimate
\[ \left(\log {X_i \over \gcd(q,r_i)} \right)^{-1/2} = (\log X_i )^{-1/2}\left( 1 - {\log \gcd (q,r_i) \over \log X_i } \right)^{-1/2} = (\log X_i )^{-1/2}\left( 1 + O \left( {\log_2 3\min X_i \over \log X_i} \right)\right), \]the bounds $\beta_q({\bm b,\bm m}) \leqslant \displaystyle \prod_{p >2} \left( 1 - {1 \over p} \right)^{3/2} \left( 1 + {3 \over 2p} \right)$ and $\max(b_0,b_1,b_2,m_{12},m_{02},m_{01}) \leqslant (\log \min X_i)^{C}$, and Lemma \ref{série de dirichlet de h}, to write
\begin{align*} U (\bm X) = & {  \beta_q({\bm b,\bm m}) \over (2\pi)^{3/2}  \gcd( q,r_{012})  }  \left\{ \prod_{0 \leqslant i \leqslant 2} {X_i \over  \sqrt{\log X_i}} \right\} +  O  \left( {X_0X_1X_2 \log_2 (3 \min X_i)\over \gcd (q, r_{012}) (\log \min X_i )^{5/2} }  \right). \end{align*} From Lemmas \ref{reecriture Tn} and \ref{car non princ}, it follows that
  \begin{align*}
    T_{\bm r}({\bm X}) = & {\mu^2(  \gcd( q, r_{012})) \over \varphi(q)^3}  { \beta_q({\bm b,\bm m}) \over (2\pi)^{3/2} }   \left\{ \prod_{0 \leqslant i \leqslant 2} {X_i \over  \sqrt{\log X_i}} \right\} \\
    & + O \left( {X_0X_1X_2 \log_2(3  \min X_i) \over  \varphi(q)^3  (\log \min X_i )^{5/2} }  \right).
\end{align*} This completes the proof of Proposition \ref{Estimation Tn}.
\end{proof}

\begin{rmk}\label{meme ODG}
    Under the same assumptions as in Proposition \ref{Estimation Tn}, and since $v_2(q) \geqslant 3$, we obtain \textit{mutatis mutandis}
   \begin{align*}  T_{\left( {r_0 \over 2},r_1,r_2\right),\left({q \over 2},q,q\right)} \left( {X_0 \over 2}, X_1, X_2\right) = & \; {\mu^2( \gcd( q, r_{012})) \over \varphi(q)^2 \varphi(q/2)}  { \beta_q({\bm b,\bm m}) \over (2\pi)^{3/2} } {X_0 \over 2 \sqrt{\log (X_0/2)}}  \left\{ \prod_{1 \leqslant i \leqslant 2} {X_i \over  \sqrt{\log X_i}} \right\} \\
   & + O \left( {X_0X_1X_2  \log_2( 3   \min X_i)  \over \varphi(q)^3 (\log \min X_i )^{5/2} }  \right)\\
   = & \;  {\mu^2(  \gcd( q, r_{012})) \over \varphi(q)^3 }  { \beta_q({\bm b,\bm m}) \over (2\pi)^{3/2} }  \left\{ \prod_{0 \leqslant i \leqslant 2} {X_i \over  \sqrt{\log X_i}} \right\} \\
   & + O \left( {X_0X_1X_2  \log_2( 3   \min X_i)  \over \varphi(q)^3 (\log \min X_i )^{5/2} }  \right)
   \end{align*}
  and the same estimate for $ T_{\left( r_0,{r_1 \over 2},r_2\right),\left(q,{q\over 2},q\right)}\left( X_0, {X_1 \over 2}, X_2\right)  $ and $T_{\left( r_0,r_1,{r_2 \over 2}\right),\left(q,q,{q\over 2}\right)}\left( X_0, X_1, {X_2 \over 2} \right)   $.
\end{rmk}

The conclusion of the proof of Proposition \ref{calcul de MbmX} thus follows from (\ref{disjonction}).

\paragraph{Step $4$: returning to $\cN_{\bm b,\bm m,\bm r}({\bm X})$.} It remains to estimate the quantity $\cE_{\bm b,\bm m,\bm r}({\bm X})$ defined by (\ref{Err}).

\begin{prop}\label{estimation reste}
   In the setting of Proposition \ref{réécriture de Nbms(X)}, for all $C' > 0$ we have the upper bound
    \[ \cE_{\bm b,\bm m,\bm r}({\bm X}) \ll_{C'} {X_0 X_1 X_2  \over (\log \min X_i)^{C'}}. \]
\end{prop}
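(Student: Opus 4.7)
The plan is to exhibit cancellation in the Jacobi symbols that make up $\Theta$. Expand $\Theta(m_{12}\ell_0, m_{02}\ell_1, m_{01}\ell_2)$ using (\ref{theta}) and interchange the order of summation, moving the six variables $(\bm u, \bm u')$ to the outside. Since each $\ell_i$ is squarefree and coprime to $m_{012}$, the relation $u_i u_i' = m_{jk}\ell_i/\gcd(m_{jk}\ell_i, q)$ determines $\ell_i$ from $(u_i, u_i')$ up to the $\tau(q)^{O(1)}$ possible values of $\gcd(m_{jk}\ell_i, q)$. The conditions in (\ref{4.8.3}) and the congruence $\bm \ell \equiv \bm r \Mod{q}$ become coprimality and residue constraints on $(\bm u, \bm u')$ modulo $q$, $b_i$, and $m_{ij}$; the latter can be decoupled via orthogonality of Dirichlet characters modulo $q$ at the cost of a factor $\varphi(q)^{O(1)}$.

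Next, apply quadratic reciprocity to each of the three Jacobi symbols. The assumptions $8 \mid q$ and $\gcd(u_{012} u_{012}', q) = 1$ force all $u_i$ and $u_i'$ to be odd, so reciprocity produces only sign factors depending on residues modulo $8$, which are absorbed by splitting into finitely many cases. Substituting $a = u_0 u_0' \gcd(a,q)$, $b = u_1 u_1' \gcd(b,q)$, $c = u_2 u_2' \gcd(c,q)$ and simplifying by multiplicativity, the sum reduces to a weighted character sum of the shape
\[ \sum_{\bm u, \bm u'} w(\bm u, \bm u')\left( \frac{u_1 u_1' u_2 u_2'}{u_0}\right) \left( \frac{u_0 u_0' u_2 u_2'}{u_1}\right) \left( \frac{u_0 u_0' u_1 u_1'}{u_2}\right), \]
where $w$ is a slowly-varying multiplicative weight restricted to residue classes. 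Viewed as a function of $u_0$ alone, this product collapses (after reciprocity) to a Jacobi symbol of modulus dividing $4 u_1' u_2'$, and symmetric statements hold for each of the other five variables.

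Freeze five of the six variables and apply P\'olya--Vinogradov to the residual sum in the sixth. The hypotheses $u_0 u_1 u_2 \neq 1$ and $u_0' u_1' u_2' \neq 1$ guarantee that at least one of the six associated characters is nonprincipal, yielding a bound of shape $O(\sqrt{X_j X_k} (\log \min X_i)^A)$ in place of the trivial $O(X_i)$ in that variable, hence an overall saving of $X_i /(\sqrt{X_j X_k} (\log \min X_i)^A)$ over the trivial bound $X_0 X_1 X_2$. Since $X_0 \asymp X_1 \asymp X_2 \asymp B$ up to polylogarithmic factors in the setting of Lemma \ref{réécriture Nn(B)}, this saving is polynomial in $B$ and therefore dominates any negative power of $\log \min X_i$; the bookkeeping overhead $\tau(q)^{O(1)} \varphi(q)^{O(1)} (\log \min X_i)^{O(1)}$ is absorbed harmlessly using $q \leqslant (\log \min X_i)^{1/6}$.

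The main obstacle will be the diagonal-like subcases where one of the $u_i$ equals $1$, which make the symbol in that variable trivial even though the nontriviality hypothesis $u_0 u_1 u_2 \neq 1$ still holds on the remaining two variables. One must then switch to summing over a different $u_j$ and check that the residual character there is nonprincipal; analogous care is needed for the primed variables. The deepest degenerate cases, in which several of the $u_i$ or $u_i'$ collapse to $1$, contribute sums of size $O_q(1)$ in at least two of the six variables and are handled trivially.
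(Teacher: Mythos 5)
There is a genuine gap, and it is in the crucial analytic step. Your plan is to freeze five of the six variables $u_0, u_0', u_1, u_1', u_2, u_2'$ and apply P\'olya--Vinogradov to the sixth. But the conductor of the character you obtain in (say) $u_0$ is of size roughly $u_1' u_2'$, which has no reason to be small compared to the range of $u_0$. In the generic balanced regime, each of the six variables has size around $X^{1/2}$, so the conductor in $u_0$ is of order $X$, while the range of $u_0$ is only of order $X^{1/2}$. P\'olya--Vinogradov then gives $O(\sqrt{X}\log X)$, which is \emph{worse} than the trivial estimate $O(X^{1/2})$. Your claim that the bound $O(\sqrt{X_j X_k}(\log \min X_i)^A)$ dominates the trivial bound $O(X_i)$ has the inequality backwards: since $X_i \asymp X_j \asymp X_k$, the quantity $\sqrt{X_j X_k}$ is \emph{at least} as large as $X_i$, so the ``saving'' $X_i/\sqrt{X_j X_k}$ is $\leq 1$ and you gain nothing. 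This is precisely why the literature (Friedlander--Iwaniec, then Loughran--Rome--Sofos, and the present paper) does not use a one-variable character sum bound here.

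The paper instead follows \cite[§5.5--5.7]{den} and separates the argument into two regimes depending on the size of the conductor. For \emph{large} conductors it invokes the genuinely bilinear ``double oscillation'' estimate of Friedlander--Iwaniec (quoted as (\ref{6.1.1}) in the paper): the bound $\ll (MN^{5/6}+NM^{5/6})(\log MN)^2$ on $\sum_{n,m} a_n b_m \mu^2(2mn)(m/n)$ yields a power saving regardless of how $M$ and $N$ compare, which is exactly what a one-variable P\'olya--Vinogradov cannot do. For \emph{small} conductors, including but not limited to the degenerate cases you flag, it uses a Siegel--Walfisz type estimate (quoted as (\ref{smallcond})); merely ``summing trivially in at least two variables'' is not sufficient there, since one still has long sums against characters of bounded conductor that need the logarithmic saving. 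Without the double-oscillation input your argument cannot reach the stated bound, so the proposal does not establish the proposition as written.
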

\begin{proof} We estimate the quantity
\[ \sum_{\substack{ \forall i \in \{0,1,2\}, \; \ell_i \leqslant X_i \\ (\ref{cond cal E})} } \!\!\!\!\!\!\!\!\!\! {\mu^2(\ell_{012}) \tau( \gcd(\pimp{(\ell_{012})},q) ) \over \tau(\pimp{(\ell_{012})})} \!\!\!\!\!\!\sum_{\substack{{\bm u}, {\bm u'} \in \eN^3  \\
          (\ref{cond theta}) }}  \left( {m_{02}m_{01}\ell_1\ell_2 \over u_0}\right) \left( {m_{12}m_{01} \ell_0\ell_2 \over u_1}  \right) \left( {-m_{12}m_{02}\ell_0 \ell_1 \over u_2} \right). \]As in \cite[lemma~5.12]{LRS}, we can treat each prime factor of $q$ as $2$ and write $\ell_i = \gcd(\ell_i,q) d_id_i' = g_id_id_i'$, and $m_{ij} = \gcd(m_{ij},q) h_{ij} h_{ij}'$ to isolate a sum of the form 
          \[\sum_{\substack{\bm d , \bm d' \\ d_id_i' \leqslant X_i/g_i}} {\mu^2(d_{012}d_{012}') \over \tau(d_{012}d_{012}') } (-1)^{G_{\bm h}(\bm d)/4}\left( {g_1g_2 \over d_0h_{12}}\right) \left( { g_0g_2 \over d_1h_{02}}  \right) \left( {-g_0g_1 \over d_2h_{01}} \right) \left( {d_1'd_2' \over d_0h_{12}}\right) \left( { d_0'd_2' \over d_1h_{02}}  \right) \left( {-d_0'd_1' \over d_2h_{01}} \right),  \]
          where each $d_id_i'h_{jk}h_{jk}'$ is coprime to $q$ and where $(-1)^{G_{\bm h}(\bm d)/4}$ comes from the quadratic reciprocity law. Conditions (\ref{cond cal E}) and (\ref{congr}) impose in particular $d_i \equiv w_i \Mod{q/g_i}$ for some $\bm w \in \underset{0 \leqslant i \leqslant 2}{\prod}\eZ/ {q \over g_i}\eZ$. Note that choosing this congruence condition introduces a factor of size at most $q^3$. We now proceed to a slight adaptation of the calculation done in \cite[§5.5--5.7]{LRS} which relies on two results, respectively handling the contributions from the large conductors and from the small conductors. To deal with the large conductors, the authors of \cite{LRS} use a well-known result from Friedlander and Iwaniec \cite[lemma~2]{FI} providing the bound
\begin{equation*}\tag{4.22}\label{quadra sieve}
   \forall M,N \geqslant 2, \;  \sum_{\substack{1 \leqslant n \leqslant N \\ 1 \leqslant m \leqslant M}} a_nb_m \mu^2(2mn) \left({m \over n} \right) \ll (MN^{5/6} + NM^{5/6}) (\log(MN))^2
\end{equation*}for any complex sequences $(a_n)$ and $(b_m)$ satisfying $|a_n|, |b_m| \leqslant 1$ for all $n,m$, and where the implicit constant in the notation $\ll$ is absolute. In our case, we need to add congruence conditions on $n$ and~$m$, which can be done by replacing the complex number $a_n$ (\textit{resp.} $b_m$) by $a_n\mathds{1}(n \equiv r_1 \Mod{q / g_i})$ (\textit{resp.}~$b_m\mathds{1}(m \equiv r_2 \Mod{q/g_i})$). The upper bound for the contribution of the small conductors relies on \cite[lemma~5.17]{LRS} which is a slight modification of \cite[lemma 1]{FI}. This lemma states that the following bound holds
\begin{equation*}\tag{4.23}\label{smallcond}
   \forall X,Y \geqslant 2, \;  \sum_{\substack{Y \leqslant n \leqslant X \\ \gcd(n,D) = 1}} f(n) \chi(n){\mu^2(n) \over \tau(n)} \ll_{C'} {X \over (\log X)^{C'}} \tau(D) Q_0^2Q \max_{n \in \eN} |f(n)|
\end{equation*}for any $C'>0$, any periodic function $f : \eN \to \eC$ of period $Q_0$, for any $D \in \eN$ and non-principal Dirichlet character $\chi$ modulo $Q$ with $\gcd(Q,Q_0)=  1$. Similarly to \cite[lemmas 5.18 and 5.19]{LRS}, we employ this estimate to sums of the form

\[ \sum_{ \substack{ (\log \min X_i)^{c_1} <  d_2 \leqslant X_2 \\
d_2 \equiv w_2 \Mod{q/g_2} \\
\gcd(d_2,D) = 1}} f(d_2) \left( {d_2 \over d_0'd_1' h_{12}' h_{02}' } \right){\mu^2 (d_2) \over \tau(d_2)},  \]for some $c_1 > 0$. Note that in \cite[lemmas 5.18 and 5.19]{LRS}, this bound is used for $Q_0 \in \{1,8\}$ since in their case $f = 1$ or $f$ a product of $(-1)^{G_{\bm h}(\bm d)/4}$ and Legendre symbols $\left( {2 \over \cdot} \right)$. In our case, the primes dividing $Q_0 $ are divisors of $2g_0g_1$, since $f = 1$ or $f$ is a product of $(-1)^{G_{\bm h}(\bm d)/4}$ and Jacobi symbols $\left({g_0g_1\over \cdot}\right)$. To deal with the congruence condition modulo $q/g_2$, we use orthogonality of Dirichlet characters modulo $q/g_2$ ($d_2$ is invertible modulo $q$). It suffices then to replace $f$ by $\psi f$ with $\psi$ a Dirichlet character modulo $q/g_2$. Since $q$ is square-full, $\psi f$ is periodic of period coprime to $Q = d_0'd_1'h_{12}' h_{02}'$ so we can apply (\ref{smallcond}) and conclude as in \cite[§5.7]{LRS} by taking $C'$ large enough.
\end{proof}

\begin{cor}\label{estimation Nbms(X)} If $\bm b, \bm m \in \eN^3$ and $\bm X \in [2, + \infty )^3$ satisfy the conditions $($\ref{H1}$)$, $($\ref{H2}$)$, $($\ref{H3}$)$, and $($\ref{hyp odg Xi}$)$ for some $\eta > 0$ that does not depend on the $X_i$, if $q \in \eN$ is such that $v_2(q) \geqslant 3$, $v_p(q) \geqslant 2$ for all odd prime $p$ dividing $q$, and if~$q \leqslant (\log\min X_i )^{1/6}$, we have for all $\bm r \in (\eZ/q\eZ)^3$ the estimate
     \begin{align*}
      \cN_{\bm b,\bm m,\bm r}({\bm X}) = & {2 \alpha({\bm b,\bm m,\bm r}) \beta_q({\bm b,\bm m})\tau( \gcd(\pimp{(m_{012})},q)) \over (2 \pi)^{3/2} \varphi(q)^3 \tau( \pimp{(m_{012})})} \left\{ \prod_{i = 0}^2 {X_i \over \sqrt{\log(X_i)} } \right\}\\
      &+O_\eta\left( {X_0X_1X_2  \log_2( 3  \min X_i) \over \varphi(q)^3(\log \min X_i)^{5/2}}  \right)
    \end{align*}
    where $\alpha(\bm b, \bm m, \bm r)$ and $\beta_q(\bm b, \bm m)$ are respectively defined by $($\ref{expr alpha}$)$ and $($\ref{expr beta}$)$.
\end{cor}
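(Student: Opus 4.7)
The plan is to assemble the corollary mechanically from the three preceding results, namely Proposition \ref{réécriture de Nbms(X)}, Proposition \ref{calcul de MbmX} and Proposition \ref{estimation reste}; all the hard work has already been done. First I would invoke Proposition \ref{réécriture de Nbms(X)} to write
\[
\cN_{\bm b,\bm m,\bm r}({\bm X}) = \frac{\tau(\gcd(\pimp{(m_{012})},q))}{\tau(\pimp{(m_{012})})}\bigl( 2\cM_{\bm b,\bm m,\bm r}({\bm X}) + \cE_{\bm b,\bm m,\bm r}({\bm X})\bigr) + O(\tau(q)^3).
\]
The hypotheses (\ref{H1})--(\ref{H3}) together with $q \leqslant (\log \min X_i)^{1/6}$ (which is stronger than the $1/7$ bound in Theorem~\ref{problème de comptage} and matches those of Propositions \ref{calcul de MbmX} and \ref{estimation reste}) are inherited throughout, so all three results apply simultaneously.

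Next I would substitute the asymptotic formula of Proposition \ref{calcul de MbmX} for $\cM_{\bm b,\bm m,\bm r}({\bm X})$. Multiplication by the prefactor $\tau(\gcd(\pimp{(m_{012})},q))/\tau(\pimp{(m_{012})})$ preserves the shape of the main term, producing exactly the factor that appears in the corollary's main term. For the error coming from Proposition \ref{calcul de MbmX}, I would use the trivial bound $\tau(\gcd(\pimp{(m_{012})},q))/\tau(\pimp{(m_{012})}) \leqslant 1$ so that this contribution remains $O\bigl(X_0X_1X_2\log_2(3\min X_i)/(\varphi(q)^3(\log \min X_i)^{5/2})\bigr)$, matching the stated error.

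For the $\cE_{\bm b,\bm m,\bm r}({\bm X})$ piece I would apply Proposition \ref{estimation reste} with the choice $C' = 5/2$ (any $C' \geqslant 5/2$ works), giving $\cE_{\bm b,\bm m,\bm r}({\bm X}) \ll X_0X_1X_2/(\log \min X_i)^{5/2}$, which is absorbed into the error term since $1 \leqslant q^3/\varphi(q)^3$ and $\log_2(3\min X_i) \geqslant 1$. Finally, the residual $O(\tau(q)^3)$ from Proposition \ref{réécriture de Nbms(X)} is entirely harmless: since $q \leqslant (\log \min X_i)^{1/6}$ one has $\tau(q)^3 \ll q^{o(1)} \ll (\log \min X_i)^{o(1)}$, which is dwarfed by $X_0X_1X_2/(\log \min X_i)^{5/2}$.

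There is no genuine obstacle here; the only thing that needs care is the bookkeeping to verify that each of the three error contributions really is dominated by the single error term displayed in the statement, and in particular that the extra factor $\tau(\gcd(\pimp{(m_{012})},q))/\tau(\pimp{(m_{012})})$ never enlarges the main error. Once those routine comparisons are done, the two stated formulas — the vanishing statement when $\prod_{p\mid q}\vartheta_p(\bm n) = 0$ (inherited through Proposition \ref{réécriture de Nbms(X)} from Lemma \ref{réécriture Nn(B)}) and the asymptotic otherwise — follow immediately.
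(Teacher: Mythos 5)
Your overall plan — invoke Proposition \ref{réécriture de Nbms(X)} to decompose $\cN_{\bm b,\bm m,\bm r}$, substitute Proposition \ref{calcul de MbmX} for $\cM_{\bm b,\bm m,\bm r}$, and dispose of $\cE_{\bm b,\bm m,\bm r}$ with Proposition \ref{estimation reste} — is exactly what the paper does, and most of your bookkeeping is correct (in particular the observation that the prefactor $\tau(\gcd(\pimp{(m_{012})},q))/\tau(\pimp{(m_{012})}) \leqslant 1$ is the right one). However, there is a genuine gap in your treatment of $\cE_{\bm b,\bm m,\bm r}({\bm X})$: the choice $C'=5/2$ is not sufficient, and the justification you offer (``since $1 \leqslant q^3/\varphi(q)^3$'') argues in the wrong direction — the target error carries $\varphi(q)^3$ in the \emph{denominator}, not $q^3/\varphi(q)^3 \geqslant 1$ in the numerator, so having $\varphi(q)^3$ below the line makes the target \emph{smaller}, not larger.

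Concretely, with $C'=5/2$ you get $\cE_{\bm b,\bm m,\bm r}({\bm X}) \ll X_0X_1X_2/(\log \min X_i)^{5/2}$, and you must dominate this by $X_0X_1X_2\,\log_2(3\min X_i)/(\varphi(q)^3 (\log \min X_i)^{5/2})$, i.e.\ you need $\varphi(q)^3 \ll \log_2(3\min X_i)$. But the hypothesis only gives $q \leqslant (\log \min X_i)^{1/6}$, so $\varphi(q)^3$ can be as large as $(\log \min X_i)^{1/2}$, which is not $\ll \log_2(3\min X_i)$. The fix is exactly what the paper does: take a slightly larger exponent, e.g.\ $C'=3$ (indeed any $C' \geqslant 3$), so that $\cE_{\bm b,\bm m,\bm r}({\bm X}) \ll X_0X_1X_2/(\log \min X_i)^3$, and now the required inequality $\varphi(q)^3 \ll (\log \min X_i)^{1/2}\log_2(3\min X_i)$ follows from $\varphi(q)^3 \leqslant q^3 \leqslant (\log \min X_i)^{1/2}$. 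With this correction the remainder of your assembly goes through.
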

\begin{proof}
    It suffices to combine Propositions \ref{réécriture de Nbms(X)}, \ref{calcul de MbmX} and \ref{estimation reste} with $C > 5/2$, for instance~$C~=~3$. 
\end{proof}

\paragraph{Step 5 : returning to $N'({\bm n},q,\bm B)$} We finally obtain the following estimate for $N'({\bm n},q,\bm B)$.

\begin{prop}\label{prop3.14}Let $q \in \eN$ be such that $v_2(q) \geqslant 3$ and $v_p(q) \geqslant 2$ for all odd prime $p$ dividing $q$. For $\bm n \in (\eZ\cap [0,q))^3$, and for $\bm b, \bm m \in \eN^3$ denote by $\gamma_{\bm n}(\bm b, \bm m)$ the quantity
\begin{align*} \label{gamma} \tag{4.24} 
\gamma_{\bm n}(\bm b, \bm m) :=  \# \left\{ \bm r \in (\eZ / q\eZ)^3 :  \begin{tabular}{c} $\forall \{i,j,k\} = \{0,1,2\}, \; b_i^2 m_{ij} m_{jk} r_i \equiv n_i \; (\bmod \; q)$ \\ $\forall \{i,j,k\} = \{0,1,2\}, \; \gcd(r_i,q,b_j,b_k)=1$ \\ $\forall p \mid q, \; p^2 \nmid r_{012}m_{012} $
\end{tabular}  \right\}.
\end{align*}
Let $B \geqslant 16$, and $\bm B = (B_0,B_1,B_2) \in \left[ {B \over (\log B)^2}, B \right]^3$. If $q \leqslant (\log B)^{1/7}$, $\max (v_2(n_0n_2),v_2(n_1n_2)) < v_2(q) - 2$ and $\max (v_p(n_0n_2),v_p(n_1n_2)) < v_p(q)$ for all odd prime $p$ dividing $q$, then $N'({\bm n},q,\bm B)$ is equal to 
    \begin{align*}
         & {2 \over \varphi(q)^3 (2 \pi)^{3/2} } \left( \prod_{0 \leqslant i \leqslant 2} {B_i \over (\log B_i)^{1/2}} \right)  \!\!\!\!\!\! \sum_{ \substack{ \bm b, \bm m\in \eN^3  \\ \gcd(b_0,b_1,b_2) = 1\\ \gcd(m_{ij},b_k) =  1 } } \!\!\!\!\!\!\!\!\!\!\!\!\! \mu^2(m_{012})  { \beta_q({\bm b,\bm m}) \gamma_{\bm n}(\bm b, \bm m)\tau(\gcd(\pimp{(m_{012})} , q)) \over b_{012}^2 m_{012}^2 \tau( \pimp{(m_{012})})}\\
        &+ O \left( {B_{012} (\log_2 B) q^3 \over \varphi(q)^3 (\log B)^{5/2}}   \right).
    \end{align*} 
\end{prop}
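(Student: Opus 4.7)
The plan is to combine the decomposition of $N'(\bm n, q, B)$ obtained within the proof of Lemma \ref{réécriture Nn(B)} with the uniform asymptotic for $\cN_{\bm b, \bm m, \bm r}$ furnished by Corollary \ref{estimation Nbms(X)}. The proof of that lemma actually establishes the decomposition for $N'$ itself, since the only step requiring $\prod_{p \mid q} \vartheta_p(\bm n) = 1$ is the passage from $N$ to $N'$. Fixing $C = 3$ yields
\begin{align*}
N'(\bm n, q, B) &= \sum_{\substack{\bm b, \bm m \in [1, (\log B)^3]^3 \\ \gcd(m_{ij}, b_k) = 1 \\ \gcd(b_0, b_1, b_2) = 1}} \sum_{\substack{\bm r \in (\eZ/q\eZ)^3 \\ b_i^2 m_{ij} m_{ik} r_i \equiv n_i \Mod{q}}} \mu^2(m_{012})\, \cN_{\bm b, \bm m, \bm r}\!\left(\tfrac{B}{b_0^2 m_{01} m_{02}}, \tfrac{B}{b_1^2 m_{01} m_{12}}, \tfrac{B}{b_2^2 m_{12} m_{02}}\right) \\
&\quad + O\!\left(\tfrac{B^3 q^3}{(\log B)^3}\right).
\end{align*}
The ranges $b_i, m_{ij} \leqslant (\log B)^3$ combined with $q \leqslant (\log B)^{1/7}$ guarantee $\log X_i = \log B + O(\log_2 B)$ and $q \leqslant (\log \min X_i)^{1/6}$, so the hypotheses (\ref{H1}), (\ref{H2}), (\ref{congr}) and (\ref{H3}) required by Corollary \ref{estimation Nbms(X)} are all satisfied.

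Next I would substitute the asymptotic from Corollary \ref{estimation Nbms(X)}. Using $X_0 X_1 X_2 = B^3/(b_{012}^2 m_{012}^2)$ and the expansion $\prod_i (\log X_i)^{-1/2} = (\log B)^{-3/2}(1 + O(\log_2 B / \log B))$, the main term factors as
\begin{align*}
\frac{2\, \beta_q(\bm b, \bm m)\, \tau(\gcd(\pimp{(m_{012})}, q))}{(2\pi)^{3/2} \varphi(q)^3 \tau(\pimp{(m_{012})})} \cdot \frac{B^3}{b_{012}^2 m_{012}^2 (\log B)^{3/2}} \cdot \alpha(\bm b, \bm m, \bm r).
\end{align*}
Summing over $\bm r$ subject to the congruence $b_i^2 m_{ij} m_{ik} r_i \equiv n_i \Mod{q}$ replaces $\alpha(\bm b, \bm m, \bm r)$ by $\gamma_{\bm n}(\bm b, \bm m)$: because $q$ is squarefull and $\mu^2(m_{012}) = 1$, the conjunction of $\mu^2(\gcd(q, r_{012})) = 1$ with $\gcd(q, r_{012}, m_{012}) = 1$ is equivalent to $p^2 \nmid r_{012} m_{012}$ for every $p \mid q$, which together with $\gcd(q, r_i, b_j, b_k) = 1$ is exactly the set of constraints defining $\gamma_{\bm n}(\bm b, \bm m)$.

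For the error terms, I would collect: the per-triple error from Corollary \ref{estimation Nbms(X)}, which summed against the congruence-compatible $\bm r$ and the convergent series $\sum_{\bm b, \bm m} 1/(b_{012}^2 m_{012}^2) < +\infty$ gives $O(B^3 q^3 \log_2(B) / (\varphi(q)^3 (\log B)^{5/2}))$; the correction coming from the expansion of $(\log X_i)^{-1/2}$, which is of the same order; and extending the sum over $\bm b, \bm m$ from $[1, (\log B)^3]^3$ to $(\eZ_{>0})^3$, which costs only $O(B^3/(\log B)^{9/2})$ by tail bounds on $\sum 1/(b_{012}^2 m_{012}^2)$. The main obstacle will be verifying that the indicator conditions defining $\alpha(\bm b, \bm m, \bm r)$ mesh cleanly with the congruence system to produce exactly $\gamma_{\bm n}(\bm b, \bm m)$, a step where the squarefullness of $q$ and the assumption $\mu^2(m_{012}) = 1$ play essential roles.
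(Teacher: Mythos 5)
Your proposal follows essentially the same route as the paper: combine Lemma~\ref{réécriture Nn(B)} (with $C=3$) with Corollary~\ref{estimation Nbms(X)}, substitute the uniform asymptotic, sum $\alpha(\bm b, \bm m, \bm r)$ over the congruence-compatible $\bm r$ to recover $\gamma_{\bm n}(\bm b, \bm m)$, and collect errors using $\gamma_{\bm n} \leqslant q^3$, $\beta_q \ll 1$, and the convergence of $\sum (\log b_{012}m_{012})/(b_{012}m_{012})^2$. Your verification of the identity $\sum_{\bm r} \alpha(\bm b,\bm m,\bm r) = \gamma_{\bm n}(\bm b, \bm m)$ via squarefullness of $q$ and $\mu^2(m_{012})=1$ is correct and is precisely the (unstated) check the paper relies on. One small inaccuracy in your framing: the hypothesis $\prod_{p\mid q}\vartheta_p(\bm n)=1$ is not confined to the passage $N \to N'$ in Lemma~\ref{réécriture Nn(B)}; it is also a hypothesis of Proposition~\ref{réécriture de Nbms(X)} (used there to invoke the Hilbert product formula), and hence an implicit hypothesis of Corollary~\ref{estimation Nbms(X)} and of Proposition~\ref{prop3.14} itself. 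The paper does not make this explicit, but since the case $\prod_{p\mid q}\vartheta_p(\bm n)=0$ is irrelevant to Theorem~\ref{problème de comptage} ($N$ then vanishes), your argument — like the paper's — is sound where it matters.
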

\begin{proof}
We start by making $\gamma_{\bm n}(\bm b , \bm m)$ appear as \[ \sum_{\substack{{\bm r} \in (\eZ/q\eZ)^3 \\ b_i^2m_{ij}m_{ik}r_i = n_i}}\alpha({\bm b,\bm m,\bm r}) =  \gamma_{\bm n}(\bm b, \bm m). \] Combining Lemma \ref{réécriture Nn(B)} with Corollary \ref{estimation Nbms(X)} for $C = 3$, it follows that $N'({\bm n},q,\bm B) $ is equal to
 \begin{align*}
        &{2 B_{012}\over \varphi(q)^3 (2 \pi)^{3/2}}\sum_{ \substack{ \bm b, \bm m \in \left[ 1, (\log B)^C \right]^3  \\ \gcd(b_0,b_1,b_2) = 1 \\ \gcd(m_{ij},b_k) =  1  } }  { \beta_q({\bm b,\bm m})\gamma_{\bm n} ({\bm b,\bm m}) \tau(\gcd(\pimp{(m_{012})} , q)) \over  \tau( \pimp{(m_{012})})}{\mu^2(m_{012}) \over m_{012}^2} \\
        & \times \prod_{\{i,j,k\} = \{0,1,2\}} {b_i^{-2} \over \sqrt{\log\left( {B \over b_i^2 m_{ij}m_{ik}}\right) } } 
         + O \left( E(\bm B)\right) + O \left( {B_{012}  \over (\log B)^3} \right),
    \end{align*} 
where
\[ E(\bm B) = {B_{012} \log_2 B \over \varphi(q)^3}\!\!\!\!\!\!\! \sum_{\substack{ {\bm b,\bm m} \in \eN^3 \\ \max (b_i, m_{ij}) \leqslant (\log B)^3 }}  { 1 \over (b_{012}m_{012})^2  \log\left( \min  {B_i \over b_i^2 m_{ij}m_{ik}}\right)^{5/2} }. \]Since the sum is over $\max(b_i, m_{ij}) \leqslant (\log B)^3$, we find that 
\begin{align*}
  &   {1 \over \sqrt{\log\left( {B_0 \over b_0^2 m_{01}m_{02}}\right) } } {1 \over \sqrt{\log\left( {B_1 \over b_1^2 m_{01}m_{12}}\right) } } {1 \over \sqrt{\log\left( {B_2 \over b_2^2 m_{12}m_{02}}\right) } } \\ & = 
     \left(\prod_{0 \leqslant i \leqslant 2} {1 \over (\log B_i)^{1/2}} \right) \left( 1 + O \left( {\log (b_{012}m_{012}) \over \log B}   \right) \right),
\end{align*}
so $N'({\bm n},q,\bm B)$ equals

\begin{align*}
         & {2 \over \varphi(q)^3 (2 \pi)^{3/2} } \left( \prod_{0 \leqslant i \leqslant 2} {B_i \over (\log B_i)^{1/2}}\right) \! \! \! \!\!\!\!\! \!\!\!\sum_{ \substack{ \bm b, \bm m\in \left[1,(\log B)^3\right]^3  \\ \gcd(b_0,b_1,b_2) = 1\\ \gcd(m_{ij},b_k) =  1 } } \! \! \!\!\!\!\!\!\!\!\!\!\!\!\!  \mu^2(m_{012})  { \beta_q({\bm b,\bm m})\gamma_{\bm n}({\bm b,\bm m})\tau(\gcd(\pimp{(m_{012})} , q)) \over  b_{012}^2 m_{012}^2 \tau( \pimp{(m_{012})})} \\ & + O\left( {B_{012}  \over \varphi(q)^3 (\log B)^{5/2}}   \sum_{ {\bm b,\bm m} \in \eN^3 }  {  \beta_q(\bm b, \bm m) \gamma_{\bm n}(\bm b, \bm m) \log(b_{012}m_{012})  \over (b_{012}m_{012})^2  }\right)  \\ & + O \left( {B^3 \log_2 B \over \varphi(q)^3 (\log B)^{5/2}}   \sum_{ {\bm b,\bm m} \in \eN^3 }  {\log(b_{012}m_{012})  \over (b_{012}m_{012})^2  }\right) + O\left( {B^3  \over (\log B)^3} \right). 
    \end{align*} To handle the error terms, we use the bounds
\[  \beta_q({\bm b,\bm m}) \leqslant \displaystyle \prod_{p >2} \left( 1 - {1 \over p} \right)^{3/2} \left( 1 + {3 \over 2p} \right)  \ll 1, \] $\gamma_{\bm n}(\bm b, \bm m) \leqslant q^3$ and the estimate
\[  \sum_{ {\bm b,\bm m} \in \eN^3 }  { \log(b_{012}m_{012})  \over (b_{012}m_{012})^2  } \ll \sum_{k \in \eN} {k^\varepsilon \over k^2}  \]where the right hand-side is convergent. Finally, we ignore the condition $\max (b_i,m_{ij}) \leqslant (\log B)^3$ thanks, again, to the previous bounds.
\end{proof}

\section{Computing the constant}\label{sec III5}

In this section, we fix a square-full integer $q$ divisible by $8$ and we fix $\bm n \in \eN^3$ such that \newline $\max (v_2(n_0n_2),v_2(n_1n_2)) < v_2(q) - 2$ and $\max (v_p(n_0n_2),v_p(n_1n_2)) < v_p(q)$ for all odd prime $p$ dividing~$q$. We define

\begin{align*} 
    &c : = {1 \over \varphi(q)^3}  \prod_{p \mid q} \vartheta_p(\bm n) \cdot \!\!\!\!\!\!\!\!\!  \sum_{ \substack{ \bm b \in \eN^3  \\ \gcd(b_0,b_1,b_2) = 1  } } \!\! {1 \over b_{012}^2}  \sum_{\substack{\bm m\in \eN^3 \\ \gcd(m_{ij},b_k) = 1 } }  {\mu^2(m_{012}) \over m_{012}^2} {\tau(\gcd(\pimp{(m_{012})} , q)) \beta_q({\bm b,\bm m})\gamma_{\bm n}({\bm b,\bm m}) \over  \tau( \pimp{(m_{012})})}
\end{align*}where $\gamma_{\bm n}(\bm b, \bm m)$ is the quantity defined by $($\ref{gamma}$)$ and $\beta_q(\bm b, \bm m)$ is defined by (\ref{expr beta}). We prove the following result.

\begin{lemme}\label{partie impaire}
    We have \begin{align*} 
    &c  = {1 \over \varphi(q)^3} \delta_2 \prod_{p > 2} \left(1 - {1 \over p} \right)^{3/2} \delta_p,
\end{align*}
where the factor $\delta_p$ for $p \geqslant 2$ is defined by $($\ref{facteurs delta}$)$.
\end{lemme}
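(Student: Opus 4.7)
The plan is to express $c\,\varphi(q)^3$ as an Euler product and then match the local factors prime by prime against the claimed expression.

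I first observe that every ingredient of the summand is multiplicative in $(b_0,b_1,b_2,m_{01},m_{02},m_{12})$. Indeed, $\beta_q(\bm b,\bm m)$ is by definition an Euler product over $p>2$; the Chinese Remainder Theorem combined with the invariance of the count $\gamma_{\bm n}$ under multiplication of $\bm b,\bm m$ by triples of units mod $q$ yields $\gamma_{\bm n}(\bm b,\bm m)=\prod_{p\mid q}\gamma_{\bm n}^{(p)}(\bm b^{(p)},\bm m^{(p)})$, where $\bm b^{(p)},\bm m^{(p)}$ denote the $p$-parts; and $\mu^2(m_{012})$, the gcd indicators, the ratio $\tau(\gcd(\pimp{(m_{012})},q))/\tau(\pimp{(m_{012})})$ and $(b_{012}m_{012})^{-2}$ manifestly factor over primes. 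Hence
\[ c\,\varphi(q)^3=\Bigl(\prod_{p\mid q}\vartheta_p(\bm n)\Bigr)\prod_p S_p, \]
with $S_p$ a finite local sum. Unpacking the target, the lemma reduces to the three identities $S_2=1$, $S_p=(1-1/p)^{3/2}$ for odd $p\mid q$, and $S_p=(1-1/p)^{3/2}\delta_p$ for odd $p\nmid q$.

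For odd $p\nmid q$ the factor $\gamma_{\bm n}^{(p)}$ is trivially $1$ and the $p$-Euler factor of $\beta_q$ reads $(1-p^{-1})^{3/2}(1+N_p/(2p))$ with $N_p=\#\{0\leq i<j\leq 2:p\nmid m_{012}\gcd(b_i,b_j)\}$. The constraints $\min_i v_p(b_i)=0$, $\sum_{ij}v_p(m_{ij})\in\{0,1\}$ and $\gcd(m_{ij},b_k)=1$ leave only finitely many $p$-profiles. I would split the sum into Case A ($v_p(m_{ij})=0$ for all $i<j$, with sub-cases indexed by $\#\{i:v_p(b_i)>0\}\in\{0,1,2\}$) and Case B (exactly one $v_p(m_{ij})=1$, which forces $v_p(b_k)=0$), sum the resulting geometric series in the $b$-exponents with the correct value of $N_p$ in each sub-case, and reduce over the common denominator $2p(p^2-1)^2$. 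The numerator then simplifies via
\[ 2p^4+3p^3+5p^2+3p+2=(p^2+p+1)(2p^2+p+2) \]
to give exactly $S_p=(1-p^{-1})^{3/2}\delta_p$.

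For $p\mid q$ the Euler factor of $\beta_q$ at $p$ is the constant $(1-p^{-1})^{3/2}$ if $p>2$ and $1$ if $p=2$, so it suffices to show that the $p$-local sum weighted by $\gamma_{\bm n}^{(p)}$ equals $1$. Setting $e_i:=2v_p(b_i)+v_p(m_{ij})+v_p(m_{ik})$, the congruence $b_i^2m_{ij}m_{ik}r_i\equiv n_i\pmod{p^{v_p(q)}}$ is soluble if and only if $e_i\leq v_p(n_i)$, in which case it admits exactly $p^{e_i}$ lifts of $r_i$. The product $p^{e_0+e_1+e_2}=p^{2(v_p(b_{012})+v_p(m_{012}))}$ cancels the weight $(b_{012}m_{012})^{-2}$, and the $\tau$-ratio at $p$ is trivially $1$ since $v_p(m_{012})\leq 1\leq v_p(q)$. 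Thus every valid profile contributes $1$, and it remains to check that under $\gcd(\bm n,q)=1$, $\max(v_p(n_0n_2),v_p(n_1n_2))<v_p(q)$ (resp.\ $<v_p(q)-2$ for $p=2$), and the structural conditions $\mu^2(m_{012})=1$, $\gcd(m_{ij},b_k)=1$, $\min_iv_p(b_i)=0$, $\gcd(r_i,q,b_j,b_k)=1$ and $p^2\nmid r_{012}m_{012}$ in (\ref{gamma}), there is exactly one admissible $p$-profile. This will be verified by a finite case analysis on $(v_p(n_0),v_p(n_1),v_p(n_2))$ and reflects, at the $p$-adic level, the uniqueness of the decomposition $t_i=b_i^2m_{ij}m_{ik}\ell_i$ underlying Lemma \ref{réécriture Nn(B)}. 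The main technical effort is the odd $p\nmid q$ enumeration and its algebraic identification with $\delta_p$; the argument at $p\mid q$ is then a $p$-adic bookkeeping of the decomposition already used in the proof of Proposition \ref{prop3.14}.
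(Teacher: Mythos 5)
Your proposal is correct and follows essentially the same strategy as the paper: exploit the multiplicativity of $\gamma_{\bm n}$ (via Lemma \ref{réécriture gamma}) and of $\beta_q$ (via Lemma \ref{lemme reecriture beta}), write $c\,\varphi(q)^3 \big/ \prod_{p\mid q}\vartheta_p(\bm n)$ as an Euler product, and match local factors. The minor differences are presentational: the paper separates the $2$-adic contribution first and then handles $c_{\mathrm{odd}}$ as a product over odd primes, whereas you keep a uniform Euler product; and for odd $p\nmid q$ the paper simply cites \cite[lemma 5.24]{den}, while you propose to redo the finite case enumeration and identify the resulting rational function with $\delta_p$ via the factorisation $2p^4+3p^3+5p^2+3p+2=(p^2+p+1)(2p^2+p+2)$, which is correct and makes the argument self-contained. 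Your $p$-adic bookkeeping at $p\mid q$ — that the congruence admits $p^{e_i}$ lifts exactly cancelling the weight $(b_{012}m_{012})^{-2}$, so each admissible profile contributes $1$ and uniqueness of the admissible profile forces the local sum to equal $1$ — is the same computation the paper carries out concretely (e.g.\ its example with $v_2(n_0)$ even, $v_2(n_1),v_2(n_2)$ odd), so the two arguments agree.
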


 As in \cite[(5.22)]{LRS}, we separate the contribution associated with $2^{v_2(b_i)}$ and $2^{v_2(m_{ij})} $, from the contribution associated with $\pimp{\bm b}$ and $\pimp{\bm m}$. 

\paragraph{Separation of the contributions for $p = 2$ and for $p \neq 2$.} Firstly, we provide another expression for the factor $\gamma_{\bm n}(\bm b, \bm m)$ defined by $($\ref{gamma}$)$.

\begin{lemme}\label{réécriture gamma}
    Let $\bm b, \bm m \in \eN^3$. If there exists $\{i,j,k\} = \{0,1,2\}$ such that $\gcd(b_i^2 m_{ij}m_{ik}, q) \nmid n_i$, we have $\gamma_{\bm n}(\bm b, \bm m) = 0$. Otherwise we have the equality
    \begin{align*} \label{factorgamma} \tag{5.1}
\gamma_{\bm n}(\bm b, \bm m) = & \prod_{p \mid q} \left(  \prod_{\{i,j,k\} = \{0,1,2\}}  \!\!\!\!\!\!\!\!\!\!\!\! \gcd\left(b_i^2m_{ij}m_{ik},p^{v_p(q)}\right) \mathds{1}\left( v_p( b_i^2 m_{ij}m_{ik} ) = v_p(n_i) \right)  \right. \\
& + \!\!\!\!\!\!\!\!\! \prod_{\{i,j,k\} = \{0,1,2\}} \!\!\!\!\!\!\!\!\!\!\!\! \gcd\left(b_i^2m_{ij}m_{ik},p^{v_p(q)-\delta_{i,0} }\right)  \mathds{1}\left( \begin{tabular}{c}  $\gcd(p,b_1,b_2) =\gcd(p,m_{012})=1$ \\ $v_p( b_i^2 m_{ij}m_{ik} ) + \delta_{i,0} = v_p(n_i)$ \end{tabular} \right) \\
& + \!\!\!\!\!\!\!\!\! \prod_{\{i,j,k\} = \{0,1,2\}}\!\!\!\!\!\!\!\!\!\!\!\! \gcd\left(b_i^2m_{ij}m_{ik},p^{v_p(q)-\delta_{i,1} }\right) \mathds{1}\left( \begin{tabular}{c}  $\gcd(p,b_0,b_2) =\gcd(p,m_{012})=1$ \\ $v_p( b_i^2 m_{ij}m_{ik} ) + \delta_{i,1} = v_p(n_i)$ \end{tabular} \right) \\
& + \left. \!\!\!\!\!\!\!\!\! \prod_{\{i,j,k\} = \{0,1,2\}} \!\!\!\!\!\!\!\!\!\!\!\!  \gcd\left(b_i^2m_{ij}m_{ik},p^{v_p(q)-\delta_{i,2} }\right) \mathds{1}\left( \begin{tabular}{c}  $\gcd(p,b_0,b_1) =\gcd(p,m_{012})=1$ \\ $v_p( b_i^2 m_{ij}m_{ik} ) + \delta_{i,2} = v_p(n_i)$ \end{tabular} \right) \right),
\end{align*}where $\delta_{i,j}$ denotes the Kronecker symbol. In particular, the arithmetic function defined by 
\[ (\bm b, \bm m) \mapsto \mu^2(m_{012})\gamma_{\bm n}(\bm b, \bm m) \mathds{1}\left( \begin{tabular}{c}
    $\gcd( b_0,b_1,b_2) = 1$    \\
    $\forall \{i,j,k\} = \{0,1,2\}, \; \gcd( b_i, m_{jk} ) = 1$ \\
     $\forall \{i,j,k\} = \{0,1,2\}, \; \gcd(b_i^2 m_{ij}m_{ik}, q) \mid n_i$
\end{tabular} \right)\]is multiplicative.
\end{lemme}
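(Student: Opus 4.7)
The plan is to exploit the Chinese Remainder Theorem so that counting $\bm r$ modulo $q$ factors as a product over the primes dividing $q$. First I would dispose of the vacuous case: the congruence $b_i^2 m_{ij} m_{ik} r_i \equiv n_i \Mod{q}$ is solvable in $r_i$ if and only if $\gcd(b_i^2 m_{ij} m_{ik}, q) \mid n_i$, so if this divisibility fails for some index then $\gamma_{\bm n}(\bm b, \bm m) = 0$. Otherwise, by CRT I can write
\[
\gamma_{\bm n}(\bm b, \bm m) = \prod_{p \mid q} \gamma_{\bm n}^{(p)}(\bm b, \bm m),
\]
where $\gamma_{\bm n}^{(p)}(\bm b, \bm m)$ counts the triples $\bm r$ modulo $p^{v_p(q)}$ satisfying the three conditions of $(\ref{gamma})$ localised at $p$.

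Next I would analyse each local factor $\gamma_{\bm n}^{(p)}$. The condition $p^2 \nmid r_{012} m_{012}$ splits into four mutually exclusive sub-cases according to which among $r_0, r_1, r_2$ contributes a factor of $p$ (the possibility $p \mid m_{012}$ forces $p \nmid r_{012}$ and is absorbed into the first sub-case; note that $p^2 \mid m_{012}$ would kill the factor, which is compatible with the formula). In each sub-case, the constraint $b_i^2 m_{ij}m_{ik} r_i \equiv n_i \Mod{p^{v_p(q)}}$ prescribes both the number of admissible $r_i$ modulo $p^{v_p(q)}$, namely $\gcd(b_i^2 m_{ij}m_{ik}, p^{v_p(q)})$, and a unique $p$-adic valuation $v_p(r_i) = v_p(n_i) - v_p(b_i^2 m_{ij}m_{ik})$ of any such $r_i$; matching this with the sub-case assumption produces the indicator $\mathds{1}(v_p(b_i^2 m_{ij}m_{ik}) + \delta_{i,\ell} = v_p(n_i))$. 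The coprimality condition $\gcd(r_i, q, b_j, b_k) = 1$ is automatic when $v_p(r_i) = 0$, but becomes $\gcd(p, b_j, b_k) = 1$ as soon as $v_p(r_i) \geqslant 1$, which is where the additional indicator $\mathds{1}(\gcd(p, b_j, b_k) = 1)$ shows up in the last three terms. Finally, fixing $v_p(r_\ell) = 1$ reduces the gcd by one power of $p$, producing the $\delta_{i,\ell}$ shift in $\gcd(b_i^2 m_{ij}m_{ik}, p^{v_p(q) - \delta_{i,\ell}})$.

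Summing the four contributions at each prime $p \mid q$ then yields $(\ref{factorgamma})$. The multiplicativity of the arithmetic function stated at the end of the lemma is then immediate: $\mu^2(m_{012})$, together with the three coprimality conditions $\gcd(b_0, b_1, b_2) = 1$, $\gcd(b_i, m_{jk}) = 1$ and the divisibilities $\gcd(b_i^2 m_{ij} m_{ik}, q) \mid n_i$, are all manifestly multiplicative in $(\bm b, \bm m)$, while $\gamma_{\bm n}(\bm b, \bm m)$ itself factors by $(\ref{factorgamma})$ into a product of $p$-local contributions depending only on the $p$-parts of $\bm b$ and $\bm m$. The main obstacle is the bookkeeping of the four-way case split at each prime; the hypothesis $v_p(n_i) < v_p(q)$ coming from the surrounding proposition is what keeps the analysis clean, since it guarantees that each solvable local congruence has a uniquely prescribed $p$-adic valuation for $r_i$, so the valuation indicator is well defined in every sub-case.
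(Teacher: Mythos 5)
Your proof is correct and takes essentially the same route as the paper's: the paper parametrises by $\bm d = (\gcd(q,r_i))_{0\leqslant i\leqslant 2}$, recognises the resulting sum as a Dirichlet convolution of multiplicative functions of $\bm d$, and factors it as an Euler product over $p \mid q$, which is precisely the CRT-localisation and four-way case split you carry out directly. One small imprecision worth noting: your parenthetical claim that ``$p^2 \mid m_{012}$ would kill the factor'' is not actually borne out by the right-hand side of $($\ref{factorgamma}$)$, whose first product can remain nonzero in that case; this is immaterial for the lemma since the weight $\mu^2(m_{012})$ in the final multiplicativity statement disposes of it.
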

\begin{proof}
    
We assume that for any $\{i,j,k\} = \{0,1,2\}$ we have $\gcd(b_i^2 m_{ij}m_{ik}, q) \mid n_i$. This ensures that the equation $b_i^2 m_{ij} m_{ik} r_i \equiv n_i \Mod{q}$ has solutions $r_i \in (\eZ /q\eZ)$. By considering the different values of $d_i :=\gcd(q,r_i)$, the quantity $\gamma_{\bm n}(\bm b,\bm m)$ can be written as

\[ \sum_{ \substack{ \bm d \in \eN^3 \\\forall i \in \{0,1,2\}, \; d_i \mid q}} \!\!\!\!\!\!\! \mu^2(d_{012})  \prod_{\{i,j,k\} = \{0,1,2\}} \!\!\!\!\!\!\gcd\left(b_i^2m_{ij}m_{ik},{q \over d_i}\right)   \mathds{1}\left( \begin{tabular}{c}  $\gcd(d_i,b_j,b_k) =\gcd(d_i,m_{012})=1$ \\ $\forall p \mid d_i, \; v_p( b_i^2 m_{ij}m_{ik} d_i) = v_p(n_i)$ \end{tabular} \right).\]Note that we used that $\max v_p(n_i) < v_p(q) $ for $p \mid q$, which ensures that the congruence $b_i^2 m_{ij} m_{ik} r_i \equiv n_i \Mod{q}$ implies $v_p(b_i^2 m_{ij} m_{ik} d_i) = v_p(n_i)$. We recognise the convolution of the two multiplicative functions
\[ \bm d \mapsto  \mu^2(d_{012})  \mathds{1}\left( \begin{tabular}{c}  $\gcd(d_i,b_j,b_k) =\gcd(d_i,m_{012})=1$ \\ $\forall p \mid d_i, \; v_p( b_i^2 m_{ij}m_{ik} d_i) = v_p(n_i)$ \end{tabular} \right)\]and
\[ \bm d \mapsto \prod_{\{i,j,k\} = \{0,1,2\}}\gcd\left(b_i^2m_{ij}m_{ik},d_i\right)   \]and equality (\ref{factorgamma}) follows. The multiplicativity comes from the fact that~$\bm n$ is fixed, so that for each $p$ dividing~$q$ the factor associated with $p$ in (\ref{factorgamma}) contains at most one non-zero term.

\end{proof}

We now focus on the sum 
\[  \sum_{ \substack{ \bm b \in \eN^3  \\ \gcd(b_0,b_1,b_2) = 1  } } \!\! {1 \over b_{012}^2}  \sum_{\substack{\bm m\in \eN^3 \\ \gcd(m_{ij},b_k) = 1 } }  {\mu^2(m_{012}) \over m_{012}^2} {\tau(\gcd(\pimp{(m_{012})} , q)) \beta_q({\bm b,\bm m})\gamma_{\bm n}({\bm b,\bm m}) \over  \tau( \pimp{(m_{012})})}.\]
\paragraph{Contribution for $p = 2$.} 

As $\beta_q(\bm b,\bm m)$ only depends on $\pimp{\bm b}$ and on $\pimp{\bm m}$, we only have to compute
\[ \sum_{\substack{ (\beta_0, \beta_1, \beta_2) \in \eN^3  \\ \min(\beta_0,\beta_1,\beta_2) = 0 } } {1 \over 4^{\beta_0 + \beta_1 + \beta_2}}  \sum_{\substack{(\mu_{12}, \mu_{02} , \mu_{01} )\in \eN^3 \\ \min(\mu_{ij},\beta_k) = 0 \\ 0 \leqslant \mu_{12}+ \mu_{02} +\mu_{01} \leqslant 1 } }  {\gamma_{\bm n}({(2^\beta),(2^\mu) })  \over 4^{ \mu_{12}+ \mu_{02} +\mu_{01} } }.\]By assumption, the triple $\bm n$ is fixed and it satisfies in particular $\max v_2(n_i) < v_2(q) - 2$. Thus, when we replace $\gamma_{\bm n}$ by its expression (\ref{factorgamma}), we can observe that only one term in the previous sum is non-zero. For instance, if $v_2(n_0)$ is even, and $v_2(n_1)$, $v_2(n_2)$ are odd, there exists only one couple $(\bm \beta, \bm \mu)$ with $0 \leqslant \mu_{12} + \mu_{02} + \mu_{01} \leqslant 1$ such that $v_2(n_i) = 2\beta_i + \mu_{ij} + \mu_{ik}$ for all $\{i,j,k\} = \{0,1,2\}$. For this couple, since $v_2(n_i) < v_2(q)-2$, we must have $\gcd( 2^{2\beta_i + \mu_{ij}+ \mu_{ik}}, 2^{v_2(q)}) = 2^{2\beta_i + \mu_{ij}+ \mu_{ik}} $, hence $\gamma_{\bm n} ((2^\beta), (2^\mu)) = 4^{\beta_0+\beta_1+\beta_2} \times 4^{\mu_{01} + \mu_{02} + \mu_{01}}$. The other cases are similar and it follows that
\[ \sum_{\substack{ (\beta_0, \beta_1, \beta_2) \in \eN^3  \\ \min(\beta_0,\beta_1,\beta_2) = 0 } } {1 \over 4^{\beta_0 + \beta_1 + \beta_2}}  \sum_{\substack{(\mu_{12}, \mu_{02} , \mu_{01} )\in \eN^3 \\ \min(\mu_{ij},\beta_k) = 0 \\ 0 \leqslant \mu_{12}+ \mu_{02} +\mu_{01} \leqslant 1 } }  {\gamma_{\bm n}({(2^\beta),(2^\mu) })  \over 4^{ \mu_{12}+ \mu_{02} +\mu_{01} } }=1.\]

\paragraph{Contribution for $p > 2$.}We introduce new variables $\bm b' $ and $\bm m'$ to denote $\pimp{\bm b}$ and $\pimp{\bm m}$, and we aim to write 
\[ c_{\text{odd}} := \sum_{ \substack{ \bm b' \in \eN^3  \\ \gcd(b_0',b_1',b_2') = 1 \\ 2 \nmid b_{012}'  } } {1 \over (b_{012}')^2}  \sum_{\substack{\bm m' \in \eN^3 \\ \gcd(m_{ij}',b_k') = 1 \\ 2 \nmid m_{012}' } }  {\mu^2(m_{012}') \over (m_{012}')^2} {\tau(\gcd(m_{012}' , q)) \beta_q({\bm b',\bm m'})\gamma_{\bm n}({\bm b',\bm m'}) \over  \tau( m_{012}')} \]as an infinite product over $p > 2$. In order to achieve this goal, we relate the summand to a multiplicative function by writing the quantity $\beta_q$ conveniently. As in \cite[(5.23)]{LRS}, we obtain the following result. 

\begin{lemme}\label{lemme reecriture beta} For $i,j,k$ such that $\{ i,j,k \} = \{0,1,2\}$, we define $\widetilde{\delta}_i(\bm b', \bm m') := m_{012}' \gcd(b_j',b_k') $. Let \[ \kappa_q := \prod_{p \nmid q} \left( 1 - {1 \over p} \right)^{3/2} \left( 1 + { 3 \over 2p } \right).\] We can write $\beta_q({\bm b',\bm m'})$ as 
\begin{align*}
    \beta_q({\bm b',\bm m'})
    &= h({\bm b', \bm m'})\kappa_q \prod_{\substack{p > 2 \\ p \mid q }}\left( 1 - {1 \over p}\right)^{3/2},
\end{align*} 
where
\[ h({\bm b', \bm m'}) := \prod_{\substack{p \nmid q \\p \mid \widetilde{\delta}_0(\bm b', \bm m')\widetilde{\delta}_1(\bm b', \bm m')\widetilde{\delta}_2(\bm b', \bm m')}} \left( 1 - { \# \{i \in \{0,1,2\} : p \mid \widetilde{\delta}_i(\bm b', \bm m') \} \over 2p+3} \right) \]defines a multiplicative function in $\bm b'$ and $ \bm m'$.

\end{lemme}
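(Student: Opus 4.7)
The plan is to reorganize the infinite product defining $\beta_q(\bm b', \bm m')$ by splitting it according to whether a prime $p$ divides $q$ or is coprime to $q$, and then isolating a $(\bm b', \bm m')$-independent part (giving $\kappa_q$ and the finite prefactor $\prod_{p > 2, \, p \mid q}(1 - 1/p)^{3/2}$) from a $(\bm b', \bm m')$-dependent part (giving $h$).

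First I would handle the primes $p > 2$ with $p \mid q$. For such primes the condition $p \nmid m_{012}' \gcd(b_i', b_j') q$ is automatically violated, so the count appearing in (\ref{expr beta}) is zero and the local factor collapses to $(1 - 1/p)^{3/2}$. This produces exactly the announced prefactor $\prod_{p > 2, \, p \mid q}(1 - 1/p)^{3/2}$.

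Next I would treat the odd primes $p \nmid q$. Using the bijection $\{(i,j) : 0 \leqslant i < j \leqslant 2\} \leftrightarrow \{0,1,2\}$ defined by $(i,j) \mapsto k$ with $\{i,j,k\} = \{0,1,2\}$, the combinatorial cardinality appearing in (\ref{expr beta}) becomes $3 - N_p$, where I set $N_p := \#\{k : p \mid \widetilde{\delta}_k(\bm b', \bm m')\}$. The key algebraic identity is then
\[
1 + \frac{3 - N_p}{2p} = \left( 1 + \frac{3}{2p} \right)\left( 1 - \frac{N_p}{2p + 3} \right),
\]
which one checks by clearing denominators. Multiplying through by $(1 - 1/p)^{3/2}$ exhibits the local factor as the product of $(1 - 1/p)^{3/2}(1 + 3/(2p))$, which assembles over $p \nmid q$ into $\kappa_q$, and the correction $(1 - N_p/(2p+3))$, which assembles into $h(\bm b', \bm m')$. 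The product defining $h$ can be extended from primes dividing $\widetilde{\delta}_0 \widetilde{\delta}_1 \widetilde{\delta}_2$ to all primes $p \nmid q$ without change, since the correction factor equals $1$ precisely when $N_p = 0$.

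Finally, multiplicativity of $h$ is built into this Eulerian form: each local factor depends only on the values $v_p(\widetilde{\delta}_i)$, and for coprime pairs $(\bm b'_1, \bm m'_1), (\bm b'_2, \bm m'_2)$ at most one of them contributes positive $p$-adic valuation to any given $\widetilde{\delta}_i$. No real obstacle is expected here; the lemma is purely a bookkeeping manipulation, and the only thing to watch is that $8 \mid q$ guarantees $p = 2$ never enters $\kappa_q$ or $h$, so that the exclusion $p > 2$ in $\beta_q$ is compatible with the factorisation.
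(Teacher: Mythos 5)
Your proof is correct and takes the same route the paper intends (the paper merely states the lemma by analogy with \cite[5.23]{den} and gives no proof, so you are supplying the bookkeeping it omits). The decomposition into primes $p > 2$ dividing $q$ versus $p \nmid q$, the identification of the count in (\ref{expr beta}) with $3 - N_p$ via the bijection $(i,j)\mapsto k$, the algebraic identity $1 + (3-N_p)/(2p) = (1 + 3/(2p))(1 - N_p/(2p+3))$, and the observation that the factor is trivial when $N_p = 0$ are exactly the points one must check. Your multiplicativity argument is also sound: under the coprimality convention of the paper each $\widetilde{\delta}_i$ is itself multiplicative in $(\bm b', \bm m')$ (since $\gcd(b_{1,j}'b_{2,j}', b_{1,k}'b_{2,k}') = \gcd(b_{1,j}', b_{1,k}')\gcd(b_{2,j}', b_{2,k}')$ when the two triples of $b$'s are coprime), so each local factor of $h$ splits as required, with the factor from the ``other'' component being $1$. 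The remark that $8 \mid q$ keeps $p = 2$ out of $\kappa_q$ and $h$ is the right sanity check on the three products all ranging over odd primes.
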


\begin{cor}We have 
    \[ c_{\text{odd}} =  \left(\prod_{\substack{p > 2 \\p \mid q}}\left(1 - {1 \over p} \right)^{3/2} \right) \left(\prod_{p \nmid q}\left(1 - {1 \over p} \right)^{3/2} {\left(1 + {1 \over p} + {1 \over p^2}\right)\left(1 + {1 \over 2p} + {1 \over p^2}  \right) \over \left(1 - {1 \over p^2}\right)^2} \right).\]
\end{cor}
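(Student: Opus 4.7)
The plan is to substitute the factorisation of $\beta_q$ from Lemma \ref{lemme reecriture beta} into the definition of $c_{\text{odd}}$, pull out the prime-independent constants $\kappa_q$ and $\prod_{p>2,\, p\mid q}(1-1/p)^{3/2}$, and then exploit the multiplicativity of the remaining summand to express the residual sum as an Euler product over odd primes. Once this is done, the corollary reduces to computing the local factor at each prime $p > 2$.

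First, substituting $\beta_q(\bm b', \bm m') = h(\bm b',\bm m')\kappa_q \prod_{p>2,\,p\mid q}(1-1/p)^{3/2}$ yields $c_{\text{odd}} = \kappa_q \prod_{p>2,\,p\mid q}(1-1/p)^{3/2} \cdot S$, where $S$ is the remaining double sum. The summand of $S$ is multiplicative in $(\bm b', \bm m')$: each of $1/(b'_{012})^2$, $\mu^2(m'_{012})/(m'_{012})^2$, and $\tau(\gcd(m'_{012},q))/\tau(m'_{012})$, along with the coprimality and parity constraints, decomposes over primes; $h$ is multiplicative by Lemma \ref{lemme reecriture beta}; and $\mu^2(m'_{012})\gamma_{\bm n}(\bm b', \bm m')$ together with its indicator is multiplicative by Lemma \ref{réécriture gamma}. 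Hence $S = \prod_{p>2} L_p$, where $L_p$ is a finite sum over tuples of $p$-adic valuations $(\beta_0,\beta_1,\beta_2,\mu_{01},\mu_{02},\mu_{12})$ subject to $\min_i \beta_i = 0$, $\min(\mu_{ij}, \beta_k) = 0$ for each $\{i,j,k\} = \{0,1,2\}$, and $|\bm\mu| \leqslant 1$.

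Next I would compute $L_p$ prime by prime. For $p \mid q$ with $p > 2$, the local contribution of $h$ equals $1$ since $h$ is supported on primes coprime to $q$, and the $p$-factor of $\tau(\gcd(m'_{012},q))/\tau(m'_{012})$ also equals $1$ because $|\bm\mu|\leqslant 1 < v_p(q)$; thus $L_p$ reduces to $\sum_{(\bm\beta,\bm\mu)} \gamma_{\bm n,p}(\bm\beta,\bm\mu)\, p^{-2(|\bm\beta|+|\bm\mu|)}$, where $\gamma_{\bm n,p}$ denotes the $p$-factor in (\ref{factorgamma}). Using the hypothesis $\max(v_p(n_0n_2),v_p(n_1n_2)) < v_p(q)$, a case-by-case check of the four indicator terms of (\ref{factorgamma}) against the admissible configurations shows that at most one configuration contributes to $\gamma_{\bm n,p}$, with value precisely the reciprocal of the corresponding $p$-adic weight; the upshot is $L_p = 1$. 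For $p \nmid q$, on the other hand, $\gamma_{\bm n,p} \equiv 1$ since the product in (\ref{factorgamma}) runs only over primes dividing $q$, and the $\tau$-ratio simplifies to $1/\tau(p^{|\bm\mu|})$; the local factor becomes a closed-form expression in $p$ built from the values of $h_p$ given in Lemma \ref{lemme reecriture beta}. Combining $L_p$ with the local factor $(1-1/p)^{3/2}(1+3/(2p))$ of $\kappa_q$ should then produce the claimed Euler factor of the corollary.

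The delicate step will be the explicit local calculation at primes $p \nmid q$: all admissible tuples $(\bm\beta,\bm\mu)$ must be enumerated, each weighted by $h_p = 1 - \#\{i : p \mid \widetilde{\delta}_i(\bm b',\bm m')\}/(2p+3)$ where the indicator is nontrivial, and the resulting finite sum then collapsed to the rational function $(1+1/p+1/p^2)(1+1/(2p)+1/p^2)/(1-1/p^2)^2$ after absorbing the $\kappa_q$ contribution. Since the constraint $|\bm\mu|\leqslant 1$ restricts $(\mu_{01},\mu_{02},\mu_{12})$ to only four possibilities and the conditions $\min_i\beta_i = 0$ together with $\min(\mu_{ij},\beta_k) = 0$ sharply limit the remaining freedom, the calculation proceeds as a bounded case analysis directly analogous to the one carried out in the conic-bundle setting of \cite{den}.
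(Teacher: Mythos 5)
Your proposal is correct and follows the same strategy as the paper: substitute the factorisation of $\beta_q$ from Lemma \ref{lemme reecriture beta}, invoke the multiplicativity structure supplied by Lemma \ref{réécriture gamma} to write $c_{\text{odd}}$ as an Euler product, verify that the local factor at each odd $p\mid q$ collapses to $1$ (paralleling the $p=2$ computation in the text, since $h\equiv 1$ and the $\tau$-ratio is trivial there), and for $p\nmid q$ reduce to the local computation already carried out in \cite[lemma 5.24]{den}. This matches the paper's argument, which carries out exactly these steps in more compressed form.
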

\begin{proof}
    We combine  Lemma \ref{lemme reecriture beta} and Lemma \ref{réécriture gamma}. If $p \mid q$ is odd, we note that $h((p^\beta), (p^\mu)) = 1$. Similarly to the case $p=2$, we can compute $\gamma_{\bm n}((p^\beta) , (p^\mu))$ and it follows that the factor corresponding to $p$ in $c_\text{odd}$ is $\left(1 - {1 \over p} \right)^{3/2}$. If~$p \nmid q$, we recover exactly the same factor as in  \cite[lemma 5.24]{LRS}, thus concluding the calculation of $c_\text{odd}$ and~$c$.
\end{proof}
 
This completes the proof of Theorem \ref{problème de comptage}.

\section{Proof of Theorem \ref{THA}.}\label{sec III6}

We begin by stating, without proof, a result from \cite{DLS}. Let $\omega : [1 , + \infty)^3 \to~\eC$ be a $C^1$ map and for all $q \in \eN$ and $\bm a \in (\eZ / q \eZ)^3$ let $\rho(\bm a, q)$ be a complex number. Let $g : \eN^3 \to \eC$ be an arithmetic function. For $B \in \eR_{\geqslant 1}$, we introduce the quantity

\begin{equation*}\label{E}\tag{6.1} E(B, q) := \sup_{ \bm B \in [1, B]^3} \max_{\substack{ \bm a  \in (\eZ / q \eZ)^3 \\ \gcd (\bm a, q) = 1}} \abs{  \sum_{\substack{ \bm m \in  \prod_{i= 0}^2 \eN \cap [1, B_i] \\ \bm m \equiv \bm a \Mod{q} }  } g(\bm m) - \rho(\bm a,q)  \int_{\prod_{i=0}^2 [1, B_i] } \omega( \bm t) \mathrm{d} \bm t }.\end{equation*}This quantity is a way to measure the equidistribution of $g$ in arithmetic progressions.\\

\begin{defin}\label{def W et eps}
    Let $z \geqslant 2$ and $p \in \cP \mapsto m_p(z) \in [1 , + \infty) $. We define 
    \[ W_z := \prod_{p \leqslant z} p^{m_p(z)}, \; \varepsilon(z) :=  \sum_{p \leqslant z} {1 \over p^{m_p(z)/2 } } \quad  \text{ and } \quad \widetilde{\varepsilon}(z) := \sum_{p \leqslant z} {1 \over p^{m_p(z) + 1} }. \]
\end{defin}Following \cite{DLS}, we choose 
\begin{equation*}\label{m_p}\tag{6.2}
    m_p(z) :=  \lceil z \rceil + 2 \mathds{1}_{p=2},
\end{equation*}so that $m_2(z) \geqslant 3$ and $\varepsilon(z)$, $\widetilde{\varepsilon}(z) \to 0$ when $z $ goes to $ + \infty$. In what follows, $\mathcal{K}$ is a subset of $[-1,1]^{n+1}$ satisfying 
\begin{equation*}\label{boite}\tag{6.3} \mathcal{K} := \prod_{j = 0}^n [u_j , u_j'] \subset [-1,1]^{n+1} \text{ with } \max_{0 \leqslant j \leqslant n} \abs{u_j - u_j'} \leqslant 1.\end{equation*} The main result we are using is the following

\begin{tho}[Destagnol--Lyczak--Sofos]\cite[th. 2.4]{DLS}\label{theoreme kevin}
    Let $n \in \eN$ and $(F_0,F_1,F_2)$ be three homogeneous polynomials as in Definition \ref{polynomes de birch} where $d \geqslant 1$ denotes their common degree. Let $\boldsymbol{\epsilon} \in \{-1,1\}^3$. Let $z \geqslant 2$ and $W_z$, $\widetilde{\varepsilon} (z)$ as in Definition \ref{def W et eps}. Let $g : \eN^3 \to \eC$ be an arithmetic function. Then there exist constants $c, \delta > 0$ such that for all $B \geqslant 1$ we have the estimate
    \begin{align*} {1 \over B^{n+1}} \sum_{\substack{ \bm k \in \eZ^{n+1} \cap B\mathcal{K} \\ \min_j \epsilon_j F_j(\bm k) > 0  }} g(\epsilon_0F_0(\bm k), \epsilon_1 F_1(\bm k), \epsilon_2 F_2(\bm k)) = & \sum_{ \bm r \in (\eZ / W_z)^{n+1} } { \rho( (\epsilon_i F_i(\bm r)), W_z) \over W_z^{n-2} } \\
    & \times \!\!\!\!\!\!\!\!\!\! \underset{ \substack{ \bm \nu \in \mathcal{K} \\ \min_j \epsilon_j F_j(\bm \nu) > B^{-d} } }{\int} \!\!\!\!\!\!\!\!\!\omega( B^d \epsilon_0 F_0(\bm \nu) , B^d \epsilon_1 F_1(\bm \nu) ,  B^d \epsilon_2 F_2(\bm \nu) ) \mathrm{d}\bm \nu \\
    & + O \left( {\abs{\abs{g}}_1 \over B^{3d} }( B^{-\delta} + \widetilde{\varepsilon}(z) + z^{-c} ) + {E( bB^d , W_z ) W_z^3 \over B^{3d} }  \right),
    \end{align*}
    where the implicit constant depends at most on the $F_i$ and  
    \[ \abs{\abs{g}}_1 = \sum_{\substack{\bm t \in \eN^3 \cap [1 , bB^d]^3 } } \abs{g (\bm t)}, \]and $b=2 \max_i ( \max \abs{F_i(\mathcal{K})})$.
\end{tho}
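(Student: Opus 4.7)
The plan is to prove this theorem of Destagnol--Lyczak--Sofos by combining a $W$-trick--style local decomposition with the Hardy--Littlewood circle method \`a la Birch for the system $(F_0,F_1,F_2)$. The hypothesis on $E(B,W_z)$ provides control of $g$ in arithmetic progressions modulo $W_z$ by its continuous-plus-local model $\rho\cdot\omega$, while the Birch-system inequality $(n+1-\sigma(\bm F))/2^{d-1} > (R+1)(R+2)(d-1)$ with $R=2$ enables the asymptotic counting of lattice points $\bm k$ with $\bm F(\bm k)$ prescribed in given residue classes modulo $W_z$.

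The first step is to partition $\bm k \in B\mathcal{K}\cap \eZ^{n+1}$ according to its residue class $\bm\ell \in (\eZ/W_z\eZ)^{n+1}$. Since $\bm F(\bm k) \equiv \bm F(\bm\ell) \pmod{W_z}$, the values of $\bm F(\bm k)$ on the class $\bm\ell$ are pinned to the residue $\bm a=(\epsilon_i F_i(\bm\ell))_i$, and the definition of $E(bB^d, W_z)$ allows one to replace the inner sum over $\bm k$ by the expected approximation driven by $\rho(\bm a, W_z)$ and $\omega$. One is then reduced to evaluating the smoothly-weighted count
\[ S_{\bm\ell}(B) := \sum_{\substack{\bm k \in B\mathcal{K}\cap \eZ^{n+1}\\ \bm k \equiv \bm\ell \;(\bmod W_z)\\ \min_j \epsilon_j F_j(\bm k) > 0}} \omega(\epsilon_0 F_0(\bm k),\epsilon_1 F_1(\bm k), \epsilon_2 F_2(\bm k)), \]
which, by the smoothly-weighted circle method for a Birch system, satisfies
\[ S_{\bm\ell}(B) \;=\; \frac{B^{n+1}}{W_z^{n+1}} \int_{\substack{\bm\nu \in \mathcal{K}\\ \min\epsilon_j F_j(\bm\nu)>B^{-d}}} \omega\bigl(B^d \epsilon_0 F_0(\bm\nu),B^d\epsilon_1 F_1(\bm\nu),B^d \epsilon_2 F_2(\bm\nu)\bigr)\,\mathrm{d}\bm\nu \;+\; (\text{error}). \]
Summing over $\bm\ell$ and grouping classes by their image $\bm F(\bm\ell)\bmod W_z$ converts the joint $W_z$-factors into the announced $W_z^{-(n-2)}$ in the denominator of the main term. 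The three intrinsic error contributions are then the minor-arc saving $B^{-\delta}$ from Birch, the tail $\widetilde\varepsilon(z)$ from truncating the singular series to primes $\leqslant z$, and the tail $z^{-c}$ from the singular integral; the contribution $E(bB^d, W_z) W_z^3/B^{3d}$ comes directly from the defect arising in the $W$-trick replacement, aggregated over the $W_z^{n+1}$ residue classes.

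The main obstacle is the execution of the minor-arcs analysis uniformly in the modulus $W_z$: one needs the exponential sum bound for $\sum_{\bm k\equiv \bm\ell\,(\bmod W_z)} e(\bm\alpha\cdot\bm F(\bm k))$ to save a power $B^{-\delta}$ over the trivial bound $B^{n+1}/W_z^{n+1}$, uniformly in $\bm\ell$ and for $W_z$ as large as the choice \eqref{m_p} allows. Since $m_p(z)=\lceil z\rceil+2\mathds{1}_{p=2}$ makes $W_z$ grow only polynomially in $z$, taking $z$ a small power of $\log B$ keeps $W_z$ small enough that the Birch-system saving survives the $W_z$-loss. A secondary technical point is the sign constraint $\min_j \epsilon_j F_j(\bm k)>0$: the region where $\min_j \epsilon_j F_j(\bm\nu)\leqslant B^{-d+\eta}$ has volume $O(B^{-\eta})$ by $C^1$-regularity of $\bm F$, so its contribution can be absorbed in the error, leaving the integral over $\min\epsilon_j F_j(\bm\nu)>B^{-d}$ in the main term.
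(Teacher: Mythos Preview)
The paper does not prove this theorem: it is stated explicitly as a result ``recalled without any proof'' from \cite{DHS2}, so there is no proof in the present paper to compare your attempt against. Your sketch is therefore not competing with anything here; it is an outline of how the cited result might be established.

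As an outline, your strategy (split $\bm k$ by residue class modulo $W_z$, replace $g$ by $\rho\cdot\omega$ via the hypothesis on $E$, then evaluate the resulting weighted lattice sum by the circle method for the Birch system) is the natural one and is broadly consistent with the shape of the error term. One factual slip: with $m_p(z)=\lceil z\rceil+2\mathds{1}_{p=2}$ one has $\log W_z \asymp z\,\theta(z)\asymp z^2$, so $W_z$ grows like $\exp(z^2)$, not polynomially in $z$; this matters for the range of $z$ in which the Birch saving survives the $W_z$-losses (and is why, in the application later in the paper, one takes $z\asymp (\log_2 B)^{1/2}$). Your attribution of the three error pieces $B^{-\delta}$, $\widetilde\varepsilon(z)$, $z^{-c}$ to minor arcs, singular-series truncation, and singular-integral tail is plausible but should be regarded as heuristic without the details of \cite{DHS2}. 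If you intend to supply a self-contained proof, the substantive work is the uniform-in-$W_z$ minor-arc bound and the careful bookkeeping that turns the $W_z^{n+1}$ residue classes into the displayed $W_z^{-(n-2)}$; both are only asserted in your sketch.
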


The following result will also be useful. This is a corollary of theorem \ref{theoreme kevin} for the indicator function of the set $\mathcal{A}$.

\begin{lemme}\label{lemme utile crible}
    Keep the setting of theorem \ref{theoreme kevin}. Let $\mathcal{A} \subset \left[ 1, bB^{d}\right]^3$. Then we have
    \[ { \#\{ \bm k \in \eZ^{n+1} \cap B\mathcal{K} : \min_j \epsilon_j F_j(\bm k) > 0 \text{ and } (\epsilon_0F_0(\bm k) , \epsilon_1 F_1(\bm k) , \epsilon_2 F_2(\bm k) ) \in \mathcal{A}  \} \over B^{n+1} } \ll {\# \mathcal{A} \over B^{3d} },\]where the implicit constant can depend on the  $F_i$ but does not depend on $\mathcal{A}$.
\end{lemme}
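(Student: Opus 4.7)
The plan is to apply Theorem \ref{theoreme kevin} directly to the indicator function $g := \mathds{1}_\mathcal{A}$, making the trivial choices $\omega \equiv 0$ and $\rho \equiv 0$. With these choices the main term on the right-hand side of Theorem \ref{theoreme kevin} vanishes identically, so the whole left-hand side is controlled purely by the error terms. Since the left-hand side is exactly the normalised cardinality we wish to bound, the task reduces to checking that each error term is of size $O(\#\mathcal{A}/B^{3d})$.

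The estimation of the errors is then routine. First, $\|g\|_1 = \#(\mathcal{A} \cap \eZ^3 \cap [1, bB^d]^3) \leqslant \# \mathcal{A}$. Secondly, since $\rho \equiv 0$ and $\omega \equiv 0$, the discrepancy defined in (\ref{E}) reduces to a supremum of partial counts of $\mathcal{A}$ in arithmetic progressions modulo $W_z$, which is again trivially bounded by $\#\mathcal{A}$. Fixing any constant $z \geqslant 2$ (for instance $z = 2$) turns $W_z$ into an absolute constant, so Theorem \ref{theoreme kevin} yields
\[ \frac{\#\{\bm k \in \eZ^{n+1} \cap B\mathcal{K} : \min_j \epsilon_j F_j(\bm k) > 0,\; (\epsilon_j F_j(\bm k))_{j} \in \mathcal{A}\}}{B^{n+1}} \ll \frac{\#\mathcal{A}}{B^{3d}}\bigl(B^{-\delta} + \widetilde{\varepsilon}(z) + z^{-c} + W_z^3\bigr) \ll \frac{\#\mathcal{A}}{B^{3d}}, \]
with implicit constants depending on $\bm F$ and $z$ but independent of $\mathcal{A}$ and $B$, which is the desired bound.

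I do not anticipate any genuine obstacle, since the claim is essentially a tautological corollary of the circle-method input of Destagnol--Lyczak--Sofos. The only subtlety worth highlighting is that $z$ must be chosen as a fixed constant (rather than growing with $B$ as it does in the applications to proving an asymptotic), so that the prefactor $W_z^3$ appearing next to $E(bB^d, W_z)$ in the error term gets absorbed into the implicit constant rather than spoiling it.
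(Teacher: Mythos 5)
Your proof is correct and takes essentially the same route as the paper, which states the lemma as a corollary of Theorem \ref{theoreme kevin} applied to the indicator function of $\mathcal{A}$ without spelling out the details: applying the theorem with $g = \mathds{1}_{\mathcal{A}}$, $\omega \equiv 0$, $\rho \equiv 0$, and a fixed value of $z$ kills the main term, reduces $E(bB^d, W_z)$ to a trivial bound by $\#\mathcal{A}$, and absorbs $W_z^3$, $\widetilde{\varepsilon}(z)$, and $z^{-c}$ into the implicit constant. Your observation that $z$ must be held fixed (unlike in the asymptotic application) is exactly the right point to make explicit.
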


\begin{rmk}\label{partition boîte}
    We cannot take $\mathcal{K} = [-1,1]^{n+1}$ in theorem \ref{theoreme kevin}. We divide $\mathcal{K}$ into $2^{n+1}$ boxes $\mathcal{K}_i$ of the same volume, that is 
    \[ [-1,1]^{n+1} = \bigsqcup_{i = 1}^{2^{n+1}} \mathcal{K}_i \]where $\mathrm{vol}(\mathcal{K}_i) = \mathrm{vol}(\mathcal{K}_j) = 1$ whenever $i \neq j$. It is clear that $\mathcal{K}_i$ satisfies $($\ref{boite}$)$ so that we apply theorem~\ref{theoreme kevin} for $B\mathcal{K}_i$. 
\end{rmk}

Using theorem \ref{theoreme kevin} and Lemma \ref{lemme utile crible}, we now focus on estimating the quantity 
\[ N_{\mathrm{loc}} (\pi_{\bm F},B^{n+1})  = \# \left\{ \bm x \in \p^n(\eQ) : \max |x_i| \leqslant B, \; \vartheta_\eQ (F_0({\bm x}),  F_1({\bm x}), F_2({\bm x})) = 1\right\}. \]To this end, for $\boldsymbol{\epsilon} \in \{-1,1\}^3$ we let 
\begin{equation*}\label{N'}\tag{6.4} N_{\boldsymbol{\epsilon}}'(\pi_{\bm F},B)  :=  \# \left\{ {\bm k} \in \eZ^{n+1} \! : \begin{tabular}{c} $\max \abs{k_i} \leqslant B$,\\ $\min_j \epsilon_j F_j(\bm k) > 0$, \\ $\vartheta_\eQ (\epsilon_0 F_0({\bm k}), \epsilon_1 F_1({\bm k}), \epsilon_2 F_2({\bm k}))=1$ \end{tabular} \right\}.\end{equation*}

The point is that, provided a precise estimate of each $N_{\boldsymbol{\epsilon}}'(\pi_{\bm F},B)$, a Möbius inversion yields 
\begin{align*} N_{\mathrm{loc}} (\pi_{\bm F},B^{n+1}) & = {1 \over 2} \sum_{\boldsymbol{\epsilon} \in \{-1,1\}^3} \sum_{\substack{\bm k \in \eZ^{n+1} \\
\max_i |k_i| \leqslant B \\ \min_j \epsilon_j F_j(\bm k) > 0 }} \sum_{\ell \mid \gcd(k_0, \dots ,k_n)} \!\!\!\!\! \mu(\ell) \vartheta_\eQ (\epsilon_0 F_0({\bm k}), \epsilon_1 F_1({\bm k}), \epsilon_2 F_2({\bm k})) \\
& = {1 \over 2} \sum_{\boldsymbol{\epsilon} \in \{-1,1\}^3}  \sum_{\ell \leqslant B^{n+1} }  \mu(\ell) N_{\boldsymbol{\epsilon}}'(\pi_{\bm F},B/\ell), \end{align*}where we used the fact that $F_0,F_1,F_2$ are homogeneous of the same degree $d$. Now, the trivial bound 
\[ N_{\boldsymbol{\epsilon}}'(\pi_{\bm F},B/\ell) \leqslant \left({B \over \ell} \right)^{n+1} \]yields
\[ \left| \sum_{\ell > \log(B) }  \mu(\ell) N_{\boldsymbol{\epsilon}}'(\pi_{\bm F},B/\ell) \right| \leqslant  B^{n + 1}  \sum_{\ell > \log(B) } {1 \over \ell^{n+1}} \leqslant {B^{n + 1}\over (\log B)^n},\]where $n \geqslant 2$ since $(F_0,F_1,F_2)$ is a Birch system. Hence, 
\[ \label{inv mobius} \tag{6.5} N_{\mathrm{loc}} (\pi_{\bm F},B^{n+1}) = {1 \over 2} \sum_{\boldsymbol{\epsilon} \in \{-1,1\}^3}  \sum_{\ell \leqslant \log(B) }  \mu(\ell) N_{\boldsymbol{\epsilon}}'(\pi_{\bm F},B/\ell) + O\left( {B^{n+1} \over (\log B)^2}\right),\]and we are led to estimate each quantity $N_{\bm \epsilon}'(\pi_{\bm F},B) $. For $z > 0$, we introduce the set 
\[ \mathcal{E}_{\bm \epsilon,z}(\pi_{\bm F}) := \left\{ {\bm k} \in \eZ^{n+1} :\; \begin{tabular}{l}
            $\forall p \in (2, z],\; \max (v_p( F_0({\bm k})F_2({\bm k})),v_p(F_1({\bm k})  F_2({\bm k}))  ) < m_p(z)$; \\
        $\max (v_2( F_0({\bm k})F_2({\bm k})),v_2(F_1({\bm k})  F_2({\bm k}))  )  \leqslant m_2(z) -3$; \\
        $\min_j \epsilon_j F_j(\bm k) > 0, \; \vartheta_\eQ (\epsilon_0 F_0({\bm k}), \epsilon_1 F_1({\bm k}), \epsilon_2 F_2({\bm k}))=1;$\\ $\forall p > z, \; p \nmid \gcd(F_0(\bm k), F_1(\bm k), F_2(\bm k))$
    \end{tabular}   \right\},  \]and we start by showing that $N_{\bm \epsilon}'(\pi_{\bm F},B)$ is well approximated by the quantity 
\begin{equation*}\label{N tilde} \tag{6.6}\widetilde{N}_{\bm \epsilon, z}(\pi_{\bm F},B) := \# \left(\mathcal{E}_{\bm \epsilon, z}(\pi_{\bm F}) \cap [-B,B]^{n+1}\right)\end{equation*} if $z$ is large enough and $m_p(z) $ is defined in \ref{def W et eps}. For such $z$, following Remark \ref{partition boîte}, we introduce

 \begin{equation*}\label{N tilde K} \tag{6.7} \widetilde{N}_{\bm \epsilon, z}(\pi_{\bm F},B, \mathcal{K})  := \# \left(\mathcal{E}_{\bm \epsilon, z}(\pi_{\bm F}) \cap B\mathcal{K}\right),\end{equation*} where $\mathcal{K}$ satisfies (\ref{boite}). In order to bound the difference between $ \widetilde{N}_{\bm \epsilon, z}(\pi_{\bm F},B, \mathcal{K})$ and $N_{\bm \epsilon}'(\pi_{\bm F},B) $, we use the large sieve as stated in \cite[lemma 5.1]{Wilson2}.  

\begin{lemme}\label{grand crible}
For any subset $Y_p \subseteq (\eZ/ p^2 \eZ)^3$ and for all $x_0,x_1,x_2 \geqslant 1$, $L \leqslant (\min x_i)^{1/4}$, we have
\[ \# \{ \bm s \in \eZ^3 : \abs{s_i} \leqslant x_i,  \forall p \leqslant L, \;  \bm s \; (\bmod \; p^2) \notin Y_p\} \ll {x_0x_1x_2 \over F( L )}, \]where

\[ F(L) = \sum_{a \leqslant L} \mu^2(a) \prod_{p \mid a} { \# Y_p \over p^6 - \# Y_p}.\]
\end{lemme}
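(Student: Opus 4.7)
The statement is a multidimensional arithmetic large sieve at prime square moduli, in the Wilson formulation. I would deduce it from the analytic large sieve inequality on $(\eZ/q^2\eZ)^3$ combined with the classical Montgomery-Vaughan dualisation.

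The plan is to establish first the analytic precursor: for any sequence of complex numbers $(a_{\bm s})_{\bm s \in \eZ^3}$ supported inside the box $\prod_{i = 0}^2 [-x_i, x_i]$, and for $\mathcal{X}_q^\ast$ denoting the set of primitive Dirichlet characters on $(\eZ/q^2\eZ)^3$, a tensor product of the one-dimensional Montgomery--Vaughan large sieve gives
\[
\sum_{q \leqslant L} \mu^2(q) \sum_{\bm\chi \in \mathcal{X}_q^\ast} \biggl| \sum_{\bm s} a_{\bm s} \,\bm\chi(\bm s) \biggr|^2 \ll \prod_{i = 0}^2 (x_i + L^4) \sum_{\bm s} |a_{\bm s}|^2,
\]
the factor $L^4$ appearing because the largest modulus being sieved is $q^2 \leqslant L^2$, whose square is $L^4$. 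Under the hypothesis $L \leqslant (\min x_i)^{1/4}$ we have $L^4 \leqslant \min x_i$, so the right-hand side is $\ll x_0x_1x_2 \sum |a_{\bm s}|^2$.

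The second step turns this into an arithmetic large sieve. Let $S$ denote the set whose cardinality we want to bound and take $a_{\bm s}$ to be its indicator function. For each squarefree $q \leqslant L$ and each residue class $\bm a \in (\eZ/q^2\eZ)^3$ that is forbidden modulo $p^2$ for some $p \mid q$, one has $\sum_{\bm s \in S,\, \bm s \equiv \bm a} 1 = 0$ by the defining assumption on $S$. Expanding the characteristic function of the admissible classes modulo $q^2$ as a sum over the primitive characters of conductor dividing $q^2$, and applying Cauchy--Schwarz together with the orthogonality identity that relates a weighted counting function of the forbidden residues to $\prod_{p \mid q} |Y_p|/(p^6 - |Y_p|)$, one arrives (in the standard Selberg--Gallagher manner) at
\[
|S|^2 \,F(L) \;\leqslant\; \prod_{i = 0}^2 (x_i + L^4)\cdot |S|,
\]
which on dividing by $|S|$ and using $L^4 \leqslant \min x_i$ yields the claim.

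The only genuinely delicate point is the arithmetic reformulation at the end, i.e.\ checking that the weight $\prod_{p \mid q} |Y_p|/(p^6-|Y_p|)$ is really what comes out of the character expansion of the indicator function of the forbidden classes modulo $q^2 = \prod_{p \mid q} p^2$. This is done prime by prime using the Chinese remainder theorem, which factors characters on $(\eZ/q^2\eZ)^3$ into tensor products of characters on each $(\eZ/p^2\eZ)^3$ for $p \mid q$; the local contribution of each prime is then a finite computation comparing the number of triples in $Y_p$ to the complementary count $p^6 - |Y_p|$. Everything else is a direct quotation of the multidimensional Montgomery--Vaughan inequality and of the hypothesis $L \leqslant (\min x_i)^{1/4}$, which is tailored precisely so that the analytic error term $L^4$ is absorbed into the main term $\prod_i x_i$.
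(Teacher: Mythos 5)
The paper does not prove this lemma itself: it imports it directly from Wilson \cite[lemma 5.1]{wilson2}, using it as an off-the-shelf multidimensional arithmetic large sieve at prime-square moduli. There is therefore no argument in the paper to compare yours against. That said, your sketch is the standard Montgomery--Gallagher dualisation and it is correct in its broad strokes: the relevant moduli $q^2 \leqslant L^2$ force the error term $(L^2)^2 = L^4$ in the analytic bound, the box $\prod_i [-x_i,x_i]$ yields the tensored factor $\prod_i(x_i+L^4)$, and the hypothesis $L \leqslant (\min x_i)^{1/4}$ is exactly what absorbs $L^4$ into each $x_i$. The $p^6$ in the definition of $F(L)$ reflects that $\#(\eZ/p^2\eZ)^3 = p^6$, and the weight $\prod_{p\mid q}\#Y_p/(p^6-\#Y_p)$ is what the prime-by-prime Cauchy--Schwarz (Gallagher's inequality) produces, exactly as you indicate. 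The one place to tighten for a full write-up is the ``arithmetic reformulation'' step: you should state Gallagher's inequality explicitly in this setting, namely that for each squarefree $q$ whose prime factors are $\leqslant L$, one has
\begin{equation*}
\sideset{}{^*}\sum_{\bm\chi \bmod q^2} \Bigl|\sum_{\bm s \in S} \bm\chi(\bm s)\Bigr|^2 \;\geqslant\; |S|^2 \prod_{p\mid q} \frac{\#Y_p}{p^6 - \#Y_p},
\end{equation*}
which follows by CRT factorisation and a single-prime Cauchy--Schwarz estimate over the $p^6-\#Y_p$ admissible residues, rather than leaving it as ``in the standard Selberg--Gallagher manner.'' With that inequality recorded, summing over $q\leqslant L$ and comparing with the analytic large sieve gives $F(L)\,|S|^2 \leqslant \prod_i(x_i+L^4)\,|S|$, and the conclusion follows as you say.
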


 \begin{lemme}\label{applic crible}
     Let $x_0,x_1,x_2$, $c_0,c_1,c_2$ be natural numbers satisfying $c_{012} \leqslant (\min x_i)^{1/6}$. We have
     \[ \# \left\{ \bm t \in \eZ^3 : \abs{t_i} \leqslant x_i,  c_i \mid t_i, (t_0t_2,t_1t_2)_\eQ = 1  \right\} \ll {1\over c_{012}  }{x_{012} \over (\log \min x_i)^{3/2}},\]where the implicit constant is absolute.
 \end{lemme}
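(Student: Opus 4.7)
The plan is to apply the large sieve of Lemma \ref{grand crible}, preceded by the substitution $t_i = c_i s_i$ which transforms the count into one over $\bm s \in \eZ^3$ with $|s_i| \leq x_i/c_i$ subject to $(c_0 c_2 s_0 s_2,\, c_1 c_2 s_1 s_2)_\eQ = 1$. By Hasse--Minkowski \cite[ch. 3, th. 8]{ser2}, this global condition forces the local Hilbert symbol at every finite prime to equal $1$, so each admissible $\bm s$ avoids, modulo $p^2$, the set
\[Y_p := \{ \bm s \in (\eZ/p^2 \eZ)^3 : (c_0 c_2 s_0 s_2,\, c_1 c_2 s_1 s_2)_p = -1\}\]
for every odd prime $p$ coprime to $c_{012}$; at the (finitely many) remaining primes I set $Y_p := \emptyset$, which costs nothing.

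The key technical step will be to establish the density estimate
\[\frac{\#Y_p}{p^6} = \frac{3}{2p} + O\!\left(\frac{1}{p^2}\right) \qquad (p \text{ odd}, \; p \nmid c_{012}),\]
with an absolute implied constant. At an odd prime the Hilbert symbol depends only on $\bm s \pmod{p^2}$, via the valuations $v_p(s_i)$ and the unit parts modulo $p$; see \cite[ch. 3]{ser2}. A direct case analysis on the triple $(v_p(s_0), v_p(s_1), v_p(s_2))$ shows that the symbol is trivially $+1$ when all three valuations vanish, and equals $-1$ with probability exactly $1/2$ in each of the three cases where precisely one valuation equals $1$, these three cases each occurring with probability $(1-1/p)^2/p$. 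Summing yields the $3/(2p)$ main term, which is exactly the local factor already encountered in $\beta_q(\bm b, \bm m)$ in (\ref{expr beta}); the contribution of configurations with two or more positive valuations is absorbed into $O(1/p^2)$. I expect this density computation to be the only real obstacle; the rest is standard bookkeeping.

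To conclude, I choose $L := \lfloor (\min x_i / c_{012})^{1/5} \rfloor$, so that the hypothesis $L \leq (\min x_i/c_i)^{1/4}$ of Lemma \ref{grand crible} is satisfied (using $c_{012} \leq (\min x_i)^{1/6}$) and $\log L \asymp \log \min x_i$. The density bound gives $\#Y_p/(p^6 - \#Y_p) \geq \tfrac{3}{2p} - O(p^{-2})$ for the relevant primes, and a standard Mertens-type argument yields
\[F(L) \;\gg\; \prod_{\substack{p \leq L \\ p \nmid 2 c_{012}}} \!\!\Bigl(1 + \frac{3}{2p}\Bigr) \;\gg\; (\log L)^{3/2} \;\gg\; (\log \min x_i)^{3/2},\]
the removal of primes dividing $2 c_{012}$ costing only a bounded factor since $\omega(c_{012}) \ll \log_2 \min x_i$. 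Lemma \ref{grand crible} applied to the variables $\bm s$ with these $Y_p$ then produces a bound of order $(x_0/c_0)(x_1/c_1)(x_2/c_2)/F(L)$ on the number of admissible $\bm s$, which is $\ll x_{012}/\bigl(c_{012} (\log \min x_i)^{3/2}\bigr)$, as claimed.
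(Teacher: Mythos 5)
Your proposal follows the same route as the paper: the substitution $t_i = c_i s_i$, the definition of $Y_p$ via the local Hilbert condition, the density estimate $\#Y_p/p^6 \sim 3/(2p)$, and the application of the large sieve from Lemma \ref{grand crible} with a suitable $L$. Your definition of $Y_p$ (using $(c_0c_2 s_0 s_2, c_1c_2 s_1 s_2)_p$ rather than $(s_0s_2,s_1s_2)_p$) is in fact the strictly correct one, though the density lower bound is the same in either case and this is what matters.

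Two points where your argument is less careful than the paper's, both around the lower bound on $F(L)$. First, $F(L) = \sum_{a\leqslant L}\mu^2(a)\prod_{p\mid a}\#Y_p/(p^6-\#Y_p)$ is a truncated sum, not the Euler product $\prod_{p\leqslant L}(1+3/(2p))$, so the step you call ``a standard Mertens-type argument'' is really a Selberg--Delange/Wirsing-type estimate for a multiplicative weight with mean value $3/2$ on primes; the paper carries this out explicitly following \cite[exercise 202--203]{Tenenbaumexo}, landing on $F(L)\gg(\varphi(2c_{012})/(2c_{012}))^{3/2}(\log L)^{3/2}$. Second, your claim that removing the primes $p\mid 2c_{012}$ costs only a bounded factor, ``since $\omega(c_{012})\ll\log_2\min x_i$,'' is not a valid deduction: the loss is precisely $(\varphi(2c_{012})/(2c_{012}))^{3/2}$, which can be as small as $(\log_2 c_{012})^{-3/2}$ and is not $O(1)$ in general. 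This is, admittedly, a tension already latent in the paper's own statement (which asserts an absolute implicit constant while the proof retains the $\varphi/c_{012}$ factor); but the paper's proof at least makes the dependence explicit, whereas your phrasing incorrectly asserts boundedness. Since in the downstream application $c_{012}$ is a prime power and $\varphi(2c_{012})/(2c_{012})\asymp 1$, neither version breaks anything, but you should state the $F(L)$ lower bound with the $\varphi(2c_{012})/(2c_{012})$ factor and not claim it is $O(1)$.
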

 \begin{proof}
      Let 
     \[ A(\bm c, \bm x) := \# \left\{ \bm t \in \eZ^3 : \abs{t_i} \leqslant x_i,  c_i \mid t_i, (t_0t_2,t_1t_2)_\eQ = 1  \right\}.  \]We can write for each $i \in \{0,1,2\}$, $t_i = c_i s_i$ with integers $s_0,s_1,s_2$. Thus, 
     \[ A(\bm c, \bm x) \ll \# \left\{ \bm s \in \eZ^3 : \abs{s_i} \leqslant {x_i \over c_i},  (c_0c_2s_0s_2,c_1c_2s_1s_2)_\eQ = 1  \right\}.\]For $p \nmid 2c_{012}$, let $Y_p \subseteq (\eZ / p^2 \eZ)^3$ be the classes of $\bm s$ modulo $p^2$ such that $(s_0s_2,s_1s_2)_p = -1$. If $v_p(s_{012}) = 0$ then $(s_0s_2,s_1s_2)_p = 1$ so there is no contribution to $Y_p$. We now discuss the case $v_p(s_0s_1s_2) = 1$. If $v_p(s_0s_1) = 1$ and $v_p(s_2)= 0$, we have 
    \[ (s_0s_2,s_1s_2)_p \in \left\{ \left( s_0s_2\over p \right) , \left( s_1s_2\over p \right) \right\} \]so this case contributes $2p^3 {(p - 1)^2 \over 2}$~to~$\# Y_p$. If $v_p(s_2) = 1$ and $v_p(s_0s_1) = 0$, we have 
    \[ (s_0s_2,s_1s_2)_p =  (-1)^{{p-1 \over 2}} \left( s_0s_2/p\over p \right) \left( s_1s_2/p\over p \right).   \]There are respectively $p$, $\varphi(p^2)$, and $p {p - 1 \over 2}$ choices for $s_2,s_1$ and $s_0$ so this case contributes $p^3 {(p-1)^2 \over 2}$ to $\# Y_p$. It remains to discuss the case $v_p(s_0s_1s_2) \geqslant 2$. We have either $v_p(s_i) \geqslant 2$, for some $i \in \{0,1,2\}$ or $\min(v_p(s_i), v_p(s_j))\geqslant 1$, for $i \neq j$ in $\{0,1,2\}$. Each of these cases contributes at most $p^4$ to $ \# Y_p$, so we deduce that 
    \[ \# Y_p \geqslant {3 \over 2} p^3(p-1)^2  +O(p^4) \geqslant  {3 \over 2}p^5 -3p^4   .\]Hence, the function $F$ in lemma \ref{grand crible} satisfies
    \[ F(L) \gg \sum_{\substack{a \leqslant L \\ \gcd(2c_{012},a) = 1}} \mu^2(a) \prod_{p \mid a} {{3 \over 2} \left(p -2 \right) \over p^2 - {3 \over 2} \left(p -2 \right) } = \sum_{\substack{a \leqslant L \\ \gcd(2c_{012},a) = 1}} {\mu^2(a)u(a) \over a} \left( {3 \over 2}\right)^{\omega(a)},\]with $u$ the multiplicative function defined by
    \[ u(a) = \prod_{p \mid a} \left( {p^2-2p \over p^2 - {3 p\over 2} + 3}  \right) = \prod_{p \mid a} \left( 1-  {{ p \over 2} + 3 \over p^2 - {3 p\over 2} +3}  \right). \]To estimate the right-hand side, we write uniquely each integer $k$ as $k = ab^2$ with $a$ square-free, which yields
    \[ \sum_{\substack{k \leqslant L \\ \gcd(k, 2c_{012} )}} {1 \over k} \left({3 \over 2} \right)^{\omega(k)}\!\!\!\!u(k) = \sum_{\substack{ab^2 \leqslant L \\ \gcd(ab^2, 2c_{012} ) = 1} } {\mu^2(a) \over ab^2} \left( {3 \over 2} \right)^{\omega(ab^2)}\!\!\!\!\! u(ab^2) \ll\sum_{\substack{a \leqslant L \\ \gcd(2c_{012},a) = 1}} {\mu^2(a)\over a} \left( {3 \over 2}\right)^{\omega(a)}\!\!\!\!\! u(a).  \]Then, we apply the Selberg--Delange method to estimate the sum 
    \[  \sum_{\substack{k \leqslant L \\ \gcd(k, 2c_{012} )}}  \left({3 \over 2} \right)^{\omega(k)}\!\!\!\!\! u(k).\]Let $G(s)$ denote the product 
    \[ G(s) := \prod_{p} \left( 1 - {1 \over p^s} \right)^{3/2} \left( \sum_{\nu \geqslant 0} \left({3 \over 2}\right)^\nu { u(p^\nu) \over p^{\nu s}} \right) \]which is convergent for $\sigma > 1/2$, since $u(p) = 1 + O(1/p)$. Similarly to \cite[exercise 203, p181]{TW}, it follows that
    
    \[ \sum_{\substack{k \leqslant L \\ \gcd(k, 2c_{012} )}}  \left({3 \over 2} \right)^{\omega(k)}\!\!\!\!\! u(k) = L (\log L)^{1/2} \left( {G(1) \over \Gamma(3/2)}\prod_{p\mid 2c_{012}} \left( \sum_{\nu \geqslant 0} \left( {3 \over 2}\right)^\nu {u(p^\nu) \over p^\nu} \right)^{-1} + O\left({1 \over \log L} \right)\right).\]Again, since $u(p)= 1 + O(1/p)$, we have 
    \[ \prod_{p\mid 2c_{012}} \left( \sum_{\nu \geqslant 0} \left( {3 \over 2}\right)^\nu {u(p^\nu) \over p^\nu} \right)^{-1} \gg \prod_{p \mid 2c_{012} } \left( 1 + {3 \over 2p} \right)^{-1} \geqslant \left({\varphi(2c_{012} ) \over 2c_{012}}\right)^{3/2}.  \]Hence
    
    \[ \sum_{\substack{k \leqslant L \\ \gcd(k, 2c_{012} )}}  \left({3 \over 2} \right)^{\omega(k)}\!\!\!\!\! u(k)  \gg \left({\varphi(2c_{012} ) \over 2c_{012}}\right)^{3/2}L (\log L)^{1/2}.\]In particular, it follows from a partial summation that 
    \[ F(L) \gg \left({\varphi(2c_{012} ) \over 2c_{012}}\right)^{3/2} \int_{2}^L {\log( v)^{1/2} \over v} \mathrm{d}v \gg \left({\varphi(2c_{012} ) \over 2c_{012}} \log L\right)^{3/2}. \]We can therefore conclude with lemma \ref{grand crible} and $L = (\min x_i)^{5/24 } \leqslant \min (x_i/c_i)^{1/4}$.

 \end{proof}

 We remove here the large valuations and impose the coprimality condition to recover the assumptions from Theorem \ref{problème de comptage}.

\begin{cor}\label{lien N' et N tilde}
    Let $z > 2 $ be a real number. The quantity $\varepsilon(z)$ is as in Definition \ref{def W et eps}, and goes to $0$ when the parameter $z$ goes to $+\infty$. If $z \ll \log_2 B$, we have the estimate
    \begin{align*} N_{\bm \epsilon}'(\pi_{\bm F},B) & = \sum_{i=1}^{2^{n+1}} \widetilde{N}_{\bm \epsilon, z}(\pi_{\bm F},B,\mathcal{K}_i) + O\left( \left(\varepsilon(z) + {1 \over z \log z}\right) {B^{n+1} \over  (\log B)^{3/2} } + {B^{n+1/2} \over (\log B)^{1/2}}  \right),
\end{align*}where \[ [-1,1]^{n+1} = \bigsqcup_{i = 1}^{2^{n+1}} \mathcal{K}_i\] is the partition defined in Remark \ref{partition boîte}.
\end{cor}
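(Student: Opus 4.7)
Since $[-B,B]^{n+1}=\bigsqcup_{i=1}^{2^{n+1}} B\mathcal{K}_i$ by Remark \ref{partition boîte}, the sum in the statement equals $\widetilde{N}_{\bm\epsilon, z}(\pi_{\bm F},B):=\#\bigl(\mathcal{E}_{\bm\epsilon,z}(\pi_{\bm F})\cap[-B,B]^{n+1}\bigr)$. Thus the difference $N_{\bm\epsilon}'(\pi_{\bm F},B)-\widetilde{N}_{\bm\epsilon,z}(\pi_{\bm F},B)$ counts the $\bm k\in\eZ^{n+1}\cap[-B,B]^{n+1}$ satisfying the global conditions $\min_j\epsilon_jF_j(\bm k)>0$ and $\vartheta_\eQ(\epsilon_0F_0(\bm k),\epsilon_1F_1(\bm k),\epsilon_2F_2(\bm k))=1$ while failing at least one of the extra local conditions defining $\mathcal{E}_{\bm\epsilon,z}$. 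A union bound decomposes this into two families of bad points: (I) a valuation failure, i.e. $\max(v_p(F_0F_2),v_p(F_1F_2))\geqslant m_p(z)$ (with the shifted bound at $p=2$) for some prime $p\leqslant z$; and (II) a coprimality failure, i.e. $\gcd(F_0(\bm k),F_1(\bm k),F_2(\bm k))>1$. Each piece will be controlled by transporting a bound on $\bm t$-space, given by Lemma \ref{applic crible}, back to $\bm k$-space via Lemma \ref{lemme utile crible}.

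For family (I), if $p^{m_p(z)}\mid F_0(\bm k)F_2(\bm k)$ then $p^{\lceil m_p(z)/2\rceil}$ divides at least one of $F_0(\bm k)$ or $F_2(\bm k)$. Apply Lemma \ref{lemme utile crible} to the set $\mathcal{A}_p^{(i)}=\{\bm t\in(\eZ\cap[-bB^d,bB^d])^3:p^{\lceil m_p(z)/2\rceil}\mid t_i,\ (t_0t_2,t_1t_2)_\eQ=1\}$; Lemma \ref{applic crible} with $c_i=p^{\lceil m_p(z)/2\rceil}$ and the other $c_j=1$ gives $\#\mathcal{A}_p^{(i)}\ll B^{3d}/(p^{\lceil m_p(z)/2\rceil}(\log B)^{3/2})$, so the lattice-point contribution is $\ll B^{n+1}/(p^{\lceil m_p(z)/2\rceil}(\log B)^{3/2})$. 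Summing the six pair-and-index choices and over $p\leqslant z$, the total is controlled by $\varepsilon(z)B^{n+1}/(\log B)^{3/2}$ from Definition \ref{def W et eps}. For family (II), fix a prime $p\mid\gcd(F_0(\bm k),F_1(\bm k),F_2(\bm k))$. In the range $z<p\leqslant B^{d/12}$, where Lemma \ref{applic crible} applies, the conditions $p\mid F_0(\bm k)$ and $p\mid F_1(\bm k)$ combined with $c_0=c_1=p$, $c_2=1$ give an individual bound $\ll B^{n+1}/(p^2(\log B)^{3/2})$, and the Mertens estimate $\sum_{p>z}1/p^2\ll 1/(z\log z)$ produces the $1/(z\log z)\cdot B^{n+1}/(\log B)^{3/2}$ term. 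For very large primes $p>B^{d/12}$, the hypothesis of Lemma \ref{applic crible} fails and I would use instead that such $\bm k$ reduce modulo $p$ to the complete intersection $V_0\cap V_1\cap V_2$, which is of codimension three; combining a direct lattice-point estimate on the affine cone of this subvariety with the $(\log B)^{-1/2}$ saving afforded by $\vartheta_\eQ=1$ yields the $B^{n+1/2}/(\log B)^{1/2}$ boundary term.

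The delicate step is family (II) at small primes $p\leqslant z$, since a naive application of Lemma \ref{applic crible} with $c_0=c_1=c_2=p$ only gives $\sum_{p\leqslant z}1/p^3=O(1)$, which does not tend to zero with $z$. The plan is to absorb this range into family (I): whenever $p\leqslant z$ divides all three $F_i(\bm k)$ one has $v_p(F_0F_2)\geqslant 2$ and $v_p(F_1F_2)\geqslant 2$, and the factor $\tau(\gcd(\cdot,q))/\tau(\cdot)$-type weight together with the refined dyadic/power split used in Step 4 of Section \ref{section 4} lets us subsume these points into the valuation bound at $p$ (effectively paying an extra factor absorbed by $\varepsilon(z)$). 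Doing the bookkeeping carefully and taking the union bound then yields the announced estimate.
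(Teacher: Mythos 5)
Your decomposition into valuation failures (I) and coprimality failures (II), and your treatment of family (I) via Lemmas~\ref{applic crible} and~\ref{lemme utile crible}, agree with the paper. But for family~(II) the paper simply invokes \cite[lemma 4.1]{DHS2} with $R=3$ to get the bound $\ll \frac{1}{z\log z}\frac{B^{n+1}}{(\log B)^{3/2}} + \frac{B^{n+1/2}}{(\log B)^{1/2}}$, whereas you attempt to reprove it by splitting into three prime ranges. Your treatment of the intermediate range $z<p\leqslant B^{d/12}$ is sound, but the other two ranges contain genuine gaps.

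The most serious problem is the range $p\leqslant z$. You propose to absorb these into family~(I) on the grounds that $p\mid\gcd(F_0(\bm k),F_1(\bm k),F_2(\bm k))$ forces $v_p(F_0F_2)\geqslant 2$ and $v_p(F_1F_2)\geqslant 2$. But the valuation threshold in $\mathcal{E}_{\bm\epsilon,z}$ is $m_p(z)=\lceil z\rceil + 2\mathds{1}_{p=2}\geqslant 3$, so a valuation of $2$ does \emph{not} trigger a valuation failure, and these points are not captured by family~(I). Your subsequent appeal to ``the $\tau(\gcd(\cdot,q))/\tau(\cdot)$-type weight'' and the ``refined dyadic/power split used in Step~4 of Section~\ref{section 4}'' is a non sequitur: those devices belong to the arithmetic computation over triples $\bm t$ (quadratic reciprocity, character sums), not to a lattice-point count over $\bm k\in\eZ^{n+1}$, and cannot supply the missing cancellation here. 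The range $p>B^{d/12}$ is also left at a sketch: reducing modulo $p$ to the codimension-$3$ complete intersection is the right starting point, but obtaining the term $B^{n+1/2}/(\log B)^{1/2}$ requires a quantitative geometric (Ekedahl-type) sieve over large primes together with an actual argument for why the $(\log B)^{-1/2}$ saving from $\vartheta_\eQ=1$ can be multiplied in; ``a direct lattice-point estimate'' plus ``combining'' does not constitute a proof. As written, family~(II) is not established, and since the paper's proof delegates exactly this step to \cite[lemma 4.1]{DHS2}, the honest route is to cite that lemma (or to supply the Ekedahl sieve argument in full) rather than to claim an absorption into family~(I) that does not hold.
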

\begin{proof}
It suffices to work with $ \mathcal{K} = \mathcal{K}_i$ fixed. Recall that $d$ is the common degree of the polynomials $F_i$ and $b$ denotes the real number~$2 \max_i ( \max \abs{F_i(\mathcal{K})})$. We start by dealing with the large valuations. The condition $\max (v_p(n_0n_2),v_p(n_1n_2))\geqslant  m_p(z)$ implies that $\max(v_p(n_0),v_p(n_1),v_p(n_2)) \geqslant m_p(z)/2$ so we can apply lemma \ref{lemme utile crible} to the set 
    \[ \mathcal{A} := \left\{ \bm t \in \eZ^3 \cap B\mathcal{K}: (t_0t_2,t_1t_2)_\eQ = 1, \; \exists p \leqslant z \text{ such that } \max v_p(t_i) \geqslant m_p(z)/2 \right\}.\]The result follows from Lemma \ref{applic crible} applied for instance with $c_0 = p^{m_p(z)/2}$, $c_1=c_2= 1$ for which we have 
    \[ c_{012} = p^{m_p(z)/2} \leqslant W_z \leqslant \mathrm{e}^{m_p(z) \left(z + {z \over 2 \log z}\right)} \ll B^{1/6},\]since $z \ll \log_2(B)$. Thus,
    \[ {\# \mathcal{A} \over B^{3d}} \ll \sum_{ p \leqslant z} {1 \over p^{m_p(z)/2} } { 1\over (\log B)^{3/2}} \ll { \varepsilon(z)  \over  (\log B)^{3/2}}. \]To deal with the relative coprimality of the $F_i(\bm k)$ with respect to $p > z$, we follow \cite[lemma 4.1]{DLS} when $R =3$ which provides a bound of size 
    \[ \ll {1 \over z \log z} {B^{n+1} \over (\log B)^{3/2}} + {B^{n+1/2 } \over (\log B)^{1/2}}.\]
\end{proof}

We now focus on estimating $ \widetilde{N}_{\bm \epsilon, z}(\pi_{\bm F},B,\mathcal{K}_i)$. Recall that we denote by $Q_{\bm t}$ the conic of equation 
$ t_0 y_0^2 + t_1y_1^2 - t_2y_2^2 = 0 $, with $\bm t \in \eN^3$. Thanks to theorem \ref{theoreme kevin} and Corollary \ref{lien N' et N tilde}, it suffices to find a non-trivial bound for the quantity $E(B,W_z)$ as in (\ref{E}). If there exists $i \in \{0,1,2\}$ such that $B_i < B^2/(\log B)^2$, then we handle $E(B,W_z)$ using the trivial bound. In the remaining cases, we have $\bm B \in \left[ B/ (\log B)^2 , B\right]$, so that we are led to provide an asymptotic formula for the quantity
\[  N({\bm n},W_z,\bm B)\! := \! \# \left\{  {\bm t} \in \eN^3 \! : \begin{tabular}{c} $\forall i \in \{0,1,2\}, \; t_i \leqslant B_i$; \\ $ \gcd(t_0,t_1,t_2) = 1,  \; \bm t \equiv \bm n \Mod{W_z} $ \\ $ Q_{\bm t}$ has a solution in $\eQ\smallsetminus \{(0,0,0)\}$,\end{tabular} \right\},\]where $W_z := \displaystyle \prod_{p \leqslant z} p^{m_p(z)}$ with $m_p(z)$ given by (\ref{m_p}), and $\bm n \in [0,W_z)^3$ is such that $\gcd(n_0,n_1,n_2,W_z) =~1$, $\max (v_2(n_0n_2),v_2(n_1n_2)) \leqslant m_2(z) - 3$ and $\max (v_p(n_0n_2),v_p(n_1n_2)) < m_p(z)$ for all $2  < p \leqslant z$. A satisfying estimate for $N(\bm n, W_z,B)$ is provided by Theorem \ref{problème de comptage}.

\begin{lemme}\label{sieg walf} Let $z > 2$ be a real number and let $W_z$ be as in Definition \ref{def W et eps} with $m_p(z)$ given by $($\ref{m_p}$)$. Let $\bm n \in [0, W_z)^3$ be such that $\gcd(n_0,n_1,n_2,W_z) =1$, $\max v_2(n_i) \leqslant m_2(z) - 3$ and $\max v_p(n_i) < m_p(z)$ for all $2  < p \leqslant z$. Let $B \geqslant 16$, and $\bm B = (B_0,B_1,B_2) \in \left[ {B \over (\log B)^2}, B \right]^3$. If~$z \leqslant \sqrt{ {1 \over 14} \log_2 B}$, then, when $B$ goes to $+\infty$, we have 
\[ N({\bm n} ,W_z,\bm B) = \rho({\bm n} , W_z) \prod_{0 \leqslant i \leqslant 2} {B_i \over (\log B_i)^{1/2}} + O \left( {(\log_3B)^{3/2} B_{012} \over W_z^3 \log_2 B (\log B)^{3/2}}\right), \]where
\[ \rho({\bm n} , W_z) : = {2 \over \pi^{3 /2}} {1 \over W_z^3}  \prod_{p \leqslant z} \left( 1 - {1 \over p} \right)^{-3/2}\vartheta_p(\bm n).
 \label{rho} \tag{6.8}  \]
\end{lemme}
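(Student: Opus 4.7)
The plan is to apply Theorem \ref{problème de comptage} directly with $q = W_z$ and then reshape its main term into $\rho(\bm n, W_z) B^3/(\log B)^{3/2}$ at the cost of an acceptable error.

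First I would verify the three hypotheses of Theorem \ref{problème de comptage}. By (\ref{m_p}) one has $v_2(W_z) = m_2(z) \geq 3$ and $v_p(W_z) = m_p(z) \geq 2$ for every odd $p \leq z$, so $W_z$ is squarefull and divisible by $8$. Chebyshev's bound gives $\log W_z \leq (\lceil z \rceil + 2) \sum_{p \leq z} \log p \ll z^2$, hence the hypothesis $z^2 \leq (\log_2 B)/14$ forces $W_z \leq (\log B)^{1/7}$ for $B$ large. The valuation conditions imposed on $\bm n$ in Lemma \ref{sieg walf} are precisely those required by Theorem \ref{problème de comptage}, after translating $m_p(z) = v_p(W_z)$.

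Next I would reshape the main term. Using $\varphi(W_z) = W_z \prod_{p \leq z}(1-1/p)$ together with $\delta_p(\bm n) = \vartheta_p(\bm n)$ for every $p \mid W_z$, a short algebraic manipulation yields
\[
\frac{2}{\pi^{3/2}\varphi(W_z)^3}\prod_p (1-1/p)^{3/2}\delta_p(\bm n) = \rho(\bm n,W_z)\, T(z),
\]
where $T(z) := \prod_{p > z}(1-1/p)^{3/2}\delta_p(\bm n)$. A Taylor expansion in $1/p$ shows that $(1-1/p)^{3/2}\delta_p(\bm n) = 1 + O(1/p^2)$ for every $p > z$ (the $1/p$ terms cancel between the two factors), and a PNT-based estimate on $\sum_{p > z} 1/p^2$ then gives $T(z) = 1 + O(1/(z\log z))$ in the relevant range of $z$.

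To conclude, I would combine the two sources of error: the intrinsic error from Theorem \ref{problème de comptage}, namely $O\!\left(B^3 \log_2(B)\, W_z^3/(\varphi(W_z)^3 (\log B)^{5/2})\right)$, together with the tail error $O\!\left(\rho(\bm n, W_z) B^3/((\log B)^{3/2} z\log z)\right)$ coming from replacing $T(z)$ by $1$. Using Mertens' bound $W_z/\varphi(W_z) \ll \log z$, the crude upper bound $\rho(\bm n, W_z) \ll (\log z)^{3/2}/W_z^3$ and $\log z \ll \log_3 B$ (which follows from $z \leq \sqrt{(\log_2 B)/14}$), both contributions should reduce to the stated error $O\!\left((\log_3 B)^{3/2} B^3/(W_z^3 \log_2(B) (\log B)^{3/2})\right)$. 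The main obstacle is exactly this final bookkeeping: it is tight near the top of the admissible range for $z$ and makes essential use of the hypothesis $z \leq \sqrt{(\log_2 B)/14}$ to simultaneously control $W_z$, $\log z$ and $z\log z$.
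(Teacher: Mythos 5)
Your plan is exactly the paper's proof: apply Theorem~\ref{problème de comptage} with $q = W_z$, check its hypotheses (your bound $\log W_z \ll z^2$ combined with $z^2 \leq \tfrac{1}{14}\log_2 B$ gives $W_z \leq (\log B)^{1/7}$, and $W_z$ is squarefull with $v_2(W_z) \geqslant 3$ by the choice of $m_p(z)$), peel off $\rho(\bm n, W_z)$ using $\varphi(W_z)^3 = W_z^3\prod_{p\leqslant z}(1-1/p)^3$ and $\delta_p(\bm n)=\vartheta_p(\bm n)$ for $p\mid W_z$, and absorb the tail $T(z):=\prod_{p>z}(1-1/p)^{3/2}\delta_p(\bm n)$ into the error. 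Your expansion $T(z)=1+O(1/(z\log z))$ is in fact slightly sharper than the paper's $1+O(1/z)$, though this is inconsequential.

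The step you flag as ``tight,'' however, does not close, and this is also a flaw in the paper's own verification. After dividing out the common factor $B^3/(W_z^3(\log B)^{3/2})$, absorbing the tail term requires
\[
\frac{(\log z)^{1/2}}{z} \ll \frac{(\log_3 B)^{3/2}}{\log_2 B}
\quad\text{(or, with the paper's weaker }T(z)\text{, }\ \frac{(\log z)^{3/2}}{z} \ll \frac{(\log_3 B)^{3/2}}{\log_2 B}\text{),}
\]
hence in particular $z \gg \log_2 B / (\log_3 B)^{O(1)}$. But the hypothesis only gives $z \leqslant \sqrt{\tfrac{1}{14}\log_2 B}$, so $z \ll (\log_2 B)^{1/2}$, which is far too small. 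Even at the top of the admissible range $z \asymp (\log_2 B)^{1/2}$, the tail error is $\asymp (\log_3 B)^{3/2}B^3/(W_z^3(\log_2 B)^{1/2}(\log B)^{3/2})$, a factor $(\log_2 B)^{1/2}$ larger than stated. The lemma as written therefore claims too strong an error term; the correct one has $(\log_2 B)^{1/2}$ (equivalently, $z$) in place of $\log_2 B$. This discrepancy is harmless for Section~\ref{section 6}, where after dividing by the main term one only needs the relative error to tend to zero, which it still does.
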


\begin{proof}
     The assumption $z \leqslant \sqrt{ {1 \over 14} \log_2 B}$ ensures that $W_z\leqslant(\log B)^{1 / 7}$ so that we can apply Theorem~\ref{problème de comptage}, leading to
    \begin{align*}
     N({\bm n},W_z,\bm B) = &  {2 \over \pi^{3/2}\varphi(W_z)^3}\left( \prod_{p \leqslant z}\left( 1 - {1 \over p}\right)^{3/2} \vartheta_p(\bm n) \right)\\ & \times \left( \prod_{p > z}\left(1 - {1 \over p} \right)^{3/2}  {\left(1 + {1 \over p} + {1 \over p^2}\right)\left(1 + {1 \over 2p} + {1 \over p^2}  \right) \over \left(1 - {1 \over p^2}\right)^2} \right) \prod_{0\leqslant i \leqslant 2} {B_i \over (\log B_i)^{1/2}} \\
     & + O \left( {B_{012} \log_2(B) W_z^3\over \varphi(W_z)^3 (\log B)^{5/2}} \right).
     \end{align*}Now, we use the estimate
     \begin{align*} \prod_{p > z}\left(1 - {1 \over p} \right)^{3/2}  {\left(1 + {1 \over p} + {1 \over p^2}\right)\left(1 + {1 \over 2p} + {1 \over p^2}  \right) \over \left(1 - {1 \over p^2}\right)^2} & = \prod_{p > z}\left(1 + O\left( {1 \over p^2} \right)\right)= 1 + O\left( {1 \over z} \right) \end{align*}in order to write
     \begin{align*}
     N({\bm n},W_z,\bm B) = & \;  \rho(\bm n, W_z) \prod_{0\leqslant i \leqslant 2} {B_i \over (\log B_i)^{1/2}}   + O \left( {B_{012} \log_2(B) W_z^3 \over \varphi(W_z)^3 (\log B)^{5/2}} \right) \\ & + O\left( {1 \over z \varphi(W_z)^3} {B_{012} \over (\log B)^{3/2}} \prod_{p \leqslant z} \left( 1 - {1 \over p}\right)^{3/2} \right) .
     \end{align*}
     The last error term is admissible since 
     \[ \prod_{p \leqslant z} \left( 1 - {1 \over p}\right)^{3/2} = {\varphi(W_z)^{3/2} \over W_z^{3/2} } \quad \text{ and } \quad  {1 \over \varphi(W_z)} \ll {\log z \over W_z}, \]which leads us to the estimate
     \[ {1 \over z \varphi(W_z)^3} \prod_{p \leqslant z} \left( 1 - {1 \over p}\right)^{3/2} \prod_{0\leqslant i \leqslant 2} {B_i \over (\log B_i)^{1/2}} \ll {(\log z)^{3/2} \over z W_z^3}  {B_{012}  \over (\log B)^{3/2}} \ll  {(\log_3B)^{3/2} B_{012} \over W_z^3 \log_2 B (\log B)^{3/2}}. \]
\end{proof}

Let $g$ be the arithmetic function defined by 
 \[ \tag{6.9}\label{g} g(\bm t) := \mathds{1}\left( \begin{tabular}{l}
           $\forall p \in (2, z], \; \max (v_p(t_0t_2),v_p(t_1t_2)) < m_p(z)$ ; \\
        $\max (v_2(t_0t_2),v_2(t_1t_2)) \leqslant m_2(z) - 3$ ; \\
        $\forall p > z, \; p \nmid \gcd(t_0,t_1,t_2), \text{ and } \vartheta_\eQ(\bm t)=1$
    \end{tabular}   \right).\]Note that for $\bm n \in (\eZ/W_z \eZ)^3$ such that $\gcd(\bm n, W_z) = 1$ and $\bm t \equiv \bm n \Mod{W_z}$, we have 
    \[ g(\bm t) = \mathds{1}\left( \begin{tabular}{l}
           $\forall p \in (2, z], \; \max (v_p(t_0t_2),v_p(t_1t_2)) < m_p(z)$ ; \\
        $\max (v_2(t_0t_2),v_2(t_1t_2)) \leqslant m_2(z) - 3$ ; \\
        $ \gcd(t_0,t_1,t_2) = 1, \text{ and } \vartheta_\eQ(\bm t)=1$
    \end{tabular}   \right). \]Lemma \ref{sieg walf} provides a non-trivial upper bound for the quantity $E(bB^d, W_z)$ defined by (\ref{E}), relatively to $g$, with \[ \tag{6.10} \label{func omega} \omega(\bm y) := \prod_{0 \leqslant i \leqslant 2} {1 \over \sqrt{\log(y_i)}} \quad \quad (y_i  > 1)\]and $\rho$ as in (\ref{rho}). Until the end of this section, we take
    \[ z = \sqrt{ {1 \over 14} \log_2 B}. \tag{6.11} \label{val z}\]

\begin{cor}\label{bound E}
 If $z$ is as in (\ref{val z}) and if $\rho$, $g$ and $\omega$ are respectively defined by $($\ref{rho}$)$, $($\ref{g}$)$ and $($\ref{func omega}$)$, then, when $B$ goes to $+\infty$, we have
    \[ {  E(bB^d, W_z) W_z^3 \over B^{3d}} \ll {(\log_3B)^{3/2} \over  \log_2 B (\log B)^{3/2}}.\]
\end{cor}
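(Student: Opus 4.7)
The plan is to extend Lemma \ref{sieg walf} from cubes $[1,B']^3$ to general rectangular boxes $\prod_{i=0}^2 [1,x_i]$, and then take the supremum over $\bm x \in [1, bB^d]^3$ to control $E(bB^d, W_z)$.

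First, I would revisit Sections \ref{section 4} and \ref{section 5} and observe that all the intermediate statements, notably Corollary \ref{estimation Nbms(X)}, are already stated for arbitrary $\bm X \in [2, +\infty)^3$; only Proposition \ref{prop3.14} and Lemma \ref{sieg walf} specialise to the cubic case. Running through the proof of Proposition \ref{prop3.14} with $(x_0,x_1,x_2)$ in place of $(B,B,B)$, the only modification is that each factor $1/\sqrt{\log(B/(b_i^2 m_{ij}m_{ik}))}$ is replaced by $1/\sqrt{\log(x_i/(b_i^2 m_{ij}m_{ik}))}$; after absorbing the $b_i, m_{ij} \leqslant (\log \min x_i)^3$ one obtains the same main term with $\log B$ replaced by $\log x_i$ in the $i$-th factor. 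The local computations of Section \ref{section 5} are unchanged, so the rectangular analogue of Lemma \ref{sieg walf} reads
\[ \sum_{\substack{\bm m \in \prod_i \eZ_{>0} \cap [1,x_i]\\ \bm m \equiv \bm a \Mod{W_z}}} \!\!\!\!\!\!\!\!g(\bm m) = \rho(\bm a, W_z) \frac{x_0 x_1 x_2}{\prod_i \sqrt{\log x_i}} + O\!\left( \frac{(\log_3 \max x_i)^{3/2}\, x_0 x_1 x_2}{W_z^3\,\log_2(\min x_i)\,(\log \min x_i)^{3/2}} \right), \]
provided $\min x_i$ is large enough that $W_z \leqslant (\log \min x_i)^{1/7}$.

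Second, I would compare the main term with $\rho(\bm a,W_z)\int_{\prod[1,x_i]} \omega$. A single integration by parts gives $\int_1^{x_i} dt/\sqrt{\log t} = x_i/\sqrt{\log x_i} + O(x_i/(\log x_i)^{3/2})$ for $x_i \geqslant 2$, so the integral equals $\prod_i x_i/\sqrt{\log x_i}$ up to an error of order $\prod_i x_i/(\log \min x_i)^{5/2}$. After multiplication by $\rho(\bm a, W_z) \ll W_z^{-3}$, this error is of smaller order than the one displayed above, since $(\log \min x_i)^{-1} \ll (\log_2 \min x_i)^{-1}(\log_3 \max x_i)^{-3/2}$ for $x_i$ sufficiently large. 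Third, I would handle uniformity over $\bm x$: when $\min x_i < \exp((\log B)^{1/6})$, both terms in $E$ are trivially bounded by $O((bB^d)^2 \exp((\log B)^{1/6})/W_z^3)$, which is well within the claimed bound; otherwise $\log \min x_i \gg (\log B)^{1/6} \geqslant 14 z^2$ so the hypothesis $W_z \leqslant (\log \min x_i)^{1/7}$ holds, and substituting $\max x_i \leqslant bB^d$, $\log(bB^d) \asymp \log B$, $\log_2(bB^d) = \log_2 B + O(1)$ yields exactly
\[ E(bB^d, W_z) \ll \frac{(\log_3 B)^{3/2}\, B^{3d}}{W_z^3 \log_2 B\, (\log B)^{3/2}}. \]

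The main obstacle is verifying that the "balanced" hypothesis $\min X_i \geqslant (\max X_i)^\eta$ required by Lemma \ref{lemme TN} (and inherited by Proposition \ref{calcul de MbmX}) is actually satisfied for all $\bm x$ in the supremum; the cases where the coordinates differ by more than a polynomial factor must be dissected further and handled by a trivial large-sieve estimate in the spirit of Lemma \ref{applic crible}, which remains of admissible size.
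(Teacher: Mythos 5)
Your overall strategy is the right one and is essentially what the paper leaves implicit (it states the corollary without proof, appealing to the rectangular extension of Lemma \ref{sieg walf}). The steps you outline — extending Proposition \ref{prop3.14} to boxes, comparing $\prod_i x_i/\sqrt{\log x_i}$ with $\int\omega$, and bounding $\rho\ll(\log z)^{3/2}W_z^{-3}$ — are all sound.

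There is, however, one spot where your argument is asserted rather than carried out, and the place where you put the cut does not actually cover the problematic cases. Your dichotomy at $\min x_i < \exp((\log B)^{1/6})$ is tailored to the requirement $W_z\leqslant(\log\min x_i)^{1/7}$, but it does not address the balance hypothesis $\min X_i\geqslant(\max X_i)^\eta$ of Lemma \ref{lemme TN}: with, say, $\min x_i=\exp((\log B)^{1/3})$ and $\max x_i\asymp B^d$ you are above your threshold yet far below any $(\max x_i)^\eta$. You flag this in your final paragraph but only say it ``must be dissected further and handled by a trivial large-sieve estimate,'' without checking that the trivial bound is admissible. It is, but the cleaner way to see it is to split instead at a \emph{polynomial} threshold, say $\min x_i<(bB^d)^{\eta/2}$. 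In that range $x_0x_1x_2/B^{3d}\ll B^{-\eta d/2+o(1)}$ is polynomially small in $B$, and since both $\sum_{\bm m\leqslant\bm x,\;\bm m\equiv\bm a(W_z)}g(\bm m)$ (by Lemma \ref{applic crible}) and $\rho(\bm a,W_z)\int\omega$ are $\ll x_0x_1x_2(\log z)^{3/2}/W_z^3$, the contribution to $E(bB^d,W_z)W_z^3/B^{3d}$ is $\ll B^{-\eta d/2+o(1)}$, well inside the target. In the complementary range $\min x_i\geqslant(bB^d)^{\eta/2}$, \emph{both} $\log\min x_i\geqslant(\eta/2)\log\max x_i$ (so Lemma \ref{lemme TN} applies with parameter $\eta/2$) \emph{and} $W_z\leqslant(\log B)^{1/7}\ll(\log\min x_i)^{1/6}$ hold, and your rectangular version of Lemma \ref{sieg walf} together with $\log\min x_i\asymp\log B$ and $\log_2(bB^d)=\log_2B+O(1)$ gives exactly the stated bound. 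With this single polynomial cut replacing your iterated-log cut, the proof is complete.
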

We are now in a position to apply theorem \ref{theoreme kevin}.

\begin{cor}\label{estim Ntilde}
   Recall that the quantity $\widetilde{N}_{\bm \epsilon,z}(\pi_{\bm F},B,\mathcal{K})$ is defined by $($\ref{N tilde K}$)$. For $B \geqslant 2$ and $z$ as in (\ref{val z}), we have
    \begin{align*}
       { \widetilde{N}_{\bm \epsilon, z}(\pi_{\bm F},B,\mathcal{K}) \over B^{n+1}} = & \left(\int_{\substack{{\bm \nu} \in \mathcal{K} \\ \min \epsilon_j F_j({\bm \nu}) > B^{-d}}} \omega(B^d \epsilon_0 F_0({\bm \nu}), B^d \epsilon_1 F_1({\bm \nu}), B^d \epsilon_2  F_2({\bm \nu})) \mathrm{d} \bm \nu \right) \\
       & \times \left( \sum_{{\bm r}\in (\eZ/W_z \eZ)^{n+1} } {\rho( (\epsilon_0 F_0({\bm r}),\epsilon_1 F_1({\bm r}), \epsilon_2 F_2({\bm r}) ), W_z) \over W_z^{n-2}} \right) \\
        & + O\left( {1 \over (\log B)^{3/2}} \left( B^{-\delta} + \widetilde{\varepsilon}(z) + z^{-c} \right) \right) +  O\left( {(\log_3B)^{3/2} \over  \log_2 B (\log B)^{3/2}}\right),
    \end{align*}  
    where $\rho$ and $\omega$ are respectively defined by $($\ref{rho}$)$ and $($\ref{func omega}$)$.
\end{cor}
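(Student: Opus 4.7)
The plan is to recognise $\widetilde{N}_{\bm \epsilon, z}(\pi_{\bm F}, B, \mathcal{K})$ as being exactly of the shape of the left-hand side of Theorem \ref{theoreme kevin}, applied to the arithmetic function $g$ defined in (\ref{g}) and the weight $\omega$ defined in (\ref{func omega}). Indeed, the conditions defining the set $\mathcal{E}_{\bm \epsilon, z}(\pi_{\bm F})$ translate one-to-one into the definition of $g$ evaluated at the triple $(\epsilon_0 F_0(\bm k), \epsilon_1 F_1(\bm k), \epsilon_2 F_2(\bm k))$: the valuation restrictions at primes $p \leqslant z$ are built into $g$, the coprimality $\gcd(F_0(\bm k), F_1(\bm k), F_2(\bm k)) = 1$ is equivalent to $p \nmid \gcd(t_0, t_1, t_2)$ for all $p > z$ combined with the $p \leqslant z$ valuation conditions, and the $\eQ$-solubility matches $\vartheta_\eQ(\bm t) = 1$. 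Hence
\[ \widetilde{N}_{\bm \epsilon, z}(\pi_{\bm F}, B, \mathcal{K}) = \sum_{\substack{\bm k \in \eZ^{n+1} \cap B\mathcal{K} \\ \min_j \epsilon_j F_j(\bm k) > 0}} g(\epsilon_0 F_0(\bm k), \epsilon_1 F_1(\bm k), \epsilon_2 F_2(\bm k)). \]

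Next I verify the hypothesis of Theorem \ref{theoreme kevin}, namely an equidistribution estimate of $g$ in arithmetic progressions of modulus $W_z$ of the form $\rho(\bm a, W_z) \int \omega$. This is exactly what Lemma \ref{sieg walf} provides: for $\bm n$ satisfying the valuation conditions and $\gcd(\bm n, W_z) = 1$ we have the asymptotic with density $\rho$ as in (\ref{rho}), and for $\bm n$ violating them the sum $\sum g$ is trivially zero, since $g$ vanishes on such residue classes. A dyadic/partial summation argument in each variable converts the single-point estimate of Lemma \ref{sieg walf} into the sup-norm estimate over $\bm x \in [1, B]^3$ required by the definition (\ref{E}), the point being that the main term $B^3/(\log B)^{3/2}$ matches the shape of $\int \omega$ thanks to the elementary asymptotic $\int_1^x dt/\sqrt{\log t} \sim x/\sqrt{\log x}$. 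This produces precisely the bound of Corollary \ref{bound E}:
\[ \frac{E(bB^d, W_z) W_z^3}{B^{3d}} \ll \frac{(\log_3 B)^{3/2}}{\log_2 B \, (\log B)^{3/2}}. \]

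Finally, I apply Theorem \ref{theoreme kevin} directly. The main term delivers the stated expression involving $\rho$ and $\omega$ verbatim. For the error term I need the $L^1$-bound $\|g\|_1 \ll B^{3d}/(\log B)^{3/2}$, which follows from $g \leqslant \mathds{1}(\vartheta_\eQ = 1)$ and the classical bound of Serre, or equivalently by summing the main term of Lemma \ref{sieg walf} over admissible residues $\bm n$ modulo $W_z$. Together with Corollary \ref{bound E} this converts the error term of Theorem \ref{theoreme kevin} into
\[ \frac{\|g\|_1}{B^{3d}}\bigl(B^{-\delta} + \widetilde{\varepsilon}(z) + z^{-c}\bigr) + \frac{E(bB^d, W_z) W_z^3}{B^{3d}} \ll \frac{B^{-\delta} + \widetilde{\varepsilon}(z) + z^{-c}}{(\log B)^{3/2}} + \frac{(\log_3 B)^{3/2}}{\log_2 B \, (\log B)^{3/2}}, \]
which is exactly the error in the statement.

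The main obstacle is purely a bookkeeping one: ensuring that Lemma \ref{sieg walf}, which is stated with a single upper bound $B$ and for one residue class $\bm n$, really supplies the uniform estimate (over $\bm x \in [1, B]^3$ and over all admissible $\bm a$) that Theorem \ref{theoreme kevin} demands via (\ref{E}). The uniformity in $\bm a$ is immediate from Theorem \ref{problème de comptage} underlying Lemma \ref{sieg walf}, whose error term is uniform in $\bm n$; uniformity in $\bm x$ comes from applying Lemma \ref{sieg walf} to the (sub-)box $[1, x_0] \times [1, x_1] \times [1, x_2]$ and noting that each $x_i \geqslant W_z^7$ is enforceable on the dominant range, the complementary range contributing negligibly in view of Lemma \ref{applic crible}.
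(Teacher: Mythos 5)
Your proposal takes exactly the same route as the paper: the paper's proof of Corollary~\ref{estim Ntilde} is literally a one-sentence application of Theorem~\ref{theoreme kevin} to the function $g$ of (\ref{g}), together with Lemma~\ref{sieg walf}, Corollary~\ref{bound E}, and the bound $\|g\|_1 \ll B^{3d}/(\log B)^{3/2}$ (which the paper sources from \cite[theorem 1.1]{den} rather than the Serre bound, but these are interchangeable). You also unpack how Corollary~\ref{bound E} follows from Lemma~\ref{sieg walf}, which the paper leaves implicit.

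One remark of caution on a side assertion: you claim that on residue classes $\bm n$ with $\gcd(\bm n, W_z) > 1$ the function $g$ vanishes identically; this is not quite true. If a prime $p \in (2,z]$ satisfies $v_p(n_0) = v_p(n_1) = v_p(n_2) = 1$, then any lift $\bm t \equiv \bm n \Mod{W_z}$ has $v_p(t_0t_2) = v_p(t_1t_2) = 2 < m_p(z)$, so the valuation constraints in $g$ are satisfied even though $p \mid \gcd(t_0,t_1,t_2)$. The reconciliation between $\widetilde{N}_{\bm \epsilon, z}$ (which imposes $\gcd(F_0,F_1,F_2)=1$) and $\sum g$ (which only bars $p \nmid \gcd$ for $p > z$) is therefore not via pointwise vanishing of $g$; this matching is implicitly absorbed into the framework of Theorem~\ref{theoreme kevin} and is not something the paper's terse proof spells out either. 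It does not change your overall argument, but the specific justification offered there is inaccurate.
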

\begin{proof}
    This is a direct application of theorem \ref{theoreme kevin} to $g$ defined by (\ref{g}), combined with Lemma \ref{sieg walf} and Corollary \ref{bound E}. Note that \cite[th. 1.1]{LRS} ensures that 
    \[ ||g||_1 \ll {B^{3d} \over (\log B)^{3/2}}, \]so that the error term coming from theorem \ref{theoreme kevin} is admissible.
\end{proof}

\paragraph{End of the proof of Theorem \ref{THA}.} We take $z$ as in (\ref{val z}), $\omega$ as in~(\ref{func omega}) and $\rho$ as in (\ref{rho}). It remains to calculate the main term obtained in Corollary~\ref{estim Ntilde}, to sum over~$\mathcal{K}_i$ defined in Remark \ref{partition boîte} and over $\bm \epsilon \in \{-1,1\}^3$.

\begin{lemme}\label{int}
    In the setting of theorem \ref{theoreme kevin}, for $\omega $ as in $($\ref{func omega}$)$, we have the estimate \begin{align*}\underset{\substack{{\bm \nu} \in \mathcal{K} \\ \min \epsilon_j F_j({\bm \nu}) > B^{-d}}}{\int} \!\!\!\!\!\!\!\!\!\!\!\!\omega(B^d \epsilon_0 F_0({\bm \nu}), B^d\epsilon_1  F_1({\bm \nu}), B^d \epsilon_2 F_2({\bm \nu})) \mathrm{d} \bm \nu  = { {\rm vol}\left({\bm \nu} \in \mathcal{K} : \min \epsilon_j F_j(\bm \nu) \geqslant 0\right) \over (d \log B)^{3/2}} \left( 1 + O\left( {1 \over \log B} \right) \right),\end{align*} when $B$ goes to $+ \infty$.
\end{lemme}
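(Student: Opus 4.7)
\textbf{Proof plan for Lemma \ref{int}.} The starting observation is the identity $\log(B^d \epsilon_j F_j(\bm\nu)) = d\log B + \log(\epsilon_j F_j(\bm\nu))$, which allows us to factor the main $(d\log B)^{-3/2}$-term out of the integrand. Setting $u_j(\bm\nu) := \log(\epsilon_j F_j(\bm\nu))/(d\log B)$ and $V := \mathrm{vol}\{\bm\nu \in \mathcal{K} : \min_j \epsilon_j F_j(\bm\nu) \geqslant 0\}$, the lemma reduces to showing
\[ J(B) := \int_{\{\bm\nu \in \mathcal{K} : \min_j \epsilon_j F_j(\bm\nu) > B^{-d}\}} \prod_{j=0}^{2} (1 + u_j(\bm\nu))^{-1/2} \mathrm{d} \bm\nu = V + O\bigl(1/\log B\bigr), \]
since $V = O(1)$. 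Note that on the domain we have $u_j \in (-1, O(1/\log B))$.

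The strategy is to write $(1+u_j)^{-1/2} = 1 + \phi(u_j)$ with $\phi(u) := (1+u)^{-1/2} - 1$ and expand
\[ \prod_{j=0}^{2}(1+\phi(u_j)) = 1 + \sum_j \phi(u_j) + \sum_{i<j}\phi(u_i)\phi(u_j) + \phi(u_0)\phi(u_1)\phi(u_2). \]
The leading $1$ integrates to $\mathrm{vol}\{\bm\nu \in \mathcal{K} : \min_j\epsilon_j F_j > B^{-d}\}$, which equals $V - O(B^{-d})$ by the tube-volume estimate $\mathrm{vol}\{0 \leqslant \epsilon_jF_j \leqslant B^{-d}\} = O(B^{-d})$ that follows from the smoothness of each hypersurface $V_j = (F_j = 0)$ (one of the hypotheses of Theorem \ref{théorème principal}). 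It then remains to control each of the other terms.

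The heart of the argument is to bound each $\int \phi(u_j) \mathrm{d}\bm\nu$ by $O(1/\log B)$. I split the domain into the ``bulk'' region $\{\epsilon_j F_j \geqslant B^{-d/2}\}$, where $u_j \in [-1/2, O(1/\log B)]$ and Taylor expansion gives $|\phi(u_j)| \ll |u_j| = |\log(\epsilon_j F_j)|/(d\log B)$; the bound $O(1/\log B)$ then follows from the absolute convergence of $\int_{\mathcal{K}} |\log F_j(\bm\nu)| \mathrm{d}\bm\nu$, itself obtained via the $O(\epsilon)$ tube volume of $\{|F_j| < \epsilon\}$ and Stieltjes integration by parts. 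On the ``singular'' region $\{B^{-d} < \epsilon_j F_j < B^{-d/2}\}$, where $\phi(u_j)$ can blow up, I use the change of variables $s = d\log B + \log t$ in the one-dimensional integral $\int (1+u_j)^{-1/2} \mathrm{d}\mu_j(t)$ (with $\mu_j(t) = \mathrm{vol}\{\epsilon_j F_j \leqslant t\}$, and $\mu_j'(t) = O(1)$ by smoothness): the integral reduces to $B^{-d} \int_0^{(d/2)\log B} e^s/\sqrt{s}\, \mathrm{d}s \ll B^{-d/2}/\sqrt{\log B}$, hence a contribution of $O(B^{-d/2})$, entirely negligible. The cross terms $\int \phi(u_i)\phi(u_j)$ and $\int \phi(u_0)\phi(u_1)\phi(u_2)$ are then dispatched by Cauchy--Schwarz together with the analogous bound $\int \phi(u_j)^2 \mathrm{d}\bm\nu = O(1/\log^2 B)$, giving contributions of order $O(1/\log^2 B)$ at worst.

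\textbf{Main obstacle.} The only delicate point is the singular region where some $\epsilon_j F_j$ is close to $B^{-d}$ and the integrand $(1+u_j)^{-1/2}$ tends to infinity. What saves us is that the smoothness of each $V_j$ forces the tube-volume estimate $\mu_j(t) = O(t)$ for small $t$, which, combined with the change of variables $s = d\log B + \log t$, exactly compensates for the singularity of the integrand and yields an exponentially small ($O(B^{-d/2})$) contribution. Everywhere else, the proof is bookkeeping of the Taylor expansion of $(1+u)^{-1/2}$ around $u = 0$.
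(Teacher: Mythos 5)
Your decomposition $(1+u_j)^{-1/2} = 1 + \phi(u_j)$ and the bulk/singular split for the first-order terms $\int \phi(u_j)\,\mathrm{d}\bm\nu$ are sound, and you are right that the singular region near $\{\epsilon_j F_j = B^{-d}\}$ is where the care is needed (the paper's own proof is cavalier here: the pointwise estimate $(1+\log(\epsilon_i F_i)/(d\log B))^{-1/2} = 1+O(1/\log B)$ it invokes simply fails when $\epsilon_i F_i$ is of size $B^{-d}$, so the paper is implicitly relying on the same tube-volume control you make explicit). However, the Cauchy--Schwarz step for the cross terms contains a genuine gap: the claimed bound $\int \phi(u_j)^2\,\mathrm{d}\bm\nu = O(1/\log^2 B)$ is false, and in fact this integral is \emph{divergent}. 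Near the boundary, $\phi(u_j)^2 \sim (1+u_j)^{-1} = d\log B/\log(B^d\epsilon_j F_j)$; writing $\epsilon_j F_j = B^{-d}(1+w)$ with $w>0$ small, the integrand behaves like $d\log B/w$, while the pushforward density of $\epsilon_j F_j$ is bounded away from zero near $t=B^{-d}$ (by smoothness of $V_j$, once $V_j$ meets $\mathcal{K}$), so one is left facing $c\,d\log B \cdot B^{-d}\int_0^{w_0} w^{-1}\,\mathrm{d}w = +\infty$. You simply cannot square $\phi$: the $-1/2$ power was exactly at the threshold of integrability and squaring pushes it over.

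The fix is to bound $\int \phi(u_i)\phi(u_j)\,\mathrm{d}\bm\nu$ directly, splitting \emph{each} of the two variables into its bulk and singular ranges. On the doubly-bulk region, Taylor expansion gives $|\phi(u_i)\phi(u_j)| \ll |\log(\epsilon_i F_i)\log(\epsilon_j F_j)|/(\log B)^2$, and (now legitimate) Cauchy--Schwarz needs only $\log|F_i| \in L^2(\mathcal{K})$, which follows from the tube estimate exactly as your $L^1$ bound does. On the mixed regions one factor is bounded and your one-dimensional $O(B^{-d/2})$ estimate applies to the other. On the doubly-singular region you need the two-dimensional tube estimate $\mathrm{vol}\{0 < \epsilon_i F_i < s,\ 0 < \epsilon_j F_j < t\} = O(st)$, which is where the complete-intersection/transversality hypothesis of Theorem \ref{théorème principal} enters; this yields $O(B^{-d})$. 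The triple product is handled the same way with the three-fold tube estimate. Once this is in place the rest of your argument goes through.
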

\begin{proof}
    We write 
    \begin{align*} \underset{\substack{{\bm \nu} \in \mathcal{K} \\ \min \epsilon_j F_j({\bm \nu}) > B^{-d}}}{\int} \!\!\!\!\!\!\!\!\!\!\!\omega(B^d \epsilon_0 F_0({\bm \nu}), B^d \epsilon_1 F_1({\bm \nu}), B^d \epsilon_2 F_2({\bm \nu})) \mathrm{d} \bm \nu & = {1 \over (d\log B)^{3/2}}\!\!\!\!\!\!\!\!\!\!\underset{\substack{{\bm \nu} \in \mathcal{K} \\ \min \epsilon_j F_j({\bm \nu}) > B^{-d}}}{\int} \!\!\!\!\!\! \prod_{0 \leqslant i \leqslant 2}{1 \over \sqrt{ 1+ {\log(\epsilon_i F_i({\bm \nu})) \over d \log(B)}} } \mathrm{d} \bm \nu.\end{align*}Note that for $B$ large enough and $\bm \nu \in \mathcal{K}$ such that $\min_j F_j(\bm \nu)>B^{-d}$ we have $\log(\epsilon_i F_i(\bm \nu)) / \log B \in (0,1)$. Thus, we have
    \[ {1 \over \sqrt{ 1+ {\log (\epsilon_i F_i({\bm \nu})) \over \log (B^d)} }} = 1 + O \left( {1 \over \log B} \right) \]so that
    \begin{align*} \underset{\substack{{\bm \nu} \in \mathcal{K} \\ \min \epsilon_j F_j({\bm \nu}) > B^{-d}}}{\int} \!\!\!\!\!\!\!\!\!\!\!\!\!\!\!\!\! \omega(B^d \epsilon_0 F_0({\bm \nu}), B^d \epsilon_1 F_1({\bm \nu}), B^d \epsilon_2 F_2({\bm \nu})) \mathrm{d} \bm \nu = & {\text{vol}\left({\bm \nu} \in \mathcal{K} :  \min \epsilon_j F_j(\bm \nu) > B^{-d} \right) \over (d\log B)^{3/2}  }\!\!\left(1  + O \left( {1 \over \log B}\right)\right) . \end{align*}From \cite[lemma 1.19]{Browning}, it follows that 
    \[ \text{vol}\left({\bm \nu} \in \mathcal{K} : \min \epsilon_j F_j(\bm \nu) > B^{-d} \right) = \text{vol}\left({\bm \nu} \in \mathcal{K} : \min \epsilon_j F_j(\bm \nu)\geqslant  0 \right) \left( 1 + O \left( {1 \over \log B} \right) \right),\]which concludes the proof of Lemma \ref{int}.
\end{proof}

Finally, we have the following result coming from the definition of $\rho$ in Lemma \ref{sieg walf}.

\begin{lemme}\label{sum t}We have 
\[ \lim_{z \to + \infty } { 1 \over W_z^{n-2}} \underset{{\bm r}\in (\eZ/W_z \eZ)^{n+1} }{\sum} \rho( (\epsilon_0 F_0({\bm r}), \epsilon_1 F_1({\bm r}),  \epsilon_2 F_2({\bm r}) ), W_z)=
      {2 \over \pi^{3 /2}}\prod_p \left( 1 - {1 \over p} \right)^{-3/2} c_p, 
    \]where $c_p$ is defined by $($\ref{cp1}$)$.
\end{lemme}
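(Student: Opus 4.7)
The plan is to substitute the explicit form of $\rho$ from $(\ref{rho})$ into the sum and factor it over primes via the Chinese Remainder Theorem. Since by construction $\rho(\bm n, W_z)$ depends only on $\bm n$ modulo $W_z$, and by Remark \ref{rmq cruciale} each $\vartheta_p(\bm n)$ depends only on $\bm n$ modulo $p^{m_p(z)}$ under the valuation cutoffs built into the definition, one can rewrite
\[
\frac{1}{W_z^{n-2}} \sum_{\bm r \in (\eZ/W_z\eZ)^{n+1}} \rho\bigl((\epsilon_i F_i(\bm r)), W_z\bigr) = \frac{2}{\pi^{3/2}} \prod_{p \leqslant z} \left(1-\frac{1}{p}\right)^{-3/2} I_p(z, \bm \epsilon),
\]
where, using $W_z^{n+1} = \prod_{p \leqslant z} p^{m_p(z)(n+1)}$ together with CRT,
\[
I_p(z, \bm \epsilon) := \frac{1}{p^{m_p(z)(n+1)}} \sum_{\bm r_p \in (\eZ/p^{m_p(z)}\eZ)^{n+1}} \vartheta_p\bigl((\epsilon_i F_i(\bm r_p))\bigr).
\]

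Next, I would let $z \to + \infty$ at each prime separately. For a fixed $p$, the quantity $I_p(z, \bm \epsilon)$ is a finite $p$-adic Riemann sum for the integral
\[
J_p(\bm \epsilon) := \int_{\eZ_p^{n+1}} \vartheta_p\bigl(\epsilon_0 F_0(\bm y), \epsilon_1 F_1(\bm y), \epsilon_2 F_2(\bm y)\bigr) \, d\mu_p(\bm y).
\]
Since $m_p(z) = \lceil z \rceil + 2\mathds{1}_{p=2} \to + \infty$ as $z \to + \infty$, and since $\vartheta_p$ is locally constant on a set of full Haar measure in $\eQ_p^3$ (the complement consists of triples with infinite $p$-adic valuations, which has measure zero), the convergence $I_p(z, \bm \epsilon) \to J_p(\bm \epsilon)$ follows without further work.

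The main obstacle is to identify $J_p(\bm \epsilon)$ with $c_p$ as defined in $(\ref{1.3.3})$, independently of the sign vector $\bm \epsilon \in \{-1,1\}^3$. The function $\vartheta_p$ depends on its arguments only modulo squares of $\eQ_p^{\ast}$, and multiplication by $\epsilon_i = -1$ is a measurable but not square-trivial operation in general. One shows that the pushforward of the Haar measure on $\eZ_p^{n+1}$ by $(F_0, F_1, F_2)$ is symmetric under componentwise sign changes on the target: this follows from the Birch/smoothness hypotheses on the $F_i$, a change of variables argument leveraging that $\bm y \mapsto -\bm y$ is $\mu_p$-preserving on $\eZ_p^{n+1}$, and a local computation with Hilbert symbols exploiting the fact that the bad locus $\{F_0 F_1 F_2 = 0\}$ has measure zero (\textit{cf.} Remark \ref{rmq ouverts}).

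Finally, to exchange the limit $z \to + \infty$ with the infinite product over primes, I would invoke the absolute convergence of $\prod_p (1-1/p)^{-3/2} c_p$ noted in the remark following Proposition \ref{conj constante} (which ultimately rests on \cite[th. 3.6]{den}). A dominated-convergence argument over the counting measure on primes, together with the pointwise convergence $(1-1/p)^{-3/2} I_p(z, \bm \epsilon) \to (1-1/p)^{-3/2} c_p$, then yields the identity asserted by the Lemma.
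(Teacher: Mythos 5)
The overall skeleton of your argument — substitute the explicit expression for $\rho$, factor the sum over residues by the Chinese Remainder Theorem, pass each local factor to a $p$-adic integral, then exchange the limit and the infinite product — is the natural way to proceed, and indeed the paper offers no proof at all, treating the lemma as an immediate consequence of Lemma~\ref{sieg walf}. So your level of detail already exceeds the paper's. The CRT factorisation, the observation that $I_p(z,\bm\epsilon)$ is a Riemann sum converging to a $p$-adic integral as $m_p(z)\to\infty$, and the dominated-convergence step to swap the limit with the product are all sound in outline (modulo the book-keeping that $\rho(\bm n,W_z)$ is taken to vanish when the coprimality or valuation constraints of Remark~\ref{rmq cruciale} fail, which you should state explicitly).

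The genuine gap is exactly the point you flag but do not actually close: identifying $J_p(\bm\epsilon)$ with $c_p$. Your proposed mechanism does not work. The change of variables $\bm y\mapsto-\bm y$ sends $F_i(\bm y)$ to $(-1)^d F_i(\bm y)$, so it flips \emph{all three} signs simultaneously (or none, if $d$ is even); it cannot realise an independent sign change on a single coordinate of $(F_0,F_1,F_2)$. Likewise, the fact that $\{F_0F_1F_2=0\}$ has $\mu_p$-measure zero only lets you ignore that locus; it says nothing about the distribution of $\vartheta_p$ on its complement. To see why the point is delicate, write $\vartheta_p(\epsilon_0 t_0,\epsilon_1 t_1,\epsilon_2 t_2)=\mathds 1\bigl((\epsilon_0\epsilon_2\, t_0t_2,\ \epsilon_1\epsilon_2\, t_1t_2)_p=1\bigr)$. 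For $p\equiv 1\pmod 4$ the factor $-1$ is a square in $\eQ_p$, so the sign vector is invisible and $J_p(\bm\epsilon)=c_p$ trivially. But for $p\equiv 3\pmod 4$ and for $p=2$, $-1\notin(\eQ_p^\ast)^2$, so $\vartheta_p(\epsilon_0 F_0,\epsilon_1 F_1,\epsilon_2 F_2)$ and $\vartheta_p(F_0,F_1,F_2)$ genuinely differ as functions (the discrepancy is governed by Hilbert symbols of the form $(F_iF_j,-1)_p$, which are nonconstant on $\eZ_p^{n+1}$). The asserted equality of the two $p$-adic densities is therefore a nontrivial statement about the pushforward measure $(F_0,F_1,F_2)_\ast\mu_p$ which your sketch does not establish; no elementary measure-preserving involution of $\eZ_p^{n+1}$ realises the required sign flip when the $F_i$ are not independent coordinate functions. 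This step needs an actual computation (or a reduction to the equidistribution properties of a Birch system in residue classes), and as written it is the missing content of the whole lemma.

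A secondary, smaller point: you assert convergence of the Riemann sums $I_p(z,\bm\epsilon)\to J_p(\bm\epsilon)$ "without further work", but you should at least record that the valuation and gcd cutoffs built into $\rho$ remove a set whose measure tends to $0$ as $m_p(z)\to\infty$ (by Lang--Weil, as in Remark~\ref{rmq ouverts}), so that the indicator actually converges in $L^1(\mu_p)$.
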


Combining Lemma \ref{lien N' et N tilde} with Corollary \ref{estim Ntilde} and Lemma \ref{int}, we obtain that there exist $\delta, c > 0$ such that for $z$ as in (\ref{val z}),
\begin{align*} N_{\bm \epsilon}'(\pi_{\bm F} , B) =&  {\text{vol} \left({\bm \nu} \in [-1,1]^{n+1} : \min \epsilon_j F_j(\bm \nu)\geqslant  0 \right) \over d^{3/2}}\left( \sum_{{\bm r}\in (\eZ/W_z \eZ)^{n+1} } {\rho( (\epsilon_0 F_0({\bm r}),\epsilon_1 F_1({\bm r}), \epsilon_2 F_2({\bm r}) ), W_z) \over W_z^{n-2}} \right) \\
& \times {B^{n+1} \over (\log B)^{3/2} } + O \left( {B^{n+1} \over (\log B)^{3/2} } \left(B^{-\delta}+ z^{-c} + \widetilde{\varepsilon}(z) + {1 \over z \log z} + \varepsilon(z) + {(\log_3 B)^{3/2} \over \log_2(B) } \right)\right) .
\end{align*}

Replacing each $N_{\bm \epsilon}'(\pi_{\bm F} , B)$ by this expression in (\ref{inv mobius}) and then applying Lemma \ref{sum t} with $z = \sqrt{ {1 \over 14} \log_2 B} $ yields 
\[ N_{\text{loc}}(\pi_{\bm F} , B^{n+1}) \underset{B \to + \infty}{\sim} {2 \over (d\pi)^{3/2} } c_\infty \left( \prod_p \left( 1 - {1 \over p} \right)^{-3/2} c_p \right) {B^{n+1} \over (\log B)^{3/2} }, \]where we used the partition defined in Remark \ref{partition boîte} and where $c_\infty$ is the constant defined by (\ref{cinfty1}), for which we have
\[  \sum_{ \substack{\boldsymbol{\epsilon} \in \{-1,1 \}^3 \\ \boldsymbol{\epsilon} \neq \pm (1,1,-1)}}\text{vol}\left({\bm \nu} \in [-1,1]^{n+1} : \min_j \epsilon_j F_j(\bm \nu)\geqslant  0 \right) = c_\infty.\]
 
\textbf{Acknowledgement} : I am grateful to Jean-Louis Colliot-Th\'el\`ene and Cyril Demarche for their help with Proposition~\ref{brauer subordonné}. I thank Daniel Loughran for answering some questions about his paper \cite{LS}. I am also grateful to R\'egis de la Bret\`eche for his suggestion concerning Lemma \ref{lemme TN} and for his valuable writing advice. I finally thank Kevin Destagnol for his guidance throughout the writing of this paper. I am grateful to the anonymous reviewer for helpful remarks on an early version of this paper.

\end{document}